\declaretheoremstyle[bodyfont=\sl]{slanted}
\declaretheorem[name=Definition,style=definition,qed=$\dashv$,
numberwithin=section]{dfn}
\declaretheorem[name=Definition,style=definition,numbered=no,qed=$\dashv$]{dfn*}
\declaretheorem[name=Definition,style=definition,numbered=no]{dfnnoqed*}
\declaretheorem[name=Theorem,style=slanted,sibling=dfn]{tm}
\declaretheorem[name=Theorem,style=slanted,numbered=no]{tm*}
\declaretheorem[name=Lemma,style=slanted,sibling=dfn]{lem}
\declaretheorem[name=Corollary,style=slanted,numbered=no]{cor*}
\declaretheorem[name=Remark,style=definition,sibling=dfn]{rem}
\declaretheoremstyle[headfont=\scshape]{claimstyle}
\declaretheorem[name=Claim,style=claimstyle]{clm}
\declaretheorem[name=Claim,style=claimstyle,numbered=no]{clm*}
\declaretheorem[name=Subclaim,style=claimstyle,numbered=no]{sclm*}
\declaretheorem[name=Subsubclaim,style=claimstyle,numberwithin=sclmtwo]{ssclmtwo
}
\declaretheorem[name=Subsubclaim,style=claimstyle,numberwithin=sclmthree]{
ssclmthree}
\declaretheorem[name=Subsubclaim,style=claimstyle,numberwithin=sclmfour]{
ssclmfour}
\declaretheorem[name=Subsubclaim,style=claimstyle,numberwithin=sclmfive]{
ssclmfive}
\declaretheorem[name=Subsubclaim,style=claimstyle,numberwithin=sclmsix]{ssclmsix
}
\declaretheorem[name=Subsubclaim,style=claimstyle,numberwithin=sclmseven]{
ssclmseven}
\declaretheorem[name=Subsubclaim,style=claimstyle,numberwithin=sclmeight]{
ssclmeight}
\declaretheorem[name=Subsubclaim,style=claimstyle,numberwithin=sclmnine]{
ssclmnine}
\declaretheorem[name=Subsubclaim,style=claimstyle,numberwithin=sclmten]{ssclmten
}
\declaretheorem[name=Subsubclaim,style=claimstyle,numbered=no]{ssclm*}
\declaretheoremstyle[headfont=\scshape]{casestyle}
\declaretheorem[name=Case,style=casestyle]{case}
\declaretheorem[name=Case,style=casestyle]{casetwo}
\declaretheorem[name=Case,style=casestyle]{casethree}
\declaretheorem[name=Subcase,style=casestyle,numberwithin=case]{scase}
\declaretheorem[name=Subcase,style=casestyle,numberwithin=casethree]{scasethree}
\declaretheorem[name=Subsubcase,style=casestyle,numberwithin=scasethree]
{sscasethree}
\newcommand{\conv}{\mathrm{conv}}
\newcommand{\compmode}{1}%0 is standard, 1 is ...
\newcommand{\compopt}[2]{\ifthenelse{\equal{\compmode}{0}}{#1}{#2}}
\newcommand{\lgcd}{\mathrm{lgcd}}
\newcommand{\direct}{\mathrm{direct}}
\newcommand{\CC}{\mathbb C}
\newcommand{\sub}{\subseteq}
\newcommand{\inter}{\cap}
\renewcommand{\int}{\inter}
\newcommand{\om}{\omega}
\newcommand{\pow}{\mathcal{P}}
\newcommand{\OR}{\mathrm{OR}}
\newcommand{\cut}{\backslash}
\newcommand{\Tt}{\mathcal{T}}
\newcommand{\Ss}{\mathcal{S}}
\newcommand{\Uu}{\mathcal{U}}
\newcommand{\Vv}{\mathcal{V}}
\newcommand{\Ww}{\mathcal{W}}
\newcommand{\rg}{\mathrm{rg}}
\newcommand{\ins}{\trianglelefteq}
\newcommand{\nins}{\ntrianglelefteq}
\newcommand{\pins}{\triangleleft}
\newcommand{\crit}{\mathrm{cr}}
\newcommand{\rest}{\!\upharpoonright\!}
\newcommand{\com}{\circ}
\newcommand{\lh}{\mathrm{lh}}
\newcommand{\Ult}{\mathrm{Ult}}
\newcommand{\sats}{\models}
\newcommand{\J}{\mathcal{J}}
\newcommand{\ZFC}{\mathsf{ZFC}}
\newcommand{\es}{\mathbb{E}}
\newcommand{\core}{\mathfrak{C}}
\newcommand{\pred}{\mathrm{pred}}
\newcommand{\id}{\mathrm{id}}
\newcommand{\sq}{\mathrm{sq}}
\newcommand{\conc}{\ \widehat{\ }\ }
\newcommand{\rSigma}{\mathrm{r}\Sigma}
\DeclareMathOperator{\card}{card}
\DeclareMathOperator{\cof}{cof}
\newcommand{\cHull}{\mathrm{cHull}}
\newcommand{\unrvl}{\mathrm{unrvl}}
\renewcommand{\cut}{\backslash}
\renewcommand{\Ss}{\mathcal{S}}
\newcommand{\pvec}{\vec{p}}
\renewcommand{\hbar}{\bar{h}}
\newcommand{\udash}{\mathrm{u}\text{-}}
\newcommand{\udeg}{{\udash\deg}}
\newcommand{\uu}{\mathrm{u}}
\renewcommand{\pm}{\mathrm{pm}}
\newcommand{\exit}{\mathrm{ex}}
\newcommand{\Gg}{\mathcal{G}}
\newcommand{\successor}{\mathrm{succ}}
\newcommand{\passive}{\mathrm{pv}}
\newcommand{\dropset}{\mathscr{D}}
\begin{document}

\title{Background construction for $\lambda$-indexed mice}

\author{Farmer Schlutzenberg
\footnote{Partially funded by the Deutsche Forschungsgemeinschaft (DFG, 
German Research Foundation) under Germany's Excellence Strategy EXC
2044-390685587, Mathematics M\"unster: Dynamics--Geometry--Structure.
}\ \footnote{
farmer.schlutzenberg@gmail.com,
\url{\myurl}}\\
WWU M\"unster}

\maketitle

\newcommand{\ext}{\mathrm{ext}}

\begin{abstract}
Let $M$ be a $\lambda$-indexed (that is, Jensen indexed) premouse. We prove that 
$M$ is iterable with respect to standard $\lambda$-iteration rules iff $M$ is 
iterable with respect to a natural version of Mitchell-Steel iteration rules. 
Using this equivalence, we describe a background construction for 
$\lambda$-indexed mice, analogous to traditional background constructions for 
Mitchell-Steel indexed mice, and which absorbs Woodin cardinals from the 
background universe.

We also prove some facts regarding the correspondence between standard iteration 
trees and u-iteration trees on premice with Mitchell-Steel indexing; these facts 
 were stated and used without proof in \cite{iter_for_stacks}.\end{abstract}

\section{Introduction}

There are two standard forms of fine structural premice
commonly used in the inner model theoretic literature:
those with \emph{Mitchell-Steel indexing (MS-indexing, MS-premice)},
and those with \emph{$\lambda$-indexing} or \emph{Jensen indexing (J-indexing, 
J-premice)}.
Let $M$ be an active premouse of either kind,
$F=F^M$ the active extender of $M$,
$\kappa=\crit(F)$ its critical point, $\nu=\nu(F)$
the strict sup of generators, and $\lambda=\lambda(F)=i^M_F(\kappa)$
where $i^M_F:M\to U=\Ult(M,F)$ is the ultrapower map.
We have $\nu\leq\lambda$.
If $M$ is an MS-premouse then
$\OR^M=(\nu^+)^U$, whereas if $M$ is a J-premouse
then $\OR^M=(\lambda^+)^U$.  Also, iteration
trees on these premice are usually formed according to different rules:
let $\Tt$ be a normal tree on $M$. If $M$ is MS-indexed,
this conventionally mean that $\pred^\Tt(\alpha+1)$
is the least $\beta$ such that $\crit(E^\Tt_\alpha)<\nu(E^\Tt_\beta)$,
whereas if $M$ is J-indexed, it conventionally means that
$\pred^\Tt(\alpha+1)$ is the least $\beta$ such that
$\crit(E^\Tt_\alpha)<\lambda(E^\Tt_\beta)$. 
Let us call these two rules \emph{MS-rules}
and \emph{J-rules} respectively. 
If $M$ is J-indexed, it also makes sense to form trees
with MS-rules, although this is not normally done.
(Actually, we will deal with another variant of MS-rules,
as it is what  arises naturally with background construction.)

J-premice are in certain ways easier to deal with.
However, one traditional advantage of MS-premice
is that (when we use the traditional iteration rules respectively)
backgrounded $L[\es]$-constructions for MS-premice absorb all Woodin 
cardinals from the background universe $R$ (assuming $R$ is
sufficient iterability in some larger universe), whereas constructions
for J-premice require stronger large cardinals in $R$ in order
to reach Woodin cardinals.
This is because in order to get iterability with respect
to J-rules, the traditional methods have required
that when an active J-premouse $M$ has $F^M$ induced
by 
background extender $F^*$,  $F^*$ should be at least $\lambda(F^M)$-strong;
this ensures in particular that the usual procedure
for lifting trees on $M$ to trees on $R$ makes sense.

Also of relevance is
Fuchs' translation between
MS-mice and J-mice in \cite{fuchs_translation_1},
and respective iteration strategies for them in \cite{fuchs_translation_2}.
His translation of iteration strategies works, however,
 only with strategies for MS-rules,
and thus does not give the translation between
the standard forms of iterability of the respective forms of mice.

In this note 
we show
that for $k$-sound J-premice,
for example, $(k,\om_1+1)$-iterability
with respect to  J-rules is equivalent 
to that with respect to a natural form of MS-rules
(\emph{lifting-MS-rules}), and in fact,
there is a one-to-one correspondence between
witnessing strategies.  In fact, even without iterability,
there is in  fact a 
direct translation
between iteration trees of the two kinds, given
that they satisfy some slight extra properties (which
follow immediately if they are according to $(k,\om_1+1)$-strategies).
Moreover, the correspondence extends easily to $k$-maximal stacks.
A straightforward variant also establishes
equivalence with strategies for MS-rules as described earlier.

Using this, we then define a form of backgrounded
$L[\es]$-construction for J-premice, and show that it
absorbs Woodin cardinals from the background universe,
like that for MS-premice. The method for the
background construction is similar to that in \cite[\S5]{premouse_inheriting}.

Using a very similar kind of procedure
for translation between different iteration rules,
but for MS-premice,
we also prove some facts used in \cite{iter_for_stacks}
(which were stated there without proof).\footnote{The basic
idea for the tree conversion process for J-trees
was observed by the author
in 2014 or slightly earlier. After mentioning the basic method
 to Steel and Schindler during a lunch at the \emph{A2 am See}
in mid 2015, it seemed apparent that 
it had escaped notice 
prior to that point, and in particular was
not covered in  Fuchs \cite{fuchs_translation_2}. So it seemed
it should be written up.
The variant for MS-trees was observed in 2015,
and the background construction for J-mice
during 2018.}

\subsection{Notation}

See \cite{iter_for_stacks} for a guide to notation,
and in particular \cite[\S2]{iter_for_stacks}
for background on $\udash$fine structure (which
is only relevant in \S\ref{sec:MS_translation}).

A premouse is an \emph{MS-premouse} or is \emph{MS-indexed}
iff it is a premouse with Mitchell-Steel indexing, but allowing 
superstrong extenders as discussed in \cite{iter_for_stacks}.
A premouse is a \emph{J-premouse} or is \emph{J-indexed}
iff it has Jensen indexing (also known as $\lambda$-indexing). A seg-pm $M$ is 
\emph{internally MS-indexed}
iff $M^\passive$ is MS-indexed, and is \emph{internally J-indexed}
iff $M^\passive$ is J-indexed.
We write $\iota(M)=\max(\nu(F^M),\lgcd(M))$
 for an active seg-pm $M$.

For normal iteration trees $\Tt$ (where \emph{normality}
depends on context), we write $\widetilde{\nu}^\Tt_\beta$
for the exchange ordinal associated to $M^\Tt_\beta$;
that is, $\pred^\Tt(\alpha+1)$ is the least $\beta$
such that $\crit(E^\Tt_\alpha)<\widetilde{\nu}^\Tt_\beta$.

We will often deal with padded fine structural trees $\Tt$.
When $E^\Tt_\alpha=\emptyset$, we always
set $\pred^\Tt(\alpha+1)=\alpha$,
 $M^\Tt_{\alpha+1}=M^\Tt_\alpha$,
 $\deg^\Tt_{\alpha+1}=\deg^\Tt_\alpha$,
 and $i^\Tt_{\alpha,\alpha+1}=\id$.

 \section{Tree conversion for J-mice}\label{sec:J_conversion}

 \begin{dfn}
Let $M$ be an $m$-sound J-premouse.
An \emph{J-$m$-maximal} tree $\Tt$ on $M$
is one formed with the usual exchange ordinals
for iterating such mice ($\widetilde{\nu}^\Tt_\alpha=\lambda(E^\Tt_\alpha)$),
and otherwise, the usual conditions for $m$-maximality.
A \emph{J-$(m,\Omega+1)$-iteration strategy} for $M$
is an $(m,\Omega+1)$-iteration strategy for $M$ with respect to such trees.
A \emph{weakly-MS-$m$-maximal} tree $\Tt$ on $M$
is likewise, but with exchange ordinals 
$\widetilde{\nu}^\Tt_\alpha=\nu(E^\Tt_\alpha)$.
An \emph{weakly-MS-$(m,\Omega+1)$-iteration strategy}
for $M$ is one for such trees.
\end{dfn}

Let us start with describing the main point of the tree conversion process to 
be presented. Let $\Uu$ be a weakly-MS-$m$-maximal
tree on an $m$-sound J-premouse $M$.
Let $\nu=\nu(E^\Uu_0)$ and $\lambda=\lambda(E^\Uu_0)$
and $\kappa=\crit(E^\Uu_1)$, and suppose
that $\nu\leq\kappa<\lambda$ and $2\notin\dropset^\Uu$.
Then $\pred^\Uu(2)=1$,
and note that in fact $2\notin\dropset_{\deg}^\Uu$,
since letting $k=\deg^\Uu(1)$, we have $
\lambda<\rho_k(M^\Uu_1)$.
Now suppose  we  form a J-$m$-maximal
tree $\Tt$ with $E^\Tt_0=E^\Uu_0$ and $E^\Tt_1=E^\Uu_1$.
Then we would instead set $\pred^\Tt(2)=0$,
and noting that $\rho_1(\exit^\Tt_0)\leq\nu$
(where $\exit^\Tt_\alpha=M^\Tt_\alpha|\lh(E^\Tt_\alpha)$),
we get $M^{*\Tt}_2=\exit^\Tt_0$ and $\deg^\Tt_2=0$,
so $M^\Tt_2=\Ult_0(\exit^\Tt_0,E^\Tt_1)$.
But $(\exit^\Tt_0)^\passive$ is a cardinal segment
of $M^\Uu_1$ and $\lh(E^\Tt_0)\leq\rho_k(M^\Uu_1)$,
and it follows that $\Ult_0(\exit^\Tt_0,E^\Tt_1)$
is a cardinal segment of $M^\Uu_2$,
and moreover,
\[ i^{*\Tt}_2\sub i^{*\Uu}_2:M^\Uu_1\to M^\Uu_2.\]
Also, a standard computation shows that $F(M^\Tt_2)$
is equivalent to the two-step iteration given by 
$(E^\Tt_0,E^\Tt_1)=(E^\Uu_0,E^\Uu_1)$. (That is, 
forming a degree $k$ ultrapower  $U$ with $F(M^\Tt_2)$ gives the same
result as first forming an intermediate degree $k$ ultrapower $U'$ with 
$E^\Tt_0$,
then forming $U$ as the degree $k$ ultrapower of $U'$ with $E^\Tt_1$,
including agreement of the overall ultrapower maps.)
So let $\Tt'=\Tt\conc F(M^\Tt_2)$ (formed as a J-$m$-maximal tree).
Then note that $M^{*\Tt'}_3=M^{*\Uu}_1$
(since $F(M^\Tt_2)$ and $E^\Tt_0=E^\Uu_0$ have the same critical
point and measure space) and $\deg^{\Tt'}_3=\deg^\Uu_1=\deg^\Uu_2=k$,
and so $M^{\Tt'}_3=M^\Uu_2$ and $i^{*\Tt'}_3=i^\Uu_{12}\com i^{*\Uu}_1$.

The correspondence described above is the key component
of the conversion process in general. In more generality,
the nesting of extenders (like how $E^\Tt_1$ sits inside $F(M^\Tt_2)$ above)
can be arbitrarily finitely deep, 
and we need to keep track of transfinite concatenations of such things,
and the resulting modifications in tree order between
an (appropriate) J-$m$-maximal tree $\Tt$ and 
the corresponding $\Uu$.
The entire conversion process
is explicit and locally computed (in both directions, $\Tt$ to $\Uu$
and $\Uu$ to $\Tt$).

We will actually not discuss the process explicitly
for weakly-MS-$m$-maximal trees, but instead
for another form, lifting-MS-$m$-maximal trees, which arise naturally in the 
background construction.
But a very similar process can also be used for weakly-MS-$m$-maximal trees.
For the conversion between J-trees $\Tt$ and lifting-MS-trees $\Uu$,
the disagreements in tree order and models etc
also arise in a slightly different fashion
(because  $\Uu$ uses extra extenders,
which appear in  the dropdown sequence of extenders
used in $\Tt$). But the basic point of the correspondence
is still that described above. The rest of the work is mostly
a matter of bookkeeping, but somehow when one tries to 
write things down in detail, that bookkeeping
seems to run to some length.

\begin{dfn} Let $M$ be a J-pm and $N\ins M$ with $N$ active.

 The \emph{mod-dropdown} of $(M,N)$ is the sequence $\left<N_i\right>_{i\leq 
k}$,
 with $k<\om$ as large as possible, such that $N_0=N$
 and $N_{i+1}$ is the least $N'$ such that $N_i\pins N'\ins M$
 and either (i) $N'\pins M$ and $\rho_\om^{N'}<\rho_\om^{N_i}$,
 or (ii) $N'=M$.
 
 Let $\left<N_i\right>_{i\leq k}$ be the mod-dropdown of $(M,N)$.
 The \emph{reverse-e-dropdown} (\emph{rev-e-dropdown}) of $(M,F^N)$ is the 
sequence 
$\left<E_i\right>_{i\leq n}$
 enumerating all extenders of the form $F^{N_i}$ where $i\leq k$
 and $N_i$ is active and if $i>0$ then $\nu(F^{N_i})<\rho_\om^{N_{i-1}}$,
 in order of decreasing index $\OR^{N_i}$.
\end{dfn}
\begin{lem}\label{lem:rev-e-dd_seg}
 Let $\left<E_i\right>_{i\leq n}$ be the rev-e-dropdown of $(M,E)$ (so 
$E=E_n$).
 Then:
 \begin{enumerate}
  \item\label{item:nu(E_i)<lambda(E_i)} Let $i<n$.
Let $N'$ be the largest element of the mod-dropdown
of $(M,E_n)$ such that $N'\pins M|\lh(E_i)$ (so $M|\lh(E_{i+1})\ins N'$).
Then
\[ \nu(E_i)<\lambda(E_i)\leq\rho_\om^{N'}\leq\nu(E_{i+1}).\]
In fact, writing $\nu_k=\nu(E_k)$ and $\lambda_k=\lambda(E_k)$
and $\lh_k=\lh(E_k)$,
either:
\begin{enumerate}[label=--]
\item 
$\nu_i<\lambda_i=\rho_\om^{N'}\leq\rho_\om^{M|\lh_{i+1}}\leq
\nu_{i+1}\leq\lambda_{i+1}<\lh_{i+1}\leq\OR^{N'}<\lh_i$, or
\item 
$\nu_i<\rho_\om^{N'}\leq\rho_\om^{M|\lh_{i+1}}\leq
\nu_{i+1}\leq\lambda_{i+1}<\lh_{i+1}\leq\OR^{N'}<\lambda_i<\lh_i$.
\end{enumerate}
\item For each $i\leq n$,
the reverse-e-dropdown of $(M,E_i)$
 is $\left<E_k\right>_{k\leq i}$.
\item\label{item:rev-e-dd_of_Ult} Let $i<n$ and 
$U=\Ult_k(N,E_i)$, where $N$ is a $k$-sound J-premouse
and $E_i$ an $N$-extender with $\crit(E_i)<\rho_k^N$,
and if $N$ is active
then $\crit(E_i)<\lambda(F^N)$. Suppose $U$ is wellfounded.
Then the reverse-e-dropdown of $(U,E_n)$ is:
\begin{enumerate}
 \item $\left<F^U\right>\conc\left<E_k\right>_{i+1\leq k\leq n}$,
 if $N$ is 
active
 and $\nu(F^N)\leq\crit(E_i)$,
 \item $\left<E_{k}\right>_{i+1\leq k\leq n}$, otherwise.
\end{enumerate}
\end{enumerate}
\end{lem}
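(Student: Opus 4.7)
The proof is essentially a combinatorial bookkeeping exercise. My plan is to write $\langle N_j\rangle_{j\leq k}$ for the mod-dropdown of $(M,N)$ with $N=M|\lh(E_n)$; the rev-e-dropdown then picks out a subsequence $E_i=F^{N_{j_i}}$ along a strictly decreasing chain $k\geq j_0>j_1>\cdots>j_n=0$. All three parts should be reducible to this indexing together with the rev-e-dropdown inclusion condition ``$\nu(F^{N_j})<\rho_\omega^{N_{j-1}}$ whenever $j>0$'' and the strict decrease of $\rho_\omega$ along the mod-dropdown.

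For part 1, I will identify $N'$ with $N_{j_i-1}$: this is the unique mod-dropdown element immediately $\pins$-below $N_{j_i}=M|\lh(E_i)$, and since $j_{i+1}\leq j_i-1$, we also get $M|\lh(E_{i+1})=N_{j_{i+1}}\ins N'$. Then $\nu_i<\rho_\omega^{N'}$ is immediate from the inclusion condition applied to $N_{j_i}$; $\rho_\omega^{N'}\leq\rho_\omega^{M|\lh_{i+1}}$ follows from strict decrease of $\rho_\omega$ along the mod-dropdown combined with $j_{i+1}\leq j_i-1$; $\rho_\omega^{M|\lh_{i+1}}\leq\nu_{i+1}$ is the standard projectum bound for active J-premice; and $\nu_{i+1}\leq\lambda_{i+1}<\lh_{i+1}\leq\OR^{N'}<\lh_i$ comes from the definitions of $\nu,\lambda,\lh$ together with the $\pins$-ordering. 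The case split reduces to whether $\OR^{N'}<\lambda_i$ (second case) or $\lambda_i\leq\OR^{N'}<\lh_i$ (first case); in the latter, $\rho_\omega^{N'}$ is a cardinal of $M|\lh_i$ lying strictly above $\nu_i$, and the J-indexing fact that $\lambda_i$ is the largest cardinal of $(M|\lh_i)^\passive$ with no cardinals of $M|\lh_i$ strictly between $\nu_i$ and $\lambda_i$ forces $\rho_\omega^{N'}=\lambda_i$.

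Part 2 is local, since $N_{j+1}$ depends on $N_j$ alone: the mod-dropdown of $(M,N_{j_i})$ is exactly the tail $\langle N_j\rangle_{j_i\leq j\leq k}$, and the inclusion test applied to this tail picks out precisely $\langle E_\ell\rangle_{\ell\leq i}$. For part 3, the ultrapower map $\pi:N\to U$ preserves at matching indices the initial segments $M|\lh(E_{i+1}),\ldots,M|\lh(E_n)$, which lie below $\lh(E_i)$ and are carried through by the agreement properties of extender ultrapowers on J-premice (using $\crit(E_i)<\lambda(F^N)$ in the active case). Thus the tail $\langle E_{i+1},\ldots,E_n\rangle$ persists in the rev-e-dropdown of $(U,E_n)$, and the only question is whether a front extender is inserted. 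By the two-step iteration computation sketched in the paragraph preceding the lemma, $U$ is active with $F^U$ realizing the composition $(E_i,F^N)$ precisely when $N$ is active with $\nu(F^N)\leq\crit(E_i)$, and in that case $F^U$ satisfies the $\nu$-inclusion condition against $\rho_\omega^{M|\lh(E_n)}$, giving case~(i); otherwise no new front extender appears and we are in case~(ii). The main obstacle I expect is the fine-structural identification $\rho_\omega^{N'}=\lambda_i$ in the first case of part 1, together with verifying the precise extender-sequence coherence under the ultrapower in part 3; everything else is routine.
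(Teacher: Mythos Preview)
Your approach to parts 1 and 2 is essentially what the paper intends (it leaves these to the reader as ``straightforward consequences of the definitions''), and your indexing via $j_0>\cdots>j_n=0$ is the natural way to organize it.

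However, your treatment of part 3 has a genuine gap. First, a small confusion: $U$ is active iff $N$ is active, regardless of whether $\nu(F^N)\leq\crit(E_i)$; the condition $\nu(F^N)\leq\crit(E_i)$ governs not whether $F^U$ exists but whether it enters the rev-e-dropdown. Second, you check the $\nu$-inclusion condition for $F^U$ against $\rho_\omega^{M|\lh(E_n)}$, but the relevant comparison is against $\rho_\omega^{N'}$ where $N'=N_{j_i-1}$: using coherence and that $\lh(E_i)$ is a $U$-cardinal, the mod-dropdown of $(U,E_n)$ is exactly $\langle N_0,\ldots,N_{j_i-1},U\rangle$, so the element immediately below $U$ is $N'$, not $N_0$. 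Third, and most importantly, you do not handle the sub-case of (b) where $N$ is active with $\crit(E_i)<\nu(F^N)$. Here $U$ \emph{is} active, so you must explain why $F^U$ fails the inclusion condition. The paper's argument is a generator computation: if $\gamma$ is a generator of $F^N$ then $j(\gamma)$ is a generator of $F^U$ (where $j:N\to U$ is the ultrapower map), hence $\nu(F^U)\geq\sup j``\nu(F^N)>\lambda(E_i)\geq\rho_\omega^{N'}$, the last inequality by part 1. Dually, in case (a) the paper uses the standard computation that $\nu(F^U)=\nu(E_i)$, which combined with $\nu(E_i)<\rho_\omega^{N'}$ from part 1 gives the inclusion; you do not mention this identity.

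One further caution on part 1: your claim that there are no cardinals of $M|\lh_i$ strictly between $\nu_i$ and $\lambda_i$ is not obvious in general (consider $E_i$ of type 1 or type 2), and you should either justify it or argue $\rho_\omega^{N'}=\lambda_i$ differently, e.g.\ by combining $\rho_\omega^{N'}\leq\lambda_i$ (from $\lambda_i\leq\OR^{N'}<\lh_i$ and the mod-dropdown minimality of $M|\lh_i$ above $N'$) with the fact that $\rho_\omega^{N'}$ must be a cardinal of $M||\lh_i$ lying in $(\nu_i,\lambda_i]$.
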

\begin{proof}
This is basically a straightforward
consequence of the definitions, and left to the reader.
In part \ref{item:rev-e-dd_of_Ult},
if $N$ is active with $\nu(F^N)\leq\crit(E_i)$,
use coherence
and that $\lh(E_i)$ is a $U$-cardinal to get
appropriate agreement of the mod-dropdowns of $M$ and $U$,
and use that $\nu(F^U)=\nu(E_i)$, which is by a
standard calculation
(see for example \cite[Lemma 2.11***]{extmax}).
If instead $N$ is active
with $\crit(E_i)<\nu(F^N)$,
and $\gamma$ is a generator of $F^N$,
then $j(\gamma)$ is a generator of $F^U$,
where $j:N\to U$ is the ultrapower map,
and hence $\nu(F^U)\geq j``\nu(F^N)>\lambda(E_i)\geq\rho_\om^{N'}$,
where $N'$ is as in part \ref{item:nu(E_i)<lambda(E_i)}.
Note $N'$ is in the mod-dropdown of $(U,E_n)$, 
so $F^U$ is not in the 
rev-e-dropdown
in this case.
\end{proof}

\begin{dfn}Let $M$ be an $m$-sound J-pm and $\Tt$ a putative iteration tree on 
$M$.
We say that $\Tt$ is \emph{lifting-MS-$m$-maximal} iff
there is an ordinal $\lambda>0$ and sequences 
$\left<\eta_\alpha\right>_{\alpha<\lambda}$,
$\left<n_\alpha\right>_{\alpha+1<\lambda}$ such that:
\begin{enumerate}
 \item $\eta_0=0$ and $\left<\eta_\alpha\right>_{\alpha<\lambda}$ is continuous.
 \item $n_\alpha<\om$ and $\eta_{\alpha+1}=\eta_\alpha+n_\alpha+1$.
 \item If $\lambda=\alpha+1$ then $n_\alpha=0$.
 \item $\Tt$ has length $\eta=\sup_{\alpha<\lambda}(\eta_\alpha+1)$.
 \item If $\gamma+1<\lh(\Tt)$ then $E^\Tt_\gamma\in\es_+(M^\Tt_\gamma)$.
 \item\label{item:rev_e-drop} If $\alpha+1<\lambda$ then 
$\left<E^\Tt_{\eta_\alpha},E^\Tt_{\eta_\alpha+1},\ldots,E^\Tt_{
\alpha_\alpha+n_\alpha}\right>$
 is the rev-e-dropdown of 
$(M^\Tt_{\eta_\alpha},E^\Tt_{\eta_\alpha+n_\alpha})$.
 \item If $\alpha+1<\beta+1<\lambda$ then 
$\lh(E^\Tt_{\eta_\alpha+n_\alpha})<\lh(E^\Tt_{\eta_\beta+n_\beta})$.
 \item If $\gamma+1<\lh(\Tt)$ then $\pred^\Tt(\gamma+1)$ is the least $\delta$ 
such that $\crit(E^\Tt_\gamma)<\nu(E^\Tt_\delta)$,
 and $M^{*\Tt}_{\gamma+1}\ins M^\Tt_\delta$ and $\deg^\Tt(\gamma+1)$ are 
determined as usual for $m$-maximality.\qedhere
\end{enumerate}
\end{dfn}

\begin{rem} Note that given $\Tt\rest(\eta_\alpha+1)$,
we can choose any $E\in\es_+(M^\Tt_{\eta_\alpha})$
such that $\lh(E^\Tt_{\eta_\beta+n_\beta})<\lh(E)$ for all $\beta<\alpha$,
and then extend $\Tt$ by satisfying condition \ref{item:rev_e-drop}
with $E^\Tt_{\eta_\alpha+n_\alpha}=E$, given that the resulting ultrapowers are 
wellfounded.
In particular, $E^\Tt_{\eta_\alpha+i}\in\es_+(M^\Tt_{\eta_\alpha+i})$
for each $i\leq n_\alpha$, by coherence.

Also note that by \ref{lem:rev-e-dd_seg} and coherence,
for all $j,k$ with  $j\leq k\leq n_\alpha$,
\[ 
\left<E^\Tt_{\eta_\alpha+j},E^\Tt_{\eta_\alpha+j+1},\ldots,E^\Tt_{\eta_\alpha+k}
\right> \]
is a \emph{tail segment} of the rev-e-dropdown of 
$(M^\Tt_{\eta_\alpha+j},E^\Tt_{\eta_\alpha+k})$.
It follows from Lemma \ref{lem:nu_inc} below that it is actually the 
\emph{full} 
rev-e-dropdown.
\end{rem}

\begin{lem}\label{lem:nu_inc}Let $M$ be an $m$-sound J-premouse and $\Tt$ be 
lifting-MS-$m$-maximal on $M$.
 Then:
 \begin{enumerate}
  \item\label{item:nu_increasing} $\nu(E^\Tt_\gamma)<\nu(E^\Tt_\delta)$ for all 
$\delta+1<\gamma+1<\lh(\Tt)$.
  \item\label{item:active_nu} Let $\gamma<\lh(\Tt)$ be such that 
$F=F^{M^\Tt_{\gamma}}\neq\emptyset$.
  Then $\lambda(E^\Tt_{\eta_\alpha+n_\alpha})<\nu(F)$ for all $\alpha$
  such that $\eta_\alpha+n_\alpha<\gamma$,
  and if $\gamma=\delta+1$ then $\lambda(E^\Tt_\delta)<\nu(F)$.
  \item\label{item:crit<nu} For all $\gamma+1<\lh(\Tt)$, if 
$M^{*\Tt}_{\gamma+1}$ is active
  then $\crit(E^\Tt_\gamma)<\nu(F(M^{*\Tt}_{\gamma+1}))$.
 \end{enumerate}

\end{lem}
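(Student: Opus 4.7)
My plan is to establish all three parts simultaneously by induction on $\gamma<\lh(\Tt)$. Within-block behaviour is driven directly by Lemma \ref{lem:rev-e-dd_seg}\ref{item:nu(E_i)<lambda(E_i)}, and across-block behaviour is handled by combining the length-increase condition on final extenders of blocks with extender coherence and the inductive hypotheses on parts \ref{item:active_nu} and \ref{item:crit<nu}.

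For part \ref{item:nu_increasing}, the within-block case $\delta,\gamma\in[\eta_\alpha,\eta_\alpha+n_\alpha]$ is immediate from Lemma \ref{lem:rev-e-dd_seg}\ref{item:nu(E_i)<lambda(E_i)}, which supplies $\nu(E^\Tt_{\eta_\alpha+i})<\lambda(E^\Tt_{\eta_\alpha+i})\leq\nu(E^\Tt_{\eta_\alpha+i+1})$. For the across-block case I would reduce to comparing $\nu(E^\Tt_{\eta_\alpha+n_\alpha})$ with $\nu(E^\Tt_{\eta_\beta})$ for $\alpha<\beta$, splitting on whether $E^\Tt_{\eta_\beta}=F^{M^\Tt_{\eta_\beta}}$. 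In the active-top case, the inductive instance of part \ref{item:active_nu} at $\gamma=\eta_\beta$ gives $\lambda(E^\Tt_{\eta_\alpha+n_\alpha})<\nu(F^{M^\Tt_{\eta_\beta}})=\nu(E^\Tt_{\eta_\beta})$. Otherwise $E^\Tt_{\eta_\beta}=F^{N'}$ for some $N'\pins M^\Tt_{\eta_\beta}$ at the top of the mod-dropdown of $(M^\Tt_{\eta_\beta},E^\Tt_{\eta_\beta+n_\beta})$, and coherence together with the length-increase condition forces $\lh(E^\Tt_{\eta_\alpha+n_\alpha})<\OR^{N'}$, after which Lemma \ref{lem:rev-e-dd_seg}\ref{item:nu(E_i)<lambda(E_i)} applied inside $M^\Tt_{\eta_\beta}$ closes the comparison.

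Part \ref{item:active_nu} follows by a similar split: if $F=F^{M^\Tt_\gamma}\neq\emptyset$, then $\nu(F)$ lies above every index $\lh(E^\Tt_{\eta_\alpha+n_\alpha})$ with $\eta_\alpha+n_\alpha<\gamma$ by coherence of $M^\Tt_\gamma$ with the intermediate models and the length-increase condition; the successor case $\gamma=\delta+1$ is then a standard generator calculation for the image of $F(M^{*\Tt}_{\delta+1})$ under the ultrapower, parallel to Lemma \ref{lem:rev-e-dd_seg}\ref{item:rev-e-dd_of_Ult}. For part \ref{item:crit<nu}, the lifting-MS rule places $\pred^\Tt(\gamma+1)$ just above where $\crit(E^\Tt_\gamma)$ sits in the $\nu$-sequence, and part \ref{item:active_nu} applied to $M^{*\Tt}_{\gamma+1}$ translates this into the stated generator bound, since otherwise the choice of $M^{*\Tt}_{\gamma+1}$ as active would be inconsistent with the $m$-maximality rules.

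The main obstacle I expect is the across-block step of part \ref{item:nu_increasing}: pinning down precisely which $N'\pins M^\Tt_{\eta_\beta}$ produces $E^\Tt_{\eta_\beta}$ and verifying that, once the ultrapower agreement produced by the earlier blocks is unwound, the needed length and generator bounds actually close. This coherence bookkeeping is where the rev-e-dropdown machinery of Lemma \ref{lem:rev-e-dd_seg} must be invoked model-by-model rather than only at $M^\Tt_{\eta_\beta}$ itself.
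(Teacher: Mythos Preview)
Your induction structure and your handling of part~\ref{item:nu_increasing} match the paper's approach: reduce $\delta$ to a block-final index $\eta_\alpha+n_\alpha$, use that $\lh(E^\Tt_\delta)$ is a cardinal of $M^\Tt_\gamma$ below $\lh(E^\Tt_\gamma)$, and split on whether $E^\Tt_\gamma=F(M^\Tt_\gamma)$, invoking inductive part~\ref{item:active_nu} in the active case and the bound $\lh(E^\Tt_\delta)\leq\rho_\om^{\exit^\Tt_\gamma}\leq\nu(E^\Tt_\gamma)$ otherwise. Your part~\ref{item:active_nu} via the generator calculation is likewise what the paper means by ``follows from part~\ref{item:crit<nu} as usual''. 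You have, however, mislocated the difficulty: the across-block step of part~\ref{item:nu_increasing} is a one-line cardinality argument, not the obstacle you flag.

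The genuine gap is part~\ref{item:crit<nu}. Your sketch invokes ``part~\ref{item:active_nu} applied to $M^{*\Tt}_{\gamma+1}$'', but part~\ref{item:active_nu} is a statement about $F(M^\Tt_{\gamma'})$ for tree nodes $\gamma'$, not about the active extender of a dropped-to segment $M^{*\Tt}_{\gamma+1}\pins M^\Tt_\delta$; there is no inductive instance available there, and the vague appeal to ``$m$-maximality rules'' does not produce the bound. The paper argues part~\ref{item:crit<nu} by contradiction: if $\nu(F(M^{*\Tt}_{\gamma+1}))\leq\kappa<\nu(E^\Tt_\delta)$, then $F(M^{*\Tt}_{\gamma+1})$ must lie in the rev-e-dropdown of $(M^\Tt_\delta,E^\Tt_\delta)$. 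One then splits on the position of $\delta$ within its block $[\eta_\alpha,\eta_\alpha+n_\alpha]$. If $\delta=\eta_\alpha$, the extra extender $F(M^{*\Tt}_{\gamma+1})$ should already have been used in the block, a contradiction. If $\delta=\eta_\alpha+i+1$, one combines inductive part~\ref{item:active_nu} at $\delta$ (giving $\lambda(E^\Tt_{\eta_\alpha+i})<\nu(F(M^\Tt_\delta))$) with $\lh(E^\Tt_{\eta_\alpha+i})=(\lambda(E^\Tt_{\eta_\alpha+i})^+)^{M^\Tt_\delta}$ to force $M^{*\Tt}_{\gamma+1}=M^\Tt_\delta$, and then squeezes a projectum ${\leq\kappa}$ strictly between $\exit^\Tt_\delta$ and $M^\Tt_\delta|\lh(E^\Tt_{\eta_\alpha+i})$, contradicting the determination of $M^{*\Tt}_{\gamma+1}$. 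This block-position case analysis is the substantive content of the lemma, and your plan does not contain it.
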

\begin{proof}
We prove that the lemma holds for $\Tt\rest\xi$ by induction on $\xi$.
Suppose that it holds for $\Tt\rest\xi$; we prove it for $\Tt\rest\xi+1$.

If $\xi$ is a limit then we just have to verify part \ref{item:active_nu}
holds when $\gamma=\xi$, but this is easy.

So suppose $\xi=\gamma+1$.

Part \ref{item:nu_increasing}: Let $\delta<\gamma$.
If there is $\alpha$ such that 
$\delta,\gamma\in[\eta_\alpha,\eta_\alpha+n_\alpha]$,
then $\nu(E^\Tt_\delta)<\nu(E^\Tt_\gamma)$ by definition of the 
reverse-e-dropdown.
So we may assume that $\delta=\eta_\alpha+n_\alpha$ for some $\alpha$.
Then $\lh(E^\Tt_\delta)$ is a cardinal of $M^\Tt_\gamma$
and $\lh(E^\Tt_\delta)<\lh(E^\Tt_\gamma)$. So if $E^\Tt_\gamma\neq 
F(M^\Tt_\gamma)$
then easily $\nu(E^\Tt_\delta)<\lh(E^\Tt_\delta)\leq\nu(E^\Tt_\gamma)$, so we 
are done.
And if $E^\Tt_\gamma=F(M^\Tt_\gamma)$
then by induction part \ref{item:active_nu},
we have $\nu(E^\Tt_\delta)\leq\lambda(E^\Tt_\delta)<\nu(E^\Tt_\gamma)$, as 
required.

Part \ref{item:crit<nu}:
 Suppose not. Let $\delta=\pred^\Tt(\gamma+1)$ and $\kappa=\crit(E^\Tt_\gamma)$.
 So
 \[ \nu(F(M^{*\Tt}_{\gamma+1}))\leq\kappa<\nu(E^\Tt_\delta), \]
 so $\exit^\Tt_\delta\pins M^{*\Tt}_{\gamma+1}$.
 There is no $N$ such that $\exit^\Tt_\delta\ins N\pins M^{*\Tt}_{\gamma+1}$
 and $\rho_\om^N\leq\kappa$. But then it follows that $F(M^{*\Tt}_{\gamma+1})$
 is in the reverse-e-dropdown of $(M^\Tt_\delta,\exit^\Tt_\delta)$.

 Now fix $\alpha$ such that $\delta\in[\eta_\alpha,\eta_\alpha+n_\alpha]$.
 If $\delta=\eta_\alpha$, then by \ref{lem:rev-e-dd_seg}, we should have used
 $F(M^{*\Tt}_{\alpha+1})$ in $\Tt$ before $E^\Tt_\delta$,
 contradiction. So $\delta=\eta_\alpha+i+1$ for some $i+1\leq n_\alpha$.
 Let $F=E^\Tt_{\eta_\alpha+i}$.
 Then $\lambda(F)$ and $\lh(F)$ are cardinals of $M^\Tt_\delta$,
 and we can't have $\lh(F)<\OR(M^{*\Tt}_{\gamma+1})$
 by coherence.
 But since
 \[ \rho_1(M^{*\Tt}_{\gamma+1})\leq\nu(M^{*\Tt}_{\gamma+1})<\lh(F),\]
 it follows that $M^{*\Tt}_{\gamma+1}=M^\Tt_\delta$.
 By induction, $\lambda(F)<\nu(F(M^\Tt_\delta))$.
 So
 \[ \lambda(F)<\nu(F(M^{*\Tt}_{\gamma+1}))\leq\kappa<\lh(E^\Tt_\delta)<\lh(F). 
\]
 But $\lh(F)=(\lambda(F)^+)^{M^\Tt_\delta}$, so there is some $N$ such that
 \[ \exit^\Tt_\delta\ins N\pins M^\Tt_\delta|\lh(F)\]
 and $\rho_\om^N\leq\kappa$. So $\OR(M^{*\Tt}_{\gamma+1})<\lh(F)$, 
contradiction.

 Part \ref{item:active_nu} follows from part \ref{item:crit<nu} as usual.
\end{proof}

\begin{dfn}
 Let $M$ be an $m$-sound J-pm.
 A \emph{lifting-MS-}$(m,\om_1+1)$-strategy for $M$
 is a winning strategy for player II in the iteration game producing (putative) 
lifting-MS-$m$-maximal trees,
 in which, given $\Tt\rest(\eta_\alpha+1)$ (notation as above),
 player I chooses $E\in\es_+(M^\Tt_{\eta_\alpha})$
 with $\lh(E^\Tt_{\eta_\beta+n_\beta})<\lh(E)$ for all $\beta<\alpha$,
 then $\Tt$ is extended with the reverse-e-dropdown
 of $(M^\Tt_\alpha,E)$, producing $\Tt\rest(\eta_{\alpha+1}+1)$;
 and player II chooses cofinal branches at limit stages.
 Player II must ensure every model produced is wellfounded, in order to win.
 
 We say that $M$ is \emph{lifting-MS-$(m,\om_1+1)$-iterable}
 iff there is a such an iteration strategy for $M$.
\end{dfn}

\begin{tm}\label{tm:lifting-iter_equiv}
Let $\Omega>\om$ be regular.
 Let $M$ be an $m$-sound J-pm.
 Then the following are equivalent:
 \begin{enumerate}[label=--]
  \item $M$ is J-$(m,\Omega+1)$-iterable,
  \item $M$ is weakly-MS-$(m,\Omega+1)$-iterable,
  \item $M$ is lifting-MS-$(m,\Omega+1)$-iterable.
 \end{enumerate}
 Moreover, there is a one-to-one correspondence
 between strategies of one kind and those of another,
 and given such a pair $(\Lambda,\Sigma)$
 of strategies, there is a direct
 (and local) translation between trees $\Tt$
 via $\Lambda$ and corresponding
 trees $\Uu$ via $\Sigma$.
\end{tm}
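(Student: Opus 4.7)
I focus on the main equivalence between J-iterability and lifting-MS-iterability, whose proof is via an explicit, local bijection between J-$m$-maximal trees $\Tt$ on $M$ and lifting-MS-$m$-maximal trees $\Uu$ on $M$; the weakly-MS case is treated by a minor variant. The core correspondence, inspired by the two-extender motivational example above, matches a rev-e-dropdown block $\langle E^\Uu_{\eta_\alpha},\ldots,E^\Uu_{\eta_\alpha+n_\alpha}\rangle$ of $\Uu$ with the single extender $E^\Tt_\alpha := E^\Uu_{\eta_\alpha+n_\alpha}$, viewed as an extender on $M^\Tt_\alpha = M^\Uu_{\eta_\alpha}$ via coherence, and maintains matching $\deg$, dropset, and ultrapower embedding, with $M^\Tt_{\alpha+1} = M^\Uu_{\eta_{\alpha+1}}$.

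The technical heart is a fold/unfold lemma: applying $\langle E^\Uu_{\eta_\alpha},\ldots,E^\Uu_{\eta_\alpha+n_\alpha}\rangle$ as successive MS-rule ultrapowers in $\Uu$ yields the same model and embedding as the single J-rule ultrapower in $\Tt$ by $E^\Tt_\alpha$ applied to the appropriate dropdown segment. I would prove this by induction on the block length $n_\alpha+1$, using Lemma~\ref{lem:rev-e-dd_seg}\ref{item:rev-e-dd_of_Ult}: after applying the first extender of a rev-e-dropdown, the tail of the rev-e-dropdown is preserved (possibly with the new active extender prepended), which is exactly the recursion needed to collapse an MS-block into a single J-ultrapower. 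The inequalities $\nu_i<\lambda_i\leq\rho_\om^{N'}\leq\nu_{i+1}\leq\lambda_{i+1}$ of Lemma~\ref{lem:rev-e-dd_seg}\ref{item:nu(E_i)<lambda(E_i)} control how the intermediate drops in the $M^{*\Uu}_{\eta_\alpha+i+1}$ collapse to the single drop computation in $\Tt$.

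With this in hand, the translation proceeds by recursion on tree length. From $\Uu$ to $\Tt$: define $E^\Tt_\alpha$ as above; the J-predecessor $\pred^\Tt(\alpha+1)$ is the least $\beta$ with $\crit(E^\Tt_\alpha)<\lambda(E^\Tt_\beta)$, and using Lemma~\ref{lem:nu_inc} together with the $\nu$-versus-$\lambda$ structure of rev-e-dropdowns one verifies that this is precisely the $\beta$ for which $\pred^\Uu(\eta_\alpha+n_\alpha)\in[\eta_\beta,\eta_\beta+n_\beta]$. From $\Tt$ to $\Uu$: at stage $\alpha$ of $\Tt$, let $\langle E^\Uu_{\eta_\alpha+i}\rangle_{i\leq n_\alpha}$ be the rev-e-dropdown of $(M^\Uu_{\eta_\alpha},E^\Tt_\alpha)$, and use it as the next block of $\Uu$. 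Continuity of $\langle\eta_\alpha\rangle$ handles limit stages: a cofinal branch $b$ through $\Tt$ corresponds to a cofinal branch through $\Uu$ cofinal in $\eta_\lambda := \sup_{\alpha<\lambda}\eta_\alpha$, and the direct limits along corresponding branches agree by applying the key lemma cofinally, so wellfoundedness transfers in both directions.

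Strategies then transfer via the bijection in the obvious way: a witnessing strategy for one kind of tree, composed with the translation, gives a strategy for the other kind, and bijectivity yields the one-to-one correspondence. The weakly-MS equivalence is handled by a variant using that lifting-MS-trees sit inside weakly-MS-trees once the rev-e-dropdown blocks are spelled out, with strategy transfer obtained by locally completing/absorbing dropdown extenders around each player-I move. The main obstacle will be the bookkeeping at stages where the block $[\eta_\alpha,\eta_\alpha+n_\alpha]$ in $\Uu$ contains multiple drops among the $M^{*\Uu}_{\eta_\alpha+i+1}$, which must collapse to a single drop in $\Tt$: tracking which segment of $M^\Tt_{\pred^\Tt(\alpha+1)}$ corresponds to the cumulative drops in $\Uu$, and matching $\deg$ accordingly, requires careful case analysis using Lemma~\ref{lem:rev-e-dd_seg} at each step.
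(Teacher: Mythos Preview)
Your proposed correspondence $M^\Tt_\alpha = M^\Uu_{\eta_\alpha}$ with $E^\Tt_\alpha = E^\Uu_{\eta_\alpha+n_\alpha}$ fails already in the two-extender example discussed just before the theorem. Take $n_0=n_1=0$ there, so $\eta_\alpha=\alpha$. With $\nu(E^\Uu_0)\le\crit(E^\Uu_1)<\lambda(E^\Uu_0)$, the MS-rule gives $\pred^\Uu(2)=1$ and $M^\Uu_2=\Ult_k(M^\Uu_1,E^\Uu_1)$, but the J-rule forces $\pred^\Tt(2)=0$, a drop to $M^{*\Tt}_2=\exit^\Tt_0$, and $M^\Tt_2=\Ult_0(\exit^\Tt_0,E^\Tt_1)$, which is a \emph{proper} cardinal segment of $M^\Uu_2$. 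So $M^\Tt_2\neq M^\Uu_{\eta_2}$, and your fold/unfold lemma as stated is false.

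What actually happens is that $F(M^\Tt_2)$ is equivalent to the two-step composition $(E^\Tt_0,E^\Tt_1)$, and the J-tree must \emph{use} this active extender (possibly iteratively) to recover $M^\Uu_2$; this is the ``$\Tt$-special'' phenomenon. The paper's conversion therefore does not index $\Tt$ by blocks of $\Uu$: both trees have the same length, with $\Tt$ nicely padded, and at $\Tt$-special stages the correspondence is mediated by the notion of $\core(F(M^\Tt_\alpha))$, the despecialized rev-e-dropdown, and an explicit unravelling process (Definitions 2.7--2.16). Your reference to Lemma~\ref{lem:rev-e-dd_seg}\ref{item:rev-e-dd_of_Ult} is on point---the case where $F^U$ gets prepended is exactly the special case---but you have not said what the J-tree does with that prepended extender, and that is where all the real work lies. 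The limit case is correspondingly more delicate: the branch through $\Uu$ is recovered from the $\Tt$-branch via the $\le^{\ext,\Tt}$-closure of the extenders along it (Definition~\ref{dfn:conversion_J}\ref{item:conversion_limits_J}), not by a simple $\eta_\alpha$-reindexing.
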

The precise meaning of statements at the end is clarified
in what follows.
\begin{dfn}
 Let $\Tt$ be a J-$m$-maximal tree
 on an $m$-sound J-premouse.
 Let $\alpha<\lh(\Tt)$. We say $\alpha$ is \emph{$\Tt$-special}
 iff $M^\Tt_\alpha$ is active and there is $\beta+1\leq^\Tt\alpha$ such that 
 $E^\Tt_\beta\neq\emptyset$ and
$(\beta+1,\alpha]_\Tt\inter\dropset^\Tt=\emptyset$
and $\nu(F(M^{*\Tt}_{\beta+1}))\leq\crit(E^\Tt_\beta)$.
We say $\alpha$ is \emph{$\Tt$-very-special} (\emph{$\Tt$-vs})
iff $\Tt$-special and $E^\Tt_\alpha=F(M^\Tt_\alpha)$.
If $\alpha$ is $\Tt$-special, and $\beta$ is the least witness,
let $\core(F(M^\Tt_\alpha))$ denote
$F(M^{*\Tt}_{\beta+1})$.

Let $E\in\es_+(M^\Tt_\alpha)$. Let
$\left<E_i\right>_{i\leq n}$ be the rev-e-dropdown
of $(M^\Tt_\alpha,E)$. The \emph{$\Tt$-despecialized-reverse-e-dropdown}
(\emph{$\Tt$-despec-rev-e-dropdown})
of $(M^\Tt_\alpha,E)$ is $\left<E_i\right>_{1\leq i\leq n}$,
if $\alpha$ is $\Tt$-special, and is $E_i=F(M^\Tt_\alpha)$,
and is $\left<E_i\right>_{i\leq n}$ otherwise.\footnote{So
when $\Tt$-despecializing, we just remove $E_0$ from 
the rev-e-dropdown
if $E_0=F(M^\Tt_\alpha)$ and $\alpha$ is $\Tt$-special.}
\end{dfn}

Note  $\alpha<^{\ext,\Tt}\beta\implies \alpha<\beta$.
So $<^{\ext,\Tt}$ is wellfounded.

\begin{lem}\label{lem:core_appears_J}
 Let $\Tt$ be a J-$m$-maximal tree
 on an $m$-sound J-premouse.
Let $\alpha$ be $\Tt$-special, $\beta$ the least
witness and $\gamma=\pred^\Tt(\beta+1)$.
Let $F=\core(F(M^\Tt_\alpha))=F(M^{*\Tt}_{\beta+1})$.
Then:
\begin{enumerate}
\item\label{item:F_in_rev-e-dd} $F$
is in the reverse-e-dropdown of
$(M^\Tt_\gamma,E^\Tt_\gamma)$, which
is identical with the $\Tt$-despecialized-reverse-e-dropdown
of $(M^\Tt_\gamma,E^\Tt_\gamma)$,
and $\gamma$ is the 
unique such ordinal.
 \item\label{item:nu(F)_leq_kappa_<_lambda(F)} 
$\nu(F)\leq\crit(E^\Tt_\beta)<\min(\lambda(E^\Tt_\gamma),\lambda(F))$,
 \item $\deg^\Tt_{\beta+1}=\deg^\Tt_\alpha=0$,
\item\label{item:ext_assoc_J} $F(M^\Tt_\alpha)$ is equivalent to the composition
of the extenders 
\[ \left<F\right>\conc\left<E^\Tt_\delta\right>_{\delta+1\in[\beta+1,\alpha]
_\Tt}\]
and for $\delta_0,\delta_1\in[\beta+1,\alpha]_\Tt$
with $\delta_0<\delta_1$, we have 
$\lambda(E^\Tt_{\delta_0})\leq\crit(E^\Tt_{\delta_1})$.
\end{enumerate}
\end{lem}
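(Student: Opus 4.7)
Throughout fix $\kappa=\crit(E^\Tt_\beta)$ and $P=M^{*\Tt}_{\beta+1}$, so $P$ is active with $F=F^P$. Parts (2) and (3) are largely definitional unpackings. For part (2): $\nu(F)\leq\kappa$ is exactly the witness condition for $\alpha$ being $\Tt$-special at $\beta$, and $\kappa<\lambda(E^\Tt_\gamma)$ is the defining property of $\gamma=\pred^\Tt(\beta+1)$ under the J-$m$-maximal rule. For $\kappa<\lambda(F)$: since $P$ is active J-indexed, $\OR^P=\lh(F)=(\lambda(F)^+)^{\Ult(P,F)}$; the alternative $\lambda(F)\leq\kappa$ would put $\kappa\in[\lambda(F),\lh(F))$, a range of non-cardinals in $\Ult(P,F)$, which (combined with $\nu(F)\leq\kappa$) contradicts the standard drop-target criterion for $P$. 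For part (3), the assumption $(\beta+1,\alpha]_\Tt\cap\dropset^\Tt=\emptyset$ gives $\deg^\Tt_{\beta+1}=\deg^\Tt_\alpha$; and $\deg^\Tt_{\beta+1}=0$ follows from the standard fine-structural fact that an active J-premouse $P$ with $\nu(F)\leq\kappa$ has $\rho_1^P\leq\nu(F)\leq\kappa$, so the largest $k$ with $\kappa<\rho_k^P$ is $0$.

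For part (1), I would first show $P$ appears in the mod-dropdown of $(M^\Tt_\gamma,\exit^\Tt_\gamma)$. The containment $\exit^\Tt_\gamma\ins P\ins M^\Tt_\gamma$ is standard: $P$ as the drop target is chosen so that $E^\Tt_\gamma$ applies coherently at $P$, giving $\exit^\Tt_\gamma\ins P$ by model agreement. That $P$ is one of the $N_i$'s is verified by inspection: moving up from $\exit^\Tt_\gamma$, the first $\rho_\om$-drop below $\rho_\om^{\exit^\Tt_\gamma}$ lands at $P$, using $\rho_\om^P\leq\nu(F)\leq\kappa$ together with the fact that no intermediate proper initial segment achieves a strictly smaller $\rho_\om$. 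Then $F$ enters the rev-e-dropdown because $\nu(F)\leq\kappa<\rho_\om^{N_{i-1}}$ for the preceding entry (whose $\rho_\om$ strictly exceeds $\rho_\om^P$). Uniqueness of $\gamma$: if $F$ appeared in the rev-e-dropdown of some $(M^\Tt_{\gamma'},E^\Tt_{\gamma'})$ then $P\ins M^\Tt_{\gamma'}$, and tree coherence combined with the fixed index $\lh(F)$ forces $\gamma'=\gamma$. The identification with the despec version reduces to ruling out the degenerate case where $\gamma$ is itself $\Tt$-special with $E_0=F(M^\Tt_\gamma)$; this is excluded since $F\neq F(M^\Tt_\gamma)$, because $F$ sits at a proper initial segment $P\pins M^\Tt_\gamma$ strictly below the topmost entry.

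For part (4), since $(\beta+1,\alpha]_\Tt\cap\dropset^\Tt=\emptyset$ and all degrees equal $0$, the composite $i^\Tt_{\beta+1,\alpha}\circ i_{E^\Tt_\beta}:P\to M^\Tt_\alpha$ is a composition of $\Sigma_0$-ultrapowers, each preserving activeness by coherence, and $F(M^\Tt_\alpha)$ is the image of $F$ under this composite. By the standard shift-lemma/composition-of-extenders argument, this composite is equivalent to a single application of the composed extender $\left<F\right>\conc\left<E^\Tt_\delta\right>_{\delta+1\in[\beta+1,\alpha]_\Tt}$. The critical-point agreement $\lambda(E^\Tt_{\delta_0})\leq\crit(E^\Tt_{\delta_1})$ for $\delta_0<\delta_1$ on the branch is direct: $\pred^\Tt(\delta_1+1)\geq\delta_0+1$ forces, by the minimality clause in the J-rules, $\crit(E^\Tt_{\delta_1})\geq\lambda(E^\Tt_{\delta_0})$. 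The main obstacle will be making the iterated shift-lemma argument fully precise through both successor and limit stages of $[\beta+1,\alpha]_\Tt$, and verifying the $\nu$-conditions for part (1) using soundness of the intermediate mod-dropdown segments; the remaining content is direct unpacking.
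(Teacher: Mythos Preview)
Your proposal is essentially correct and follows the same approach as the paper's (very brief) proof sketch, which just invokes ``standard facts'' and a reference for the extender-composition. Two small imprecisions in your part~\ref{item:F_in_rev-e-dd} argument are worth flagging:

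First, the claim that ``the first $\rho_\om$-drop below $\rho_\om^{\exit^\Tt_\gamma}$ lands at $P$'' is not quite right: $P=M^{*\Tt}_{\beta+1}$ is the first element $N_i$ of the mod-dropdown with $\rho_\om^{N_i}\leq\kappa$, which need not be $N_1$. The point is just that the $\rho_\om^{N_j}$ are strictly decreasing and $P$ is the least segment above $\exit^\Tt_\gamma$ projecting to $\leq\kappa$, so $P=N_i$ for the least $i$ with $\rho_\om^{N_i}\leq\kappa$; then $\nu(F)\leq\kappa<\rho_\om^{N_{i-1}}$ gives $F$ in the rev-e-dropdown.

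Second, your despecialization argument (``$F\neq F(M^\Tt_\gamma)$ since $P\pins M^\Tt_\gamma$'') tacitly assumes $\beta+1\in\dropset^\Tt$. When $\beta+1\notin\dropset^\Tt$ one has $P=M^\Tt_\gamma$ and $F=F(M^\Tt_\gamma)$, so that argument fails; instead, in this case $\gamma$ is non-$\Tt$-special by minimality of $\beta$: any witness $\beta'$ for $\gamma$ being $\Tt$-special would concatenate with the non-dropping interval $(\gamma,\alpha]_\Tt$ to give a witness $\beta'<\beta$ for $\alpha$. With that case handled, despecialization removes nothing relevant in either case, and the rest of your outline goes through.
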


\begin{proof}[Proof sketch]

Part \ref{item:F_in_rev-e-dd}:
It is a standard fact (and easy to see)
that $M^{*\Tt}_{\beta+1}$
is in the mod-dropdown of $(M^\Tt_\gamma,E^\Tt_\gamma)$,
so this part follows easily from the definitions.

Part \ref{item:nu(F)_leq_kappa_<_lambda(F)}:
The fact that $\nu(F)\leq\crit(E^\Tt_\beta)<\lambda(E^\Tt_\gamma)$
is by definition,
and the fact that $\crit(E^\Tt_\beta)<\lambda(F)$ is standard.

Part \ref{item:ext_assoc_J} is as in \cite[Lemma 2.27***]{extmax}
(extended to transfinite iterations in a routine manner).
\end{proof}

\begin{dfn}
 Let $\Tt$ be a padded J-$m$-maximal tree on an $m$-sound J-premouse $M$.
 We say that $\Tt$ is \emph{nicely padded} iff
 there is an ordinal $\lambda>0$ and sequences 
$\left<\eta_\alpha\right>_{\alpha<\lambda}$,
$\left<n_\alpha\right>_{\alpha+1<\lambda}$ such that:
\begin{enumerate}
 \item $\eta_0=0$ and $\left<\eta_\alpha\right>_{\alpha<\lambda}$ is continuous
 and  $\Tt$ has length $\eta=\sup_{\alpha<\lambda}(\eta_\alpha+1)$.
 \item If $\alpha+1<\lambda$ then $n_\alpha<\om$ and
 $\eta_{\alpha+1}=\eta_\alpha+n_\alpha+1$.
 \item If $\eta_\alpha$ is $\Tt$-vs or $\alpha+1=\lambda$
 then $n_\alpha=0$.
 \item Let $\alpha+1<\lambda$ with
 $\eta_\alpha$ non-$\Tt$-vs. Then $n_\alpha>0$. Moreover, let
  $E=E^\Tt_{\eta_\alpha+n_\alpha}$.
  Then:
  \begin{enumerate}[label=--]
    \item $E\neq\emptyset$ and 
$\widetilde{\nu}^\Tt_{\eta_\alpha+n_\alpha}=\lambda(E)$.
  \item $E^\Tt_{\eta_\alpha+i}=\emptyset$
  and $\pred^\Tt(\eta_\alpha+i+1)=\eta_\alpha+i$ 
   for each $i< n_\alpha$ (so 
$M^\Tt_{\eta_\alpha}=M^\Tt_{\eta_\alpha+n_\alpha}$
and $\deg^\Tt_{\eta_\alpha}=\deg^\Tt_{\eta_\alpha+n_\alpha}$).

 \item The $\Tt$-despec-rev-e-dropdown of $(M^\Tt_{\eta_\alpha},E)$
has length $n_\alpha$; let it be
$\left<G^\Tt_{\eta_\alpha},G^\Tt_{\eta_\alpha+1},\ldots,G^\Tt_{
\eta_\alpha+n_\alpha-1}\right>$,
 so $G^\Tt_{\eta_\alpha+n_\alpha-1}=E$.
\item
$\widetilde{\nu}^\Tt_{\eta_\alpha+i}=\nu(G^\Tt_{\eta_\alpha+i})$
for each $i< n_\alpha$.\footnote{So if
$\nu(E)=\lambda(E)$ then 
$\widetilde{\nu}^\Tt_{\eta_\alpha+n_\alpha-1}=\nu(E)=\lambda(E)=
\widetilde{\nu}^\Tt_{\eta_\alpha+n_\alpha}$.
In this situation it would have been in some ways
 more natural to make $E^\Tt_{\eta_\alpha+n_\alpha-1}=E$
 instead of $E^\Tt_{\eta_\alpha+n_\alpha-1}=\emptyset$
 and $E^\Tt_{\eta_\alpha+n_\alpha}=E$,
 but for uniformity of notation it turns out simpler
 to keep the separation.}
\end{enumerate}
\end{enumerate}

Write $\eta^\Tt_\alpha=\eta_\alpha$, $n^\Tt_\alpha=n_\alpha$,
$n'^{\Tt}_\alpha=n'_\alpha$.

We say that  $(M,m,\Tt)$ is \emph{suitable}
iff $M$ is an $m$-sound J-premouse and $\Tt$ is
a nicely padded J-$m$-maximal tree on $M$.
\end{dfn}

\begin{dfn}
 Let $(M,m,\Tt)$ be suitable.
 Let $\alpha+1<\lh(\Tt)$ with $E^\Tt_\alpha\neq\emptyset$,
 so $\alpha=\eta_\xi+n_\xi$ for some $\xi$ (with notation as above).
If $\alpha$ is $\Tt$-vs and $\beta$ is least 
witnessing that $\alpha$ is $\Tt$-special,
then let $\widetilde{\alpha}$
denote $\pred^\Tt(\beta+1)-1$ (by Lemma \ref{lem:organization_specials} below,
 $\pred^\Tt(\beta+1)=\eta_\chi+i+1$ for some $\chi,i$),
and let $\core(E^\Tt_\alpha)$
denote $F(M^{*\Tt}_{\beta+1})$ (also
by that lemma,  $F(M^{*\Tt}_{\beta+1})=G^\Tt_{\eta_\chi+i}$).

If $\alpha$ is non-$\Tt$-vs (so $n_\xi>0$) then let 
$\widetilde{\alpha}=\eta_\xi+n_\xi-1$
and  
$\core(E^\Tt_\alpha)=E^\Tt_\alpha=G^\Tt_{\widetilde{\alpha}}$).
\end{dfn}

We make some basic observations on the interaction between
$\Tt$-special ordinals and nice pudding:
\begin{lem}\label{lem:organization_specials}
Let $(M,m,\Tt)$ be suitable and adopt notation as above.
Let $\eta+1<\lh(\Tt)$ with $E^\Tt_\eta\neq\emptyset$.
Then:
\begin{enumerate}
 \item Suppose $\pred^\Tt(\eta+1)=\eta_\alpha+i+1$. Then:
 \begin{enumerate}
\item $M^{*\Tt}_{\eta+1}\ins M^\Tt_{\eta_\alpha}|\lh(G^\Tt_{\eta_\alpha+i})$.
\item  $\eta+1$ is $\Tt$-special iff 
$M^{*\Tt}_{\eta+1}=M^\Tt_{\eta_\alpha}|\lh(G^\Tt_{\eta_\alpha+i})$.
\item If $\eta+1$ is $\Tt$-special
then:
\begin{enumerate}[label=--]
\item $\core(F(M^\Tt_{\eta+1}))=G^\Tt_{\eta_\alpha+i}$.
\item If  $\eta+1\notin\dropset^\Tt$
then $i=0$ and $G^\Tt_{\eta_\alpha}=F(M^\Tt_{\eta_\alpha})$
and $\eta_\alpha$ is non-$\Tt$-special.
\end{enumerate}
\end{enumerate}
\item Suppose $\pred^\Tt(\eta+1)=\eta_\alpha$. Then:
\begin{enumerate}
\item The following are equivalent:
\begin{enumerate}[label=(\roman*)]
\item\label{item:equiv_i} $\eta+1$ is $\Tt$-special, 
\item\label{item:equiv_ii} $\eta+1\notin\dropset^\Tt$
and $M^\Tt_{\eta_\alpha}$
is active and $\nu(F(M^\Tt_{\eta_\alpha}))\leq\crit(E^\Tt_\eta)$,
\item $\eta_\alpha$ is $\Tt$-special and $\eta+1\notin\dropset^\Tt$.
\end{enumerate}
\item If $\eta+1$ is $\Tt$-special
then $\core(F(M^\Tt_{\eta+1}))=\core(F(M^\Tt_{\eta_\alpha}))$.
\end{enumerate}
\end{enumerate}
\end{lem}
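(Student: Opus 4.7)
The plan is to unpack the definitions of $\Tt$-special, $\Tt$-vs, and nice padding, and track how $\pred^\Tt(\eta+1)$ and $M^{*\Tt}_{\eta+1}$ interact with the despec-rev-e-dropdown extenders $G^\Tt_\beta$ and with active extenders propagated along the branch to $\eta+1$. The two main tools are Lemma~\ref{lem:rev-e-dd_seg} (the interleaving $\nu(G_i)<\lh(G_i)\leq\nu(G_{i+1})$ along a rev-e-dropdown, together with its behavior under ultrapowers) and Lemma~\ref{lem:core_appears_J} (the identification of $\core(F(M^\Tt_\alpha))$ and the presentation of $F(M^\Tt_\alpha)$ as a composition of its core with the successor-step extenders along the relevant non-dropping branch segment).

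For Part~(1), assume $\pred^\Tt(\eta+1)=\eta_\alpha+i+1$. The J-rule combined with the nice-padding convention $\widetilde{\nu}^\Tt_{\eta_\alpha+j}=\nu(G^\Tt_{\eta_\alpha+j})$ for $j<n_\alpha$ (and $=\lambda(E^\Tt_{\eta_\alpha+n_\alpha})$ at $j=n_\alpha$) pins $\crit(E^\Tt_\eta)$ between $\nu(G^\Tt_{\eta_\alpha+i})$ and $\widetilde{\nu}^\Tt_{\eta_\alpha+i+1}$. Pad positions fix the model, so $M^\Tt_{\eta_\alpha+i+1}=M^\Tt_{\eta_\alpha}$, and Lemma~\ref{lem:rev-e-dd_seg} shows $M^{*\Tt}_{\eta+1}$ is bounded by $M^\Tt_{\eta_\alpha}|\lh(G^\Tt_{\eta_\alpha+i})$, giving~(a). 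For~(b), take the least witness $\beta$ to $\Tt$-specialness of $\eta+1$: either $\beta=\eta$ (so $M^{*\Tt}_{\eta+1}$ itself is active with $\nu(F(M^{*\Tt}_{\eta+1}))\leq\crit(E^\Tt_\eta)$) or $\beta<\eta$ (in which case Lemma~\ref{lem:core_appears_J}(1) propagates the $\nu$-bound along the non-dropping branch); either way maximality forces equality in~(a), and $\core(F(M^\Tt_{\eta+1}))=G^\Tt_{\eta_\alpha+i}$. For~(c), non-dropping at $\eta+1$ gives $M^{*\Tt}_{\eta+1}=M^\Tt_{\eta_\alpha}$, forcing $G^\Tt_{\eta_\alpha+i}=F(M^\Tt_{\eta_\alpha})$; since $F(M^\Tt_{\eta_\alpha})$ occupies position~$0$ of the full rev-e-dropdown and is retained by despec precisely when $\eta_\alpha$ is non-$\Tt$-special, we conclude $i=0$ and $\eta_\alpha$ is non-$\Tt$-special.

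For Part~(2), assume $\pred^\Tt(\eta+1)=\eta_\alpha$. The implication (ii)$\Rightarrow$(i) uses $\beta=\eta$ as witness. For (i)$\Rightarrow$(iii), the non-dropping witness interval gives $\eta+1\notin\dropset^\Tt$ immediately; if $\beta<\eta$ then $\eta_\alpha\in(\beta+1,\eta+1]_\Tt$ so the same $\beta$ witnesses $\eta_\alpha$ special, while if $\beta=\eta$ we conclude $\eta_\alpha$ special by induction at an earlier branch stage. The crux is (iii)$\Rightarrow$(ii): by Lemma~\ref{lem:core_appears_J}(4), $F(M^\Tt_{\eta_\alpha})$ is the composition of its core with successor-step extenders along the branch, ending (for successor $\alpha$) at $E^\Tt_{\eta_{\alpha-1}+n_{\alpha-1}}$; a standard J-indexed composition computation yields $\nu(F(M^\Tt_{\eta_\alpha}))\leq\lambda(E^\Tt_{\eta_{\alpha-1}+n_{\alpha-1}})=\widetilde{\nu}^\Tt_{\eta_{\alpha-1}+n_{\alpha-1}}\leq\crit(E^\Tt_\eta)$, the last inequality by minimality of $\eta_\alpha=\pred^\Tt(\eta+1)$; limit $\alpha$ is handled by continuity of $\langle\eta_\xi\rangle$ and of $F$ along the branch. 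Part~(2b) follows because under~(iii) the least witness for $\eta+1$ also witnesses $\eta_\alpha$, so the cores coincide.

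The main obstacle is the (iii)$\Rightarrow$(ii) step of Part~(2): extracting $\nu(F(M^\Tt_{\eta_\alpha}))\leq\crit(E^\Tt_\eta)$ requires the standard composition bound for J-indexed extenders along a non-dropping special branch (where each added extender has critical point above the previous $\lambda$), and the limit case needs care with continuity. The remainder is essentially bookkeeping between tree positions, dropdown indices, and branch relations.
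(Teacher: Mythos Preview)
Your treatment of Part~1 is correct and in line with the paper's intentions. For Part~2, your proof is essentially correct but routes the equivalence differently from the paper, and one step is underspecified.

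The paper establishes (i)$\iff$(ii) directly from the definitions and then proves (ii)$\Rightarrow$(iii) by a short contradiction that exploits the nicely-padded bookkeeping: assuming (ii) and that $\eta_\alpha$ is non-$\Tt$-special, $F=F(M^\Tt_{\eta_\alpha})$ lies in the rev-e-dropdown of $(M^\Tt_{\eta_\alpha},E^\Tt_{\eta_\alpha+n_\alpha})$; since non-special means despec equals full, $G^\Tt_{\eta_\alpha}=F$, hence $\widetilde{\nu}^\Tt_{\eta_\alpha}=\nu(F)\leq\crit(E^\Tt_\eta)$, contradicting $\pred^\Tt(\eta+1)=\eta_\alpha$. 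This one-line use of the $G$-sequence is the paper's entire content here. You instead identify (iii)$\Rightarrow$(ii) as the crux and obtain the bound $\nu(F(M^\Tt_{\eta_\alpha}))\leq\crit(E^\Tt_\eta)$ by invoking the composition description of Lemma~\ref{lem:core_appears_J} part~\ref{item:ext_assoc_J} together with the fact that $\nu(F^U)=\nu(E)$ under such compositions. This works, but it is heavier machinery than needed: the paper's argument never computes $\nu(F(M^\Tt_{\eta_\alpha}))$ at all, it just observes where $F$ must sit in the despec sequence.

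There is also a soft spot in your (i)$\Rightarrow$(iii): you assert ``the non-dropping witness interval gives $\eta+1\notin\dropset^\Tt$ immediately'', but if the least witness is $\beta=\eta$ then $(\beta+1,\eta+1]_\Tt=\emptyset$ and nothing is given; your fallback ``by induction at an earlier branch stage'' has no induction set up to appeal to. The paper sidesteps this by folding the non-dropping claim into the definitional equivalence (i)$\iff$(ii); to make your route work you would need to argue directly that with $\pred^\Tt(\eta+1)=\eta_\alpha$ (rather than $\eta_\alpha+i+1$), a drop at $\eta+1$ with $M^{*\Tt}_{\eta+1}$ active and $\nu(F(M^{*\Tt}_{\eta+1}))\leq\crit(E^\Tt_\eta)$ is ruled out by the exchange-ordinal and exit constraints at $\eta_\alpha$.
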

\begin{proof}[Proof sketch] Suppose $\pred^\Tt(\eta+1)=\eta_\alpha$.
A short unravelling of definitions shows \ref{item:equiv_i} $\Leftrightarrow$ 
\ref{item:equiv_ii}.
Suppose $\eta+1$
is $\Tt$-special; let us observe that $\eta_\alpha$
is $\Tt$-special. Let $F=F(M^\Tt_{\eta_\alpha})$.
By \ref{item:equiv_ii}, 
$\eta+1\notin\dropset^\Tt$ and $F\neq\emptyset$
and $\nu(F)\leq\crit(E^\Tt_\eta)$.
It follows easily that
$F$ is in the rev-e-dropdown
of $(M^\Tt_{\eta_\alpha},E^\Tt_{\eta_\alpha})$.
But then if $\eta_\alpha$ is non-$\Tt$-special,
the $\Tt$-despec-rev-e-dropdown is just the rev-e-dropdown,
so
$G^\Tt_{\eta_\alpha}=F$,
so $\widetilde{\nu}^\Tt_{\eta_\alpha}=\nu(F)\leq\crit(E^\Tt_\eta)$,
so $\pred^\Tt(\eta+1)>\eta_\alpha$, contradiction.\end{proof}

\begin{dfn}
Let $(M,m,\Tt)$ be suitable.
For $\gamma+1,\alpha+1<\lh(\Tt)$ such that
$E^\Tt_\gamma\neq\emptyset\neq E^\Tt_\alpha$,
write $\gamma\leq^{\ext,\Tt}_{\direct}\alpha$
iff either $\gamma=\alpha$ or $\alpha$ is $\Tt$-vs
and $\gamma+1\in(\widetilde{\alpha}+1,\alpha]_\Tt$.
Let ${\leq^{\ext,\Tt}}$ be the transitive closure
of ${\leq^{\ext,\Tt}}$.
Let ${<^{\ext,\Tt}_{\direct}}$ and ${<^{\ext,\Tt}}$
be the strict parts.

The \emph{standard decomposition} of $E^\Tt_\alpha$
 is the enumeration 
 of
 \[\{\core(E^\Tt_\beta)\bigm|\beta\leq^{\ext,\Tt}\alpha\},\]
 in order of increasing critical point.
\end{dfn}

 Note here that if $\beta_0<^{\ext,\Tt}\beta_1$ then
 $\beta_0<\beta_1$ but 
$\crit(\core(E^\Tt_{\beta_1}))<\crit(\core(E^\Tt_{\beta_0}))$.

\begin{lem}
 Let $(M,m,\Tt)$ be suitable and $\alpha+1<\lh(\Tt)$.
 
 The standard decomposition of $E^\Tt_\alpha$ 
 is well-defined. That is, if $\beta_0,\beta_1\leq^{\ext,\Tt}\alpha$
 and $\beta_0\neq\beta_1$ then $\kappa_{\beta_0}\neq\kappa_{\beta_1}$
 where $\kappa_{\beta_i}=\crit(\core(E^\Tt_{\beta_i}))=\crit(E^\Tt_{\beta_i})$;
 moreover, if $\beta_0=\alpha$ then $\kappa_{\beta_0}<\kappa_{\beta_1}$,
 and if $\kappa_{\beta_0}<\kappa_{\beta_1}$
 then $\nu(\core(E^\Tt_{\beta_0}))\leq\kappa_{\beta_1}$.

 Further,  $E^\Tt_\alpha$ is equivalent to composition of the
 extenders in the
  standard decomposition
  of
  $E^\Tt_\alpha$ (in order of increasing critical point).
\end{lem}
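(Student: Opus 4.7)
The plan is to proceed by induction on $\alpha$ (noting that $<^{\ext,\Tt}$-predecessors of $\alpha$ lie strictly below $\alpha$, so ordinary ordinal induction suffices). Simultaneously, I will maintain the stronger inductive hypothesis: for every $\alpha'<\lh(\Tt)$ with $E^\Tt_{\alpha'}\neq\emptyset$, not only do the three bullets in the statement hold for the standard decomposition of $E^\Tt_{\alpha'}$, but moreover $\crit(\core(E^\Tt_{\alpha'}))=\crit(E^\Tt_{\alpha'})$, and the maximum critical point in the standard decomposition is strictly less than $\nu(E^\Tt_{\alpha'})$. These auxiliary facts are what will let the inductive step glue sub-decompositions together cleanly.

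For the base case, if $\alpha$ is not $\Tt$-vs then by definition $\alpha$ has no $<^{\ext,\Tt}_\direct$-predecessor other than itself, so the standard decomposition is just $\langle\core(E^\Tt_\alpha)\rangle = \langle E^\Tt_\alpha\rangle$, and every claim is trivial. For the inductive step, suppose $\alpha$ is $\Tt$-vs with least witness $\beta$, set $F=\core(E^\Tt_\alpha)=\core(F(M^\Tt_\alpha))=F(M^{*\Tt}_{\beta+1})$, and let $\widetilde{\alpha}+1=\pred^\Tt(\beta+1)$. Then the direct $<^{\ext,\Tt}_\direct$-predecessors of $\alpha$ other than $\alpha$ itself are precisely the $\delta$'s with $\delta+1\in[\beta+1,\alpha]_\Tt$, enumerated in order of increasing $\delta$ as $\delta_0=\beta<\delta_1<\ldots$. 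By Lemma \ref{lem:core_appears_J}(\ref{item:ext_assoc_J}), $E^\Tt_\alpha$ is equivalent to the composition $\langle F\rangle\conc\langle E^\Tt_{\delta_i}\rangle_i$, with $\lambda(E^\Tt_{\delta_i})\leq\crit(E^\Tt_{\delta_{i+1}})$, and by Lemma \ref{lem:core_appears_J}(\ref{item:nu(F)_leq_kappa_<_lambda(F)}), $\nu(F)\leq\crit(E^\Tt_{\delta_0})$. Applying the induction hypothesis to each $E^\Tt_{\delta_i}$, I get a well-defined standard decomposition for each, and the transitivity of $\leq^{\ext,\Tt}$ tells me that the standard decomposition of $E^\Tt_\alpha$ is obtained by concatenating $F$ with these sub-decompositions in order.

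The core bookkeeping is then to verify the strict monotonicity of critical points across the concatenation. Within each sub-decomposition, strict increase is given by induction. At the initial juncture, $\crit(F)<\nu(F)\leq\crit(E^\Tt_\beta)$, and by induction $\crit(E^\Tt_\beta)=\crit(\core(E^\Tt_{\delta_0}))$, which is the smallest $\crit$ in the decomposition of $E^\Tt_{\delta_0}$. At a juncture between the decompositions of $E^\Tt_{\delta_i}$ and $E^\Tt_{\delta_{i+1}}$, the max of the first is, by induction, strictly less than $\nu(E^\Tt_{\delta_i})\leq\lambda(E^\Tt_{\delta_i})\leq\crit(E^\Tt_{\delta_{i+1}})$. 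Hence critical points strictly increase throughout, giving well-definedness, distinctness, and the fact that $\kappa_\alpha$ is the minimum. The $\nu$-bound $\nu(\core(E^\Tt_{\beta_0}))\leq\kappa_{\beta_1}$ whenever $\kappa_{\beta_0}<\kappa_{\beta_1}$ is then read off the same case analysis: within a single sub-decomposition, use the induction hypothesis on $E^\Tt_{\delta_i}$; across sub-decompositions, use $\nu(E^\Tt_{\delta_i})\leq\lambda(E^\Tt_{\delta_i})\leq\crit(E^\Tt_{\delta_{i+1}})$; and for $\beta_0=\alpha$, use $\nu(F)\leq\crit(E^\Tt_{\delta_0})$ combined with the fact that all remaining $\kappa$'s are $\geq\crit(E^\Tt_{\delta_0})$.

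The equivalence of $E^\Tt_\alpha$ with the composition of its standard decomposition then follows by substituting each $E^\Tt_{\delta_i}$ in the composition supplied by Lemma \ref{lem:core_appears_J}(\ref{item:ext_assoc_J}) with the composition produced by the inductive hypothesis; the critical point bounds just verified are exactly what is needed for these substitutions to make sense. The main obstacle is not any single technical inequality but the bookkeeping of keeping straight the order of extenders across nested layers of specialness; once one commits to the auxiliary hypothesis that $\crit(\core(E^\Tt_\gamma))=\crit(E^\Tt_\gamma)$ and that the top $\crit$ of any standard decomposition is bounded by the $\nu$ of the extender it decomposes, the inductive step reduces to invoking Lemma \ref{lem:core_appears_J} and concatenating.
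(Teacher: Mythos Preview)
Your proposal is correct and follows essentially the same approach as the paper's own proof, which is simply ``By induction using Lemma~\ref{lem:core_appears_J}, again as in \cite[Lemma 2.27]{extmax}.'' You have spelled out in detail exactly the induction the paper has in mind: unwind the direct $<^{\ext,\Tt}_{\direct}$-predecessors of a $\Tt$-vs $\alpha$ via Lemma~\ref{lem:core_appears_J}, apply the inductive hypothesis to each $E^\Tt_{\delta_i}$, and glue using the inequalities $\nu(F)\leq\crit(E^\Tt_{\delta_0})$ and $\lambda(E^\Tt_{\delta_i})\leq\crit(E^\Tt_{\delta_{i+1}})$ supplied by that lemma. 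One small refinement: since the branch $(\widetilde{\alpha}+1,\alpha]_\Tt$ can be transfinite, your auxiliary hypothesis should read ``the \emph{supremum} of critical points in the standard decomposition is $\leq\nu(E^\Tt_{\alpha'})$'' rather than ``the maximum is strictly less than''; with that adjustment the juncture argument goes through unchanged, and this is exactly the ``extended to transfinite iterations in a routine manner'' the paper alludes to.
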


\begin{proof}
 By induction using Lemma \ref{lem:core_appears_J},
again as in \cite[Lemma 2.27***]{extmax}.
\end{proof}

\begin{dfn}
 Let $(M,m,\Tt)$ be suitable.
 Let $\alpha<\lh(\Tt)$.
Given $\gamma\leq^\Tt\alpha$,
 $\vec{E}^\Tt_{\gamma\alpha}$ denotes the sequence
$\left<E^\Tt_\beta\right>_{\gamma<^\Tt\beta+1\leq^\Tt\alpha}$
(so $\vec{E}^\Tt_{\gamma\alpha}$ corresponds to $i^\Tt_{\gamma\alpha}$
when the latter exists),
and $\vec{F}^\Tt_{\gamma\alpha}$ denotes
the sequence 
$\left<E^\Tt_\beta\right>_{\beta+1\in[\xi+1,\alpha]
_\Tt}$,
where $\xi$ is least such that
$\gamma<^\Tt\xi+1\leq^\Tt\alpha$ and $(\xi+1,\alpha]_\Tt$
does not drop in model.
Write $\vec{E}^\Tt_{<\alpha}=\vec{E}^\Tt_{0\alpha}$
and $\vec{F}^\Tt_{<\alpha}=\vec{F}^\Tt_{0\alpha}$.

Given a sequence $\vec{E}=\left<E_\alpha\right>_{\alpha<\lambda}$
of short extenders, we define
$\left<U_\alpha,k_\alpha\right>_{\alpha\leq\lambda}$,
if possible, by induction on $\lambda$, 
as follows. Set $U_0=M$ and $k_0=k$.
Given $U_\eta$  and $k_\eta\leq\om$
are well-defined and $U_\eta$ is a $k_\eta$-sound
seg-pm
and $\eta<\lambda$,
then: if $\crit(E_\eta)<\OR(U_\eta)$
and there is $N\ins U_\eta$ such that $E_\eta$ measures
exactly $\pow([\kappa]^{<\om})\inter N$,
then letting $N\ins U_\eta$ be the largest such,
and letting $n\leq\om$ be largest such that $(N,n)\ins(U_\eta,k_\eta)$
and $\crit(E_\eta)<\rho_{n}^N$,
if such $n$ exists,\footnote{It might be
that $N=U_\eta$ is a type 3 MS-premouse
with $\crit(E_\eta)=\lgcd(U_\eta)$,
in which case $n$ does not exist.}
 then $U_{\eta+1}=\Ult_{n}(N,E_\eta)$ and $k_{\eta+1}=n$.
We say there is a \emph{drop in model} 
at $\eta+1$ iff $N\pins U_\eta$.
Given a limit $\eta$ such that $U_\alpha$ is well-defined
for each $\alpha<\eta$,
then $U_\eta$ is well-defined iff there are only finitely many drops in model
${<\eta}$, and then $U_\eta$ is the resulting direct limit
and $k_\eta=\lim_{\alpha<\eta}k_\alpha$.
We  define $\Ult_{k}(M,\vec{E})=U_\lambda$,
if this is well-defined,
and if so, and there is no drop in model,
we define the \emph{iteration map} $i^{M,k}_{\vec{E}}:M\to U_\lambda$ 
resulting naturally from the ultrapower maps.
Also, if there is no drop in model,
or the only drop in model occurs at $1$,
then define $\bar{i}^{M,k}_{\vec{E}}:N\to U_\lambda$ where $N\ins M$
is as above.
\end{dfn}

\begin{rem}
Let $(M,m,\Tt)$ be suitable and $\beta\leq^\Tt\alpha<\lh(\Tt)$.
Then clearly
 $M^\Tt_\alpha=\Ult_{k}(M^\Tt_\beta,\vec{E}^\Tt_{\beta\alpha})$
 and 
$i^\Tt_{\beta\alpha}=i^{M^\Tt_\beta,k}
_{\vec{E}^\Tt_{\beta\alpha}}$ 
where $k=\deg^\Tt_\beta$ (with one map defined iff the other is),
and likewise 
$i^{*\Tt}_{\gamma+1,\alpha}=\bar{i}^{M^\Tt_\beta,k}_{\vec{E}^\Tt_{\beta\alpha}}
$ when $\pred^\Tt(\gamma+1)=\beta$.
\end{rem}

\begin{dfn}
 Let $(M,m,\Tt)$ be suitable.

 We say that $\Tt$ is \emph{unravelled}
 iff, if $\lh(\Tt)=\alpha+1$ then $\alpha$ is non-$\Tt$-special.
 The \emph{unravelling} $\unrvl(\Tt)$ of $\Tt$
 is the unique unravelled J-$m$-maximal tree $\Ss$ on $M$,
 if it exists, such that (i) $\Tt\ins\Ss$, (ii) if $\lh(\Tt)$ is a limit then 
$\Tt=\Ss$, and (iii) if $\lh(\Tt)=\alpha+1$ then $\beta$ is $\Ss$-vs 
for each $\beta+1<\lh(\Ss)$
with $\alpha\leq\beta$. (So
$E^\Ss_\beta=F(M^\Ss_\beta)$ for each $\beta\geq\alpha$,
and $\crit(E^\Ss_{\alpha+i+1})<\crit(E^\Ss_{\alpha+i})$,
so $\lh(\Ss)<\lh(\Tt)+\om$; the existence of $\Ss$ just depends on the 
wellfoundedness of the resulting models.)

We say that $\Tt$ is \emph{everywhere unravelable}
iff  (i) $\unrvl(\Tt\rest(\eta^\Tt_\alpha+1))$ exists 
for every 
$\alpha<\lambda^\Tt$, and (ii) for each $\alpha+1<\lambda^\Tt$ and 
$i<n^{\Tt}_\alpha$,
 letting \[\Ww=(\Tt\rest(\eta_\alpha+i+2))\conc 
G^\Tt_{\eta_\alpha+i}\] (a nicely padded
putative J-$m$-maximal tree)\footnote{That is,
it satisfies all the requirements
of a nicely padded J-$m$-maximal tree
excluding the wellfoundedness of the last model.
Note that we use $\Tt\rest(\eta_\alpha+i+2)$
(which has last model $M^\Tt_{\eta_\alpha+i+1}$)
as opposed to $\Tt\rest(\eta_\alpha+i+1)$ or
even $\Tt\rest(\eta_\alpha+1)$, because
although $G=G^\Tt_{\eta_\alpha+i}\in\es_+(M^\Tt_{\eta_\alpha})=
\es_+(M^\Tt_{\eta_\alpha+i})$,
using exit extender $G$
at stage $\eta_\alpha+i$ in $\Tt'$ would not
give a nicely padded tree $\Tt'$. By
Lemma \ref{lem:rev-e-dd_seg}, we do get a nicely padded 
tree by using it at stage $\eta_\alpha+i+1$.},
$\Ww$ has wellfounded models (so is actually
a nicely padded J-$m$-maximal tree) and $\unrvl(\Ww)$ exists
(another such tree).\footnote{When $\alpha'=\alpha+1$,
clause (i) for $\alpha'$ replacing $\alpha$
is just the same as the instance of clause (ii)
with  $\alpha$ and $i=n^\Tt_\alpha-1$, but clause
(i) for limit $\alpha$ is not covered by clause (ii).}
\end{dfn}

\begin{dfn}\label{dfn:conversion_J}
 Let $(M,m,\Tt)$ be suitable
 with  $\Tt$ unravelled
 and everywhere unravelable.
 The \emph{conversion} $\conv(\Tt)$
 is the unique padded lifting-MS-$m$-maximal tree $\Uu$
 on $M$, with exchange ordinals $\widetilde{\nu}^\Uu_\alpha$,
 satisfying (we verify later that this works, writing
 $\lambda=\lambda^\Tt$, $\eta_\alpha=\eta^\Tt_\alpha$, etc):
 \begin{enumerate}
  \item\label{item:tree_length_exchange_ord_agmt_J}$\lh(\Uu)=\lh(\Tt)$ 
and 
$\widetilde{\nu}^\Uu_\gamma=\widetilde{\nu}^\Tt_\gamma$
for all $\gamma+1<\lh(\Tt)$.
  \item If $\alpha+1<\lambda$ and $\eta_\alpha$ is $\Tt$-vs (so 
$n_\alpha=0$)
  then $E^\Uu_{\eta_\alpha}=\emptyset$,
  $\pred^\Uu(\eta_{\alpha+1})=\eta_\alpha$.
  \item If $\alpha+1<\lambda$ and $\eta_\alpha$
is non-$\Tt$-vs then $E^\Uu_{\eta_\alpha+i}=G^\Tt_{\eta_\alpha+i}$
  for all $i< n_\alpha$,
  and $E^\Uu_{\eta_\alpha+n_\alpha}=\emptyset$,
  with $\pred^\Uu(\eta_{\alpha+1})=\eta_\alpha+n_\alpha$.\footnote{So note
  $E^\Uu_{\eta_\alpha+n_\alpha-1}=E^\Tt_{\eta_\alpha+n_\alpha}$.}
  \item\label{item:conversion_limits_J} Let $\eta<\lh(\Tt)$ be a limit.
  Fix $\gamma<^\Tt\eta$
  such that $(\gamma,\eta)_\Tt\inter\dropset^\Tt=\emptyset$
  and if any $\xi\in(\gamma,\eta)_\Tt$
  is $\Tt$-special then $\gamma$ is $\Tt$-special.
  Let $X$ be the set of all $\beta<\eta$ such that 
$\beta\leq^{\ext,\Tt}\alpha$ for some
$\alpha+1\in(\gamma,\eta)_\Tt$ with $E^\Tt_\alpha\neq\emptyset$.
Then:
\begin{enumerate}
\item $\{\widetilde{\beta}+1\bigm|\beta\in X\}$ is cofinal in 
$\eta$;
 \item 
 $E^\Uu_{\widetilde{\beta}}=\core(E^\Tt_{\beta})$
 for each $\beta\in X$;
\item for all $\beta_0,\beta_1\in X$,
either $\widetilde{\beta_0}+1\leq^\Uu\widetilde{\beta_1}+1$ or
vice versa;
\item  $[0,\eta)_\Uu$ is the
$\leq^\Uu$-downward closure
of $\{\widetilde{\beta}+1\bigm|\beta\in X\}$.\qedhere
\end{enumerate}  
 \end{enumerate}
\end{dfn}

\begin{lem}\label{lem:conversion_exists_J}
  Let $(M,m,\Tt)$ be suitable with $\Tt$ unravelled and everywhere 
unravelable, and $\Tt$ non-trivial. Write $\lambda=\lambda^\Tt$ etc. Then:
\begin{enumerate}
 \item $\Uu=\conv(\Tt)$ is a well-defined padded MS-lifting-$m$-maximal
 tree on $M$.
\item Suppose $\lh(\Tt)=\eta+1$ (so $\eta$
is non-$\Tt$-special).
Let  $\varepsilon^\Tt+1\leq^\Tt\eta$ be least such that
$(\varepsilon^\Tt+1,\eta]_\Tt\inter\dropset^\Tt=\emptyset$,
and $\varepsilon^\Uu$ likewise for $\Uu$. Then (and let $\delta,N^*,k$ be 
defined 
by):
\begin{enumerate}
 \item $M^\Uu_\eta=M^\Tt_\eta$,
\item $[0,\eta]_\Tt\inter\dropset^\Tt=\emptyset\iff [0,\eta]
_\Uu\inter\dropset^\Uu=\emptyset$.
\item $\delta=\pred^\Tt(\varepsilon^\Tt+1)=\pred^\Uu(\varepsilon^\Uu+1)$,
\item  
$N^*=M^{*\Uu}_{\varepsilon^\Uu+1}=M^{*\Tt}_{\varepsilon^\Tt+1}$,
\item $\vec{F}^\Uu_{<\eta}$ is equivalent to 
$\vec{F}^\Tt_{<\eta}$, and in fact,
$\vec{F}^\Uu_{<\eta}$ is the enumeration of
\[\{\core(E^\Tt_\beta)\bigm|\exists\gamma\ 
\big[\varepsilon^\Tt+1\leq^\Tt\gamma+1\leq^\Tt\eta\text{ and } 
\beta\leq^{\exit,\Tt}\gamma\big]\} \]
in order of increasing critical point,
\item  $k=\deg^\Tt(\eta)=\deg^\Uu(\eta)$,
\item 
$M^\Tt_\eta=\Ult_{k}(N^*,\vec{F}^\Tt_{<\eta})=\Ult_{k}(N^*,\vec{F}^\Uu_{<\eta}
)=M^\Tt_\eta$,
\item $i^{*\Uu}_{\varepsilon^\Uu+1,\eta}=i^{*\Tt}_{\varepsilon^\Tt+1,\alpha}$.
\end{enumerate}

\item\label{item:unravelled_insegs} We have:
\begin{enumerate}
\item For  $\alpha<\lambda$,
\[\conv(\unrvl(\Tt\rest\eta_\alpha+1))=(\Uu\rest\eta_\alpha+1)\conc(\emptyset,
\ldots,\emptyset)
\]
(where $(\Uu\rest\eta_\alpha+1)\conc(\emptyset,\ldots,\emptyset)$
is an extension of $\Uu\rest\eta_\alpha+1$ by just padding;
the extension is finitely long), and
\item For each $\alpha+1<\lambda$ and $i< n_\alpha$,
\[ \conv\Big(\unrvl\Big((\Tt\rest\eta_\alpha+i+2)\conc 
G^\Tt_{\eta_\alpha+i})\Big)\Big)=
 (\Uu\rest\eta_\alpha+i+2)\conc(\emptyset,\ldots,\emptyset).
\]
\end{enumerate}
\item\label{item:comparability_J} Let $\eta+1<\lh(\Tt)$ with 
$E^\Tt_\eta\neq\emptyset$ and 
$X=\{\beta\bigm|
\beta\leq^{\ext,\Tt}\eta\}$.
Then for each $\beta\in X$,
we have $E^\Uu_{\widetilde{\beta}}=\core(E^\Tt_\beta)$,
and for all $\beta_0,\beta_1\in X$, if 
$\crit(E^\Tt_{\beta_0})\leq\crit(E^\Tt_{\beta_1})$
then
$\widetilde{\beta_0}+1\leq^\Uu\widetilde{\beta_1}+1\leq^\Uu\eta+1$
and
$(\widetilde{\beta_0}+1,\eta+1]
_\Uu\inter\dropset^\Uu_{\deg}=\emptyset$.
\item\label{item:iterated_comparability_J} Let $\eta+1<^\Tt\eta'+1<\lh(\Tt)$
be such that $E^\Tt_\eta\neq\emptyset\neq E^\Tt_{\eta'}$
and $\eta'+1$ is non-$\Tt$-special
and $(\eta+1,\eta'+1]_\Tt$ does not drop in model.
Let  $\beta\leq^{\ext,\Tt}\eta$
and $\beta'\leq^{\ext,\Tt}\eta'$.
Then:
\begin{enumerate}[label=--]
\item $\widetilde{\eta}+1\leq^\Uu\widetilde{\beta}
+1\leq^\Uu\eta+1\leq^\Uu
\widetilde{\eta'}+1\leq^\Uu\widetilde{\beta'}+1\leq^\Uu\eta'+1$,
\item $(\widetilde{\eta}+1,\eta'+1]_\Uu\inter\dropset^\Uu=\emptyset$, and
\item if
$(\eta+1,\eta'+1]_\Tt\inter\dropset_{\deg}^\Tt=\emptyset$
then 
$(\widetilde{\eta}+1,\eta'+1]_\Uu\inter\dropset_{\deg}^\Uu=\emptyset$.
\end{enumerate}
\end{enumerate}
\end{lem}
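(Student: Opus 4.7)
My plan is to prove parts (1) and (2) by simultaneous induction on $\lh(\Tt)$, with parts (3)--(5) recovered as byproducts of the induction. The induction step reduces either to appending one more extender or to taking a limit. The limit case for (1) will follow essentially by reading off clause \ref{item:conversion_limits_J} of Definition \ref{dfn:conversion_J}; I will need to verify that the prescribed cofinal set $\{\widetilde{\beta}+1 \mid \beta\in X\}$ is genuinely cofinal in $\eta$ and linearly $\leq^\Uu$-ordered, but both of these will fall out of the inductive form of part (5) applied to smaller stages. The bulk of the work, and the source of the lemma's length, lies in the successor step.

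For a successor step at stage $\xi$, the analysis bifurcates according to the block structure. If $\xi = \eta_\alpha + i$ with $i < n_\alpha$, then on the $\Tt$-side $E^\Tt_\xi$ is padding while on the $\Uu$-side I will set $E^\Uu_\xi = G^\Tt_\xi$; the lifting-MS-rule then prescribes $\pred^\Uu(\xi+1)$ via $\widetilde{\nu}^\Uu$, and I must check wellfoundedness of the resulting ultrapower, which is exactly what the \emph{everywhere unravelability} hypothesis buys via the auxiliary trees $\Ww = (\Tt\rest\eta_\alpha+i+2)\conc G^\Tt_{\eta_\alpha+i}$ appearing in its statement. If instead $\xi = \eta_\alpha$ is $\Tt$-vs, then on the $\Uu$-side I place padding at $\xi$ and set $\pred^\Uu(\eta_{\alpha+1}) = \eta_\alpha$. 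The reason this reproduces the action of $F(M^\Tt_\xi)$ on the $\Tt$-side is Lemma \ref{lem:core_appears_J}(\ref{item:ext_assoc_J}): $F(M^\Tt_\xi)$ is equivalent to the composition of $\core(F(M^\Tt_\xi))$ followed by the extenders along $[\beta+1,\xi]_\Tt$, and these are precisely the $\Uu$-extenders $E^\Uu_{\widetilde{\delta}}$ already placed along the $\Uu$-branch by the inductive hypothesis. The equalities in part (2) at the newly-appended stage will then follow by combining Lemma \ref{lem:core_appears_J}(\ref{item:ext_assoc_J}) with the standard associativity of iterated ultrapowers.

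For parts (4) and (5), I will extract part (4) from the inductive successor analysis: the standard decomposition of $E^\Tt_\eta$ enumerates $\{\core(E^\Tt_\beta) \mid \beta\leq^{\ext,\Tt}\eta\}$ in order of increasing critical point, and this is exactly the order in which the corresponding $E^\Uu_{\widetilde{\beta}}$ stack along the $\Uu$-branch to $\eta+1$; Lemma \ref{lem:organization_specials} will be used both to locate $\Uu$-predecessors and to rule out spurious drops. Part (5) should then follow by applying part (4) separately to the two blocks ending at $\eta+1$ and $\eta'+1$ and concatenating. For part (3), I will observe that running $\conv$ on $\unrvl(\Tt\rest\eta_\alpha+1)$ adds only $\Tt$-vs steps beyond $\eta_\alpha$, each of which corresponds to padding on the $\Uu$-side; stripping that padding recovers $\Uu\rest\eta_\alpha+1$. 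The second clause of part (3) is a parallel statement, and will be handled analogously.

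The main obstacle will be the bookkeeping of $\Uu$-predecessors and dropping at $\Tt$-vs successor stages. Concretely, I need to verify that $\pred^\Uu(\eta_{\alpha+1})$, as computed by the lifting-MS-rule from $\widetilde{\nu}^\Uu$, lands on exactly the $\Uu$-index at which $\core(F(M^\Tt_{\eta_\alpha}))$ was placed, and that $M^{*\Uu}_{\eta_{\alpha+1}}$, the degree, and the initial-segment condition all agree with those on the $\Tt$-side. This will require carefully combining Lemma \ref{lem:core_appears_J}(\ref{item:nu(F)_leq_kappa_<_lambda(F)}), which pins $\nu(\core(F))\leq\crit(E^\Tt_\beta)<\min(\lambda(E^\Tt_\gamma),\lambda(F))$, with Lemma \ref{lem:organization_specials}, which controls the interaction between $\Tt$-specialness and dropping. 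Once this alignment is established, the verification of commuting ultrapower maps and the model equalities in part (2) should be routine, though lengthy.
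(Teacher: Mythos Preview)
Your high-level plan (induction, with the extender decomposition coming from Lemma~\ref{lem:core_appears_J} and the bookkeeping from Lemma~\ref{lem:organization_specials}) is aligned with the paper, but the organization of your induction has a structural problem that would prevent it from closing.

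You propose to induct stage-by-stage on $\lh(\Tt)$, but part~(2) --- the model-matching $M^\Uu_\eta = M^\Tt_\eta$ --- only holds when $\eta$ is non-$\Tt$-special, i.e.\ only at the last index of an \emph{unravelled} tree. At an intermediate stage $\xi=\eta_\alpha+i$ with $0<i\leq n_\alpha$ and $\eta_\alpha$ non-$\Tt$-vs, $M^\Uu_\xi$ is a genuine ultrapower while $M^\Tt_\xi=M^\Tt_{\eta_\alpha}$ is just padding; and at $\xi=\eta_{\alpha+1}$ with $\eta_{\alpha+1}$ $\Tt$-special, $M^\Tt_\xi$ is an ultrapower by a non-ISC extender and again differs from $M^\Uu_\xi$. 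So you have no usable induction hypothesis at these intermediate stages. The paper avoids this by inducting not on raw length but on the family of \emph{unravelled} trees appearing in part~(3) --- namely $\unrvl(\Tt\rest(\eta_\alpha+1))$ and $\unrvl\big((\Tt\rest(\eta_\alpha+i+2))\conc G^\Tt_{\eta_\alpha+i}\big)$ --- and applying part~(2) only to those. Part~(3) is thus not a byproduct but the scaffolding that makes the induction go through.

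Your two-way case split also omits the main successor case. You treat (i) $\xi=\eta_\alpha+i$ with $i<n_\alpha$ and (ii) $\xi=\eta_\alpha$ $\Tt$-vs, but you never handle $\xi=\eta_\alpha+n_\alpha$ with $\eta_\alpha$ non-$\Tt$-vs. There $E^\Tt_\xi\neq\emptyset$ is the block's chosen extender, $E^\Uu_\xi=\emptyset$, and the real work is verifying $M^\Tt_{\eta_{\alpha+1}}=M^\Uu_{\eta_{\alpha+1}}$ (or, when $\eta_{\alpha+1}$ is $\Tt$-special, that $(M^\Tt_{\eta_{\alpha+1}})^\passive$ is a cardinal segment of $M^\Uu_{\eta_{\alpha+1}}$). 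The paper's Case~1 is exactly this, and its subcase split --- by whether $\eta_{\xi+1}$ is $\Tt$-special and, if so, whether $\pred^\Tt(\eta_{\xi+1})=\eta_\chi+j$ has $j>0$ or $j=0$ --- is where Lemmas~\ref{lem:organization_specials} and~\ref{lem:core_appears_J} actually do their work. Your proposal does not reach this split.

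Finally, your ``main obstacle'' paragraph misidentifies the issue: when $\eta_\alpha$ is $\Tt$-vs, $E^\Uu_{\eta_\alpha}=\emptyset$ and $\pred^\Uu(\eta_{\alpha+1})=\eta_\alpha$ by the padding convention; there is no lifting-MS-rule computation, and $\pred^\Uu$ does not ``land at the $\Uu$-index where $\core(F(M^\Tt_{\eta_\alpha}))$ was placed''. What you actually need is that $M^\Uu_{\eta_\alpha}$, already computed, equals the last model of $\unrvl(\Tt\rest(\eta_\alpha+1))$ --- and that is the induction hypothesis coming from part~(3), not a predecessor computation.
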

\begin{proof}
The proof is by induction on the unravelled trees
which appear in part \ref{item:unravelled_insegs}.
For $\lh(\Tt)=1$ it is trivial and for $\lh(\Tt)$ a limit,
it follows immediately by induction. So suppose $\lh(\Tt)=\varepsilon+1$
for some $\varepsilon>0$.

\begin{case} $\lh(\Tt)=\eta_\xi+n_\xi+2+n$
where  $n<\om$ and
$\eta_\xi$ is non-$\Tt$-vs but $\eta_\xi+n_\xi+1+i$ is
$\Tt$-vs for all $i<n$.

So $E=E^\Uu_{\eta_\xi+n_\xi-1}=E^\Tt_{\eta_\xi+n_\xi}\neq\emptyset$,
this is the last non-empty extender used in $\Uu$,
$\eta_{\xi+1}=\eta_\xi+n_\xi+1$,
and $\Tt=\unrvl(\Tt\rest(\eta_{\xi+1}+1))$.
Let $\mu\leq\xi$ be least such that $E^\Tt_\eta$ is $\Tt$-vs
for each 
$\eta\in[\eta_\mu,\eta_\xi)$. Let
\[ \bar{\Tt}=\unrvl(\Tt\rest(\eta_\mu+1)),\]
and say $\lh(\bar{\Tt})=\eta_\mu+\ell+1$ (so $\ell<\om$).
Let $\bar{\Uu}=\conv(\bar{\Tt})$.
So $\eta_\xi\in[\eta_\mu,\eta_\mu+\ell]$ 
and by induction, we have
\[ 
M^{\bar{\Tt}}_{\eta_\mu+\ell}=M^{\bar{\Uu}}_{\eta_\mu+\ell}=M^{\bar{\Uu}}
_{\eta_\mu}=M^\Uu_{\eta_\mu}=M^\Uu_{\eta_\xi}.\]
It follows that 
 $\lh(E^\Uu_{\eta_\alpha+n_\alpha})\leq\lh(G^\Tt_{\eta_\xi+n_\xi})$
for each $\alpha<\xi$ 
and  (by induction on $i$) $G^\Tt_{\eta_\xi+i}\in\es_+(M^\Uu_{\eta_\xi+i})$
for each $i\leq n_\xi$, and we can set 
$E^\Uu_{\eta_\xi+i}=G^\Tt_{\eta_\xi+i}$ for each $i\leq n_\xi$.
By induction, the lemma also holds for
\[ \unrvl\Big(\Tt\rest(\eta_\xi+i+2)\conc G^\Tt_{\eta_\xi+i}\Big) \]
for each $i+1<n_\xi$.
Let $\kappa=\crit(E)$
and
\[ 
\eta_\chi+j=\pred^\Tt(\eta_{\xi+1})=\pred^\Tt(\eta_\xi+n_\xi+1)=
\pred^\Uu(\eta_{\xi}+n_\xi), \]
(recall $\widetilde{\nu}^{\Tt}_\beta=\widetilde{\nu}^{\Uu}_\beta$ 
for all 
$\beta+1<\lh(\Tt)$,
by \ref{dfn:conversion_J}). Recall 
$E^\Uu_{\eta_\xi+n_\xi}=\emptyset$
 and
 $\pred^\Uu(\eta_{\xi+1})=\eta_\xi+n_\xi$.

\begin{scase} $\eta_{\xi+1}$ is non-$\Tt$-special.
 
 Let $N^*=M^{*\Tt}_{\eta_{\xi+1}}$ and 
$d=\deg^\Tt_{\eta_{\xi+1}}$.
 We claim that $M^{*\Uu}_{\eta_{\xi}+n_\xi+1}=N^*$
 and $\deg^\Uu_{\eta_\xi+n_\xi+1}=d$.
 For if $\eta_\chi$ is $\Tt$-special
 and $j=0$ then $\eta_{\xi+1}\in\dropset^\Tt$
 (as $\eta_{\xi+1}$ is non-$\Tt$-special),
 but then since $(M^\Tt_{\eta_\chi})^\passive\ins M^\Uu_{\eta_\chi}$
 and $\OR(M^\Tt_{\eta_\chi})$ is a cardinal of $M^\Uu_{\eta_\chi}$,
this gives the claim in this case. Suppose
 $\eta_\chi$ is non-$\Tt$-special or $j>0$.
If $j=0$ then
 $M^\Tt_{\eta_\chi}=M^\Uu_{\eta_\chi}$
 and $\deg^\Tt_{\eta_\chi}=\deg^\Uu_{\eta_\chi}$,
 which suffices. 
Suppose $j>0$
 and let $G=G^\Tt_{\eta_\chi+j-1}$, so
\[ \rho_1^{M^\Tt_{\eta_\chi}|\lh(G)}\leq\nu(G )\leq\kappa.\]
Now $\exit^\Tt_{\eta_\chi+n_\chi}\ins M^\Tt_{\eta_\chi}|\lh(G)$,
and
as $\eta_{\xi+1}$ is non-$\Tt$-special, therefore
$N^*\pins M^\Tt_{\eta_\chi}|\lh(G)$
(and in particular $\eta_{\xi+1}\in\dropset^\Tt$),
and by coherence, we have $N^*\pins M^\Uu_{\eta_\chi+j}$,
so $N^*=M^{*\Uu}_{\eta_\xi+n_\xi+1}$, 
which suffices.

 It follows that
$M^\Tt_{\eta_{\xi+1}}=M^\Uu_{\eta_{\xi+1}}$, and
(combined with induction if $\eta_{\xi+1}\notin\dropset^\Tt$,
noting that $j=0$ in this case)
there is appropriate
agreement of iteration maps.
(As $\eta_{\xi+1}$ is non-$\Tt$-special, $E$ is the last extender
of $\Tt''=\unrvl(\Tt\rest(\eta_{\xi+1}+1))$,
and $\conv(\Tt'')=\Uu\rest(\eta_{\xi+1}+1)$.)

The remaining properties in this subsubcase are now straightforward
to verify by induction.
\end{scase}

\begin{scase}\label{sscase:chi_non_T-special_is_trans_succ_J}
$\eta_{\xi+1}$ is $\Tt$-special and $j>0$.

By Lemma \ref{lem:organization_specials},
letting $G=G^\Tt_{\eta_\chi+j-1}$,
we have  $N^*=M^{*\Tt}_{\eta_{\xi+1}}=M^\Tt_{\eta_\chi}|\lh(G)$,
and $\widetilde{\nu}^\Tt_{\eta_\chi+j-1}=\nu(G)\leq\kappa<\lambda(G)$.
Also  $\deg^\Tt_{\eta_{\xi+1}}=0$.
So $M^\Tt_{\eta_{\xi+1}}=\Ult_0(N^*,E)$,
and note then that
\begin{equation}\label{eqn:ext_equiv} F(M^\Tt_{\eta_{\xi+1}})\text{
is equivalent to the two-step iteration }(G,E).\end{equation}

Let $\Tt'=\unrvl(\Tt\rest(\eta_\chi+j+1)\conc G)$
(nicely padded J-$m$-maximal), 
$\lh(\Tt')=\eta_\chi+j+2+\ell$
(so $\ell<\om$)
and 
\[ 
\Uu'=\conv(\Tt')=\Uu\rest(\eta_\chi+j+1)\conc(\emptyset)\conc(\emptyset,\ldots,
\emptyset),\]
noting the lemma applies to $\Tt'$ by induction
(note $E^{\Uu'}_{\eta_\chi+j-1}=G$
and $E^{\Uu'}_{\eta_\chi+j}=\emptyset$
and $\eta^{\Tt'}_{\chi+1}=\eta_\chi+j+1$).
Letting $k=\deg^{\Tt'}_\infty=\deg^{\Uu'}_\infty=\deg^\Uu(\eta_\chi+j)$, then
\[ 
M^{\Tt'}_\infty=M^{\Uu'}_\infty=M^\Uu_{\eta_\chi+j}
=\Ult_k(M^{*\Uu}_{\eta_\chi+j},G).
\]
Note  $\kappa=\crit(E)<\lambda(G)<\rho_k(M^\Uu_{\eta_\chi+j})$
and since $E$ is total over $M^\Tt_{\eta_\chi}|\lh(G)$
and by coherence etc, $E$ is total over $M^\Uu_{\eta_\chi+j}$.
So $\deg^\Uu_{\eta_{\xi+1}}=\deg^\Uu_{\eta_\xi+n_\xi}=k$
and
\[\begin{array}{rcccl}M^\Uu_{\eta_{\xi+1}}&=&M^\Uu_{\eta_\xi+n_\xi}&=
&\Ult_k(M^\Uu_{\eta_\chi+j},E)\\
&&&=&
\Ult_k(\Ult_k(M^{*\Uu}_{\eta_\chi+j},G),E)\\
&&&=&\Ult_k(M^{*\Uu}_{\eta_\chi+j},F(M^\Tt_{\eta_{\xi+1}})),\end{array} \]
with matching ultrapower maps,
by line (\ref{eqn:ext_equiv}).

Let $\Tt''=\unrvl(\Tt\rest(\eta_{\xi+1}+1))$
and
\[ 
\Uu''=\conv(\Tt'')=\Uu\rest(\eta_{\xi+1}+1)\conc(\emptyset,\ldots,\emptyset).\]
Then $\eta_{\xi+1}$ is $\Tt$-special, hence $\Tt''$-special,
\[ M^{\Tt''}_{\eta_{\xi+1}}=\Ult_0(M^\Tt_{\eta_\chi}|\lh(G),E) \]
and $F(M^{\Tt''}_{\eta_{\xi+1}})$ is equivalent to the two-step
iteration  $(G,E)$,
and then an easy induction gives that for 
each $i\in(0,\ell]$, $\eta_{\xi+1}+i$ is $\Tt''$-special and
\[ M^{\Tt''}_{\eta_{\xi+1}+i}=\Ult_{0}(M^{\Tt'}_{\eta_\chi+j+i},E) \]
and $F(M^{\Tt''}_{\eta_{\xi+1}+i})$ is equivalent to the two-step
iteration $(F(M^{\Tt'}_{\eta_\chi+j+i}),E)$.
Further, $\lh(\Tt'')=\eta_{\xi+1}+2+\ell$,
and recalling $k=\deg^{\Tt'}_\infty=\deg^{\Tt'}_{\eta_\chi+j+1+\ell}=
\deg^{\Uu'}_\infty$,
note $k=\deg^{\Tt''}_\infty=\deg^{\Tt''}_{\eta_{\xi+1}+1+\ell}$,
and
(letting) $P^*=M^{*\Tt'}_\infty=M^{*\Tt''}_\infty$,
we have
\[\begin{array}{rcrclclcl}M^{\Tt''}_\infty&=&
M^{\Tt''}_{\eta_{\xi+1}+1+\ell}&=&
\Ult_{k}(P^*,
F(M^{\Tt''}_{\eta_{\xi+1}+\ell}))\\
&&&=&\Ult_k(\Ult_k(P^*,
F(M^{\Tt'}_{\eta_\chi+j+\ell})),E)\\
&&&=&\Ult_k(M^{\Tt'}_{\eta_\chi+j+1+\ell},E)\\
&&&=&\Ult_k(M^{\Uu'}_{\eta_\chi+j},E)&=&M^{\Uu''}_\infty,\end{array}\]
with corresponding ultrapower maps,
 and since by induction
the iteration maps of $\Tt',\Uu'$ match appropriately,
so do those of $\Tt'',\Uu''$.

Regarding part \ref{item:comparability_J} for $\Tt''$ and
$X=\{\beta\bigm|\beta\leq^{\ext,\Tt}\eta_{\xi+1}\}$,
note
$X=\{\eta_\xi+n_\xi,\eta_{\xi+1}\}$
and $\widetilde{\eta_\xi+n_\xi}=\eta_\xi+n_\xi-1$
and $\widetilde{\eta_{\xi+1}}=\eta_\chi+j-1$,
and $E^{\Uu''}_{\widetilde{\gamma}}=\core(E^\Tt_\gamma)$
for $\gamma\in X$ (these two extenders are $G$ and $E$),
and 
\[ \eta_\chi+j\leq^{\Uu''}\eta_\xi+n_\xi<^{\Uu''}\eta_{\xi+1}<^{\Uu''}
\eta_{\xi+1}+1+i \]
for $i\leq\ell$, and
$(\eta_\chi+j,\eta_{\xi+1}+1+\ell]_{\Uu''}\inter\dropset_{\deg}^{\Uu''}
=\emptyset$ ($\Uu''$ only pads
after $\eta_\xi+n_\xi$).
Part \ref{item:iterated_comparability_J}
follows from the above
considerations and by induction applied to $\Tt'$.

\end{scase}
\begin{scase}\label{sscase:chi_T-special_J} $\eta_{\xi+1}$
is $\Tt$-special and $j=0$.

By Lemma 
\ref{lem:organization_specials},
$\eta_\chi$ is $\Tt$-special,
$\eta_{\xi+1}\notin\dropset^\Tt$,
 and $\nu(F)\leq\kappa<\lambda(F)$
 where $F=F(M^\Tt_{\eta_\chi})$.
Note then that $F(M^\Tt_{\eta_{\xi+1}})$ is
equivalent
to the two-step iteration $(F,E)$. So
things are almost the same as in Subcase 
\ref{sscase:chi_non_T-special_is_trans_succ_J},
with $F$ replacing $G$,
and we leave the details to the reader.

\end{scase}

\end{case}

\begin{case}
 $\lh(\Tt)=\eta_\zeta+\ell+1$ where $\zeta$ is a limit
 and $E^\Tt_{\eta_\zeta+i}$ is $\Tt$-vs for all $i<\ell$.
 
 Let $b=[0,\eta_\zeta)_\Tt$. Note that $\eta_\zeta$ is $\Tt$-special
iff $\eta_\alpha$ is $\Tt$-special
for all sufficiently large $\eta_\alpha\in b$.
By induction with parts \ref{item:comparability_J}
and
\ref{item:iterated_comparability_J}, $b$ induces
 a ${\Uu\rest\lambda}$-cofinal branch,
 which has the properties required by
 Definition \ref{dfn:conversion_J}(\ref{item:conversion_limits_J}).
 (If $\lambda$ is $\Tt$-special
 then apply part \ref{item:comparability_J}
 to  $\unrvl(\Tt\rest(\eta_\alpha+1))$ for sufficiently large 
$\eta_\alpha+1<^\Tt\eta_\zeta$.)

So if $\eta_\zeta$ is non-$\Tt$-special,
then induction easily shows that
$M^\Tt_{\eta_\zeta}=M^\Uu_{\eta_\zeta}$ and iteration maps agree 
appropriately
etc.
If $\eta_\zeta$ is $\Tt$-special, then proceed essentially
as in Subcase \ref{sscase:chi_T-special_J},
but using  $\vec{F}^\Tt_{\eta_\alpha\eta_\zeta}$
and the equivalent $\vec{F}^\Uu_{\eta_\alpha\eta_\zeta}$,
where $\eta_\alpha\in b$ is sufficiently large,
in place of single extenders of $\Tt,\Uu$.
\end{case}

This completes the proof of the lemma.
\end{proof}

\begin{lem}\label{lem:conversions_cover_J}
 Let $M$ be an $m$-sound J-premouse
  and $\Uu'$ be a lifting-MS-$m$-maximal 
tree on $M$. Then there is a unique pair $(\Tt,\Vv)$
such that $(M,m,\Tt)$ is suitable,
$\Tt$ is unravelled everywhere unravelable, 
  $\Vv=\conv(\Tt)$ and $\Uu'$ is the tree given by removing
  all padding from $\Vv$.
\end{lem}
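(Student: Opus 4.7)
My plan is to invert the construction of Definition \ref{dfn:conversion_J} by induction on $\lh(\Uu')$, building $(\Tt,\Vv)$ in parallel. Both existence and uniqueness fall out simultaneously once each step is shown to be forced. The key preparatory observation is that the rev-e-dropdown block structure $\{[\eta^{\Uu'}_\xi,\eta^{\Uu'}_\xi+n^{\Uu'}_\xi]\}_{\xi<\lambda^{\Uu'}}$ is entirely determined by $\Uu'$: reading left-to-right from $M^{\Uu'}_0$, the extenders $E^{\Uu'}_0,E^{\Uu'}_1,\ldots$ greedily form a rev-e-dropdown whose length is fixed by Lemma \ref{lem:rev-e-dd_seg} (tail-segment uniqueness), and this process iterates.

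At successor stages I process $\Uu'$ position-by-position. If $\gamma$ lies strictly inside a $\Uu'$-block, I place $E^{\Uu'}_\gamma$ into $\Vv$ at the corresponding non-empty position (it becomes a $G^\Tt$-entry) and insert a pad in $\Tt$. If $\gamma$ is the block-ending position, I instead place $E^{\Uu'}_\gamma$ into $\Tt$ as the round-ending $E^\Tt_{\eta^\Tt_\alpha+n^\Tt_\alpha}$ and pad $\Vv$ at that index. Then I test whether the resulting $\Tt$-position is $\Tt$-special; if so, I extend $\Tt$ with a finite chain of $\Tt$-vs rounds using $F(M^\Tt_{\mathrm{top}})$ until a non-$\Tt$-special position is reached, each $\Tt$-vs round appearing in $\Vv$ only as one additional pad (invisible in $\Uu'$). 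Termination of the unravelling in finitely many steps and wellfoundedness of the intermediate models must be verified; these amount exactly to everywhere-unravelability of $\Tt$, which is forced by the wellfoundedness of $\Uu'$ together with Lemma \ref{lem:conversion_exists_J} applied to the already-constructed proper initial segments of $\Tt$.

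At a limit stage $\eta$ of $\Uu'$, let $c\sub\eta$ be the $\Uu'$-branch. The $\Tt$-branch $b$ is recovered as the inverse of Definition \ref{dfn:conversion_J}(\ref{item:conversion_limits_J}): I set $X^c=\{\beta:\widetilde{\beta}+1\in c\}$ and take $b$ to be the $\leq^\Tt$-downward closure of $\{\beta+1:\beta\in X^c\}$ inside $\Tt$ built so far. Lemma \ref{lem:conversion_exists_J}(\ref{item:comparability_J},\ref{item:iterated_comparability_J}), applied to the unravelled prefixes $\unrvl(\Tt\rest(\eta^\Tt_\alpha+1))$, guarantees that the positions of $c$ indeed arise as closures of such sets, so $b$ is unambiguous and, upon feeding back through $\conv$, produces a $\Vv$-branch whose closure recovers $c$.

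The main obstacle will be the limit step: verifying that $b$ so defined is genuinely cofinal in $\sup_{\gamma<\eta}\lh(\Tt\rest(\gamma+1))$ and not prematurely truncated, and that drops and degrees match as required for a J-$m$-maximal tree. The delicate point is that $\Tt$-special positions may accumulate along $c$, and these must be grouped coherently into $\Tt$-vs tails in $\Tt$ in such a way that the prefix of the next non-$\Tt$-vs round attaches cleanly; this is exactly the content of Lemma \ref{lem:conversion_exists_J}(\ref{item:iterated_comparability_J}) read backwards. Uniqueness is by contrast straightforward: the block decomposition of $\Uu'$, combined with the requirement that $\Tt$ be both unravelled and everywhere unravelable, forces every successor decision in the construction and forces $b$ at every limit, so any two witnessing pairs agree stage-by-stage.
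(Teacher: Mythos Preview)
Your successor step is essentially right and matches the paper, which dismisses it in one line as ``an easy consequence of Lemma~\ref{lem:conversion_exists_J}''. The gap is at the limit step.

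Your proposed definition of the $\Tt$-branch, $b=\text{the }{\leq^\Tt}\text{-downward closure of }\{\beta+1:\widetilde{\beta}+1\in c\}$, does not obviously yield a branch at all: the set $\{\beta+1:\widetilde{\beta}+1\in c\}$ need not be ${\leq^\Tt}$-linearly ordered, since distinct $\Tt$-vs positions $\beta$ can have $\widetilde{\beta}+1\in c$ while $\beta+1$ sits on incomparable $\Tt$-branches. Lemma~\ref{lem:conversion_exists_J}(\ref{item:comparability_J},\ref{item:iterated_comparability_J}) only runs in the direction $\Tt$-order $\Rightarrow$ $\Uu$-order; ``reading it backwards'' is precisely the content that is missing, not something the lemma supplies.

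The paper's argument at a limit $\eta_\zeta$ is substantially different from inversion of the $\widetilde{\,\cdot\,}$ map. First a wellfoundedness argument (Claim~\ref{clm:ev_stable_J}: otherwise one builds a descending chain $i^\Uu_{\gamma_n+1,\eta_\zeta}(\lambda(E^\Uu_{\gamma_n}))$ in $M^\Uu_{\eta_\zeta}$) shows that for all large enough $\eta+1\in c$ there is a \emph{stabilization point} $\chi_\eta\in c$ with $i^\Uu_{\eta+1,\chi_\eta}(\lambda(E^\Uu_\eta))\leq\crit(i^\Uu_{\chi_\eta,\eta_\zeta})$. One then proves, by analyzing the unravellings $\unrvl(\Tt\rest(\chi'+1))$ and using Lemma~\ref{lem:conversion_exists_J} in the forward direction, that the $\chi_\eta$'s themselves constitute the tail of a $\Tt$-branch $b'_0$. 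This $b'_0$ may still fail to be cofinal when $\Tt$-specialness persists along it; in that case one resets to a new base point $\xi_1$ and repeats, obtaining $b'_1,b'_2,\ldots$. Termination is secured by the observation that the unravelling length $\ell_\alpha$ (where $\lh(\unrvl(\Tt\rest(\eta_\alpha+1)))=\eta_\alpha+1+\ell_\alpha$) is constant along each $b'_n$ and strictly decreases when passing from $b'_n$ to $b'_{n+1}$, so some $b'_n$ is cofinal. Neither the stabilization argument nor the decreasing-$\ell$ iteration appears in your proposal, and they are the substance of the limit case.
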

\begin{proof}
We proceed by induction on $\lh(\Uu')$.
The induction is an easy consequence of Lemma \ref{lem:conversion_exists_J}
except for the case that $\lh(\Uu')=\zeta+1$ for some limit $\zeta$.
So consider this case,
assuming that the lemma holds for trees $\Uu''$
of length ${\leq\zeta}$.
So we have  trees 
$\Tt\rest\zeta$
and $\Uu\rest\zeta$ corresponding to $\Uu'\rest\zeta$,
where writing $\lambda=\lambda^{\Tt\rest\zeta}$, 
$\eta_\alpha=\eta_\alpha^{\Tt\rest\zeta}$ etc, (note) 
$\zeta=\lambda=
\sup_{\alpha<\zeta}\eta_\alpha$.
Write  $\eta_\zeta=\zeta$.
Set $\Uu=(\Uu\rest\eta_\zeta)\conc b$,
where $b$ is the branch determined by $[0,\zeta)_{\Uu'}$.
(So the desired $\Vv$ will be of the form 
$\Uu\conc(\emptyset,\ldots,\emptyset)$.)

\begin{clm}\label{clm:ev_stable_J}
 There is $\alpha<^\Uu\eta_\zeta$
 such that for all $\xi<\zeta$ and $i<n_\xi$
 with
 \[ \alpha<^\Uu\eta_\xi+i+1<^\Uu\eta_\zeta,\]
  letting $\delta=\lambda(G^\Tt_{\eta_\xi+i})=\lambda(E^\Uu_{\eta_\xi+i})$,
  there is $\chi\in[\eta_\xi+i+1,\eta_\zeta)_\Uu$
with
\[i^\Uu_{\eta_\xi+i+1,\chi}(\delta)\leq\crit(i^\Uu_{\chi\eta_\zeta}).\]
\end{clm}
\begin{proof}
If not, then select a sequence $\left<(\xi_n,i_n,\delta_n)\right>_{n<\om}$
of witnessing triples $(\xi,i,\delta)$ such that
writing $\gamma_n=\eta_{\xi_n}+i_n$, we have 
$\gamma_n+1<^\Uu\gamma_{n+1}+1$.
Then 
since 
\[ \delta_n= i^{*\Uu}_{\gamma_n+1}(\crit(E^\Uu_{\gamma_n})),\]
we get 
$i^\Uu_{\gamma_n+1,\eta_\zeta}(\delta_n)
>i^\Uu_{\gamma_{n+1}+1,\eta_\zeta}(\delta_{n+1})$
for each $n<\om$, so $M^\Uu_{\eta_\zeta}$ is illfounded, a contradiction.
\end{proof}

 We now break into cases, mostly 
in order that we can discuss a simpler case first as a warm-up:

\begin{casetwo}\label{case:not_cof_many_transitions_J}
For all sufficiently large $\eta+1<^\Uu\eta_\zeta$
with $E^\Uu_\eta\neq\emptyset$, we have $i^\Uu_{\eta+1,\eta_\zeta}$ exists and
$\lambda(E^\Uu_\eta)\leq\crit(i^{\Uu}_{\eta+1,\eta_\zeta})$.

Note then that for all sufficiently large $\eta+1<^\Uu\eta_\zeta$
with $E^\Uu_\eta\neq\emptyset$,
there is 
$\alpha$ such that
$\pred^\Uu(\eta+1)=\eta_\alpha$
(as when $\pred^\Uu(\eta+1)=\eta_\alpha+j$ with $j\in(0,n_\alpha]$,
and $i^\Uu_{\eta_\alpha+j,\eta_\zeta}$ exists,
then 
$\crit(i^\Uu_{\eta_\alpha+j,\eta_\zeta})<\lambda(E^\Uu_{\eta_\alpha+j-1})$).

Fix $\xi_0$
such that:
\begin{enumerate}[label=--]
 \item $\eta_{\xi_0}<^\Uu\eta_\zeta$ and 
$[\eta_{\xi_0},\eta_\zeta)_\Uu\inter\dropset_{\deg}^\Uu=\emptyset$
\item  $\pred^\Uu(\eta+1)=\eta_\alpha$ for some $\alpha$
whenever 
$\eta+1\in[\eta_{\xi_0},\eta_\zeta)_\Uu$
with $E^\Uu_\eta\neq\emptyset$
\end{enumerate}
(but don't demand that $\xi_0$ is least such). Let 
$b=[\eta_{\xi_0},\eta_\zeta)_\Uu$.

Let
$\eta+1\in b$
with $E^\Uu_\eta\neq\emptyset$,
and $\eta_\chi=\pred^\Uu(\eta+1)$.
Then $\eta=\eta_\xi+n_\xi-1$ for some $\xi$,
so $E^\Tt_\eta=\emptyset$, $E^\Tt_{\eta+1}=E^\Uu_\eta$, 
$E^\Uu_{\eta+1}=\emptyset$
and $\pred^\Uu(\eta+1)=\eta=\pred^\Tt(\eta+2)$, and note that 
by choice of 
$\xi_0$,
\[ \successor^\Uu(\eta+1,\eta_\zeta)=\eta_{\xi+1}=\eta+2.\]
Moreover, $\eta+2\notin\dropset_{\deg}^\Tt$.
For because
$\Uu\rest\eta_\zeta=\conv(\Tt\rest\eta_\zeta)$, if $\eta_\chi$ is 
non-$\Tt$-special then $M^\Tt_{\eta_\zeta}=M^\Uu_{\eta_\zeta}$ and
$\deg^\Tt_{\eta_\zeta}=\deg^\Uu_{\eta_\zeta}$,
and since $\eta+1\notin\dropset^\Uu_{\deg}$,
therefore $\eta+2\notin\dropset^\Tt_{\deg}$.
And if $\eta_\chi$ is $\Tt$-special
then $(M^\Tt_{\eta_\chi})^\passive$
is a cardinal segment of $M^\Uu_{\eta_\chi}$
and $\deg^\Tt_{\eta_\chi}=0$, and so
$\eta+2\notin\dropset^\Tt_{\deg}$.
Note that also $\lambda(E^\Uu_\eta)\leq\crit(i^\Uu_{\eta+1,\eta_\zeta})$
in this situation.

Let $\chi$ be least such that either $\chi=\eta_\zeta$
or  $\eta_{\xi_0}$
is $\Tt$-special and $\eta_{\xi_0}\leq^\Uu\chi<^\Uu\eta_\zeta$
and letting $\delta=\lgcd(M^\Tt_{\eta_{\xi_0}})$,
 we have
$i^\Uu_{\eta_{\xi_0}\chi}(\delta)\leq\crit(i^\Uu_{\chi\eta_\zeta})$
and $E^\Uu_\eta\neq\emptyset$ where 
$\eta+1=\successor^\Uu(\chi,\eta_\zeta)$.
Let $b'_0$ be the ${<\eta_\zeta}$-closure of
\[ 
[0,\eta_{\xi_0}]_\Tt\cup\Big\{
\eta+2\in b\inter(\chi+1)\Bigm|
 E^\Uu_\eta\neq\emptyset\Big\}.\]

\begin{clm}
$b'_0$ is a branch of $\Tt\rest\eta_\zeta$ (cofinal iff $\chi=\eta_\zeta$), and
$(b'\cut(\eta_{\xi_0}+1))\inter\dropset^\Tt_{\deg}=\emptyset$.\end{clm}
\begin{proof}We observe that this holds for the initial segments of 
$b'_0$,
by induction. For 
$b'_0\inter(\eta_{\xi_0}+1)$
it is trivial. Suppose it holds for $b'_0\inter(\beta+1)$
where $\beta\in b'_0$. Note that $\beta=\eta_\gamma$
for some $\gamma$, and $\eta_\gamma$
is $\Tt$-special iff $\eta_{\xi_0}$ is $\Tt$-special.
Suppose $\eta_\gamma<\chi$ and 
let
 $\eta+1=\successor^\Uu(\eta_\gamma,\eta_\zeta)$.

 Suppose $\eta$ is $\Tt$-vs. Then $\eta_\gamma=\pred^\Uu(\eta+1)=\eta$,
so $\eta_\gamma,\eta_{\xi_0}$ are $\Tt$-special, and
\[ 
\widetilde{\nu}^\Uu_{\eta}=\lambda(E^\Tt_\eta)=i^\Tt_{\eta_{\xi_0}
\eta}(\delta)=i^\Uu_{\eta_{\xi_0}\eta}(\delta)\leq
\crit(i^\Uu_{\eta+1,\eta_\zeta})=\crit(i^\Uu_{\eta\eta_\zeta}),\]
so $\chi\leq\eta$, contradiction. So $\eta$ is non-$\Tt$-vs.
But then $E^\Uu_\eta\neq\emptyset$ and by the remarks above,
$\pred^\Tt(\eta+2)=\eta$ etc, and the claim holds
for $b'_0\inter((\eta+2)+1)$.

Now let $\gamma>\xi_0$ be a limit
with $\eta_\gamma\in b$ and suppose the claim
holds below $\eta_\gamma$.
Then $[0,\eta_\gamma)_\Uu$
is determined from $[0,\eta_\gamma)_\Tt$
by Definition \ref{dfn:conversion_J},
and by induction, it easily follows
that $[0,\eta_\gamma]_\Tt=b'_0\inter(\eta_\gamma+1)$.
\end{proof}

Now if $\chi=\eta_\zeta$, so $b'_0$ is $\Tt\rest\eta_\zeta$-cofinal,
then set $\Tt'=\Tt\rest\lambda\conc b'_0$,  a well-defined
putative tree.

Suppose now that $\eta_\gamma=\chi<\eta_\zeta$ and let 
$\eta+1=\successor^\Uu(\eta_\gamma,\eta_\zeta)$.
Arguing as above, it follows that $\eta_\gamma$ is $\Tt$-vs,
 $E^\Uu_{\eta_\gamma}=\emptyset$
 $\pred^\Uu(\eta_\gamma+1)=\eta_\gamma$
 and $\eta_{\gamma+1}=\eta_\gamma+1<^\Uu\eta_\zeta$.
 Set $\xi_1=\gamma+1$, and define
 $b_1'$ from $\xi_1$ like $b_0'$ was defined from $\xi_0$.
 Proceed in this way defining $\xi_n,b_n'$ as far as possible.
 
 For $\alpha<\lh(\Uu)$, let $\ell_\alpha$
 be the $\ell<\om$ such that $\unrvl(\Tt\rest(\eta_\alpha+1))$
 has length $\eta_\alpha+1+\ell$.
 
Note that $\ell_\alpha$ is constant over
 all $\eta_\alpha\in b_n'$, and if $\eta_\alpha\in b_n'$ and $\eta_\beta\in 
b_{n+1}'$ then $\ell_{\beta}=\ell_\alpha-1$.
(This uses the properties of $b_n'$, including that
it is a branch of $\Tt$.)
So we reach $n<\om$ such that $b_n'$ is indeed $\Tt\rest\eta_\zeta$-cofinal,
and define $\Tt'=\Tt\rest\eta_\zeta\conc b_n'$. Set $b'=b_n'$ and
$\ell=\ell_\beta$ for $\eta_\beta\in b'$.

\begin{clm}\label{clm:unrvl(Tt')_at_limit_J}$\Tt'$  has wellfounded last model,
and moreover, $\unrvl(\Tt')$
is well-defined with wellfounded models,
and
 \begin{equation}\label{eqn:conv_unrvl_T'_J}\conv(\unrvl(\Tt'))= 
\Uu\rest(\eta_\zeta+1)\conc(\emptyset,\ldots,\emptyset) \end{equation}
and $\ell_{\zeta}^{\Tt'}=\ell$, i.e.
  the tuple $(\emptyset,\ldots,\emptyset)$
in (\ref{eqn:conv_unrvl_T'_J}) has length $\ell$.\end{clm}
\begin{proof}
Suppose $\ell=0$. Then $\eta_\beta$ is non-$\Tt$-special
for $\eta_\beta\in b'$, so note that $\Tt'$ is already unravelled,
and   $M^{\Tt'}_{\eta_\zeta}=M^{\Uu}_{\eta_\zeta}$.
So the fact that (\ref{eqn:conv_unrvl_T'_J}) holds in this case
follows easily by induction
and because our definition of $b'$ ensures
that $[0,\eta_\zeta)_\Uu$ is recovered from 
$b'$ via
Definition \ref{dfn:conversion_J}.

If $\ell>0$ then $\eta_\zeta$
is $\Tt'$-special, but it is similar,
with computation much as in the proof of 
Lemma \ref{lem:conversion_exists_J}.
(We have that $(M^\Tt_{\eta_{\xi_n}})^\passive$
is a cardinal segment of $M^\Uu_{\eta_{\xi_n}}$
and $\OR(M^\Tt_{\eta_{\xi+n}})\leq\rho_k(M^\Uu_{\eta_{\xi_n}})$
where $k=\deg^\Uu_{\eta_{\xi_n}}$,
so note that $(M^{\Tt'}_{b'})^\passive$
is a cardinal segment of $M^\Uu_{\eta_\zeta}$,
and in particular is wellfounded.
Now letting $\Tt''=\unrvl(\Tt')$,
proceed by induction on $i<\ell$,
rearranging extenders as in the proof of Lemma \ref{lem:conversion_exists_J},
to show that $(M^{\Tt''}_{\eta_\zeta+i})^\passive$
is a cardinal segment of $M^\Uu_{\eta_\zeta}$,
and finally that $M^{\Tt''}_\infty=M^\Uu_{\eta_\zeta}$
and $\deg^{\Tt''}_{\eta_\zeta+\ell}=\deg^{\Uu}_{\eta_\zeta}$.)
\end{proof}
\end{casetwo}

\begin{casetwo}
Otherwise: There are cofinally many $\eta+1<^\Uu\eta_\zeta$
such that $E^\Uu_\eta\neq\emptyset$ and $i^\Uu_{\eta+1,\eta_\zeta}$
exists and $\crit(i^\Uu_{\eta+1,\eta_\zeta})<\lambda(E^\Uu_\eta)$.

This case is  an embellishment of the previous one. Note that if 
$\eta+1<^\Uu\eta_\zeta$
and $E^\Uu\neq\emptyset$ and
and $i^\Uu_{\eta+1,\eta_\zeta}$ exists and
$\gamma+1=\successor^\Uu(\eta+1,\eta_\zeta)$,
then $\crit(i^\Uu_{\eta+1,\eta_\zeta})<\lambda(E^\Uu_\eta)$
iff $\eta=\eta_\alpha+j-1$ for some $\alpha$ and $j\in(0,n_\alpha]$
and it is not the case that $\gamma=\eta+1=\eta_\alpha+n_\alpha$.

Appealing to Claim \ref{clm:ev_stable_J} for existence, fix any $\xi_0$
such that:
\begin{enumerate}[label=(\alph*)]
 \item $\eta_{\xi_0}<^\Uu\eta_\zeta$ and 
$(\eta_{\xi_0},\eta_\zeta)_\Uu\inter\dropset_{\deg}^\Uu=\emptyset$,
\item\label{item:succ_xi_0_non_empty}$E^\Uu_\eta\neq\emptyset$ where
$\eta+1=\successor^\Uu(\eta_{\xi_0},\eta_\zeta)$,
\item 
for all $\eta+1\in(\eta_{\xi_0},\eta_\zeta)_\Uu$
with $E^\Uu_\eta\neq\emptyset$, there is $ \chi\in(\eta+1,\eta_\zeta)_\Uu$ such 
that
$i^\Uu_{\eta+1,\chi}(\lambda(E^\Uu_{\eta}))\leq\crit(i^\Uu_{\chi\eta_\zeta}
)$.
\end{enumerate}

For $\eta+1\in(\eta_{\xi_0},\eta_\zeta)$ with $E^\Uu_\eta\neq\emptyset$,
let $\chi_\eta$ be the least witness $\chi$ as above and such that
$E^\Uu_\gamma\neq\emptyset$ where $\gamma+1=\successor^\Uu(\chi,\eta_\zeta)$.
Note that if $\eta<\eta'$ are such
and $\eta'+1\leq\chi_\eta$ then $\chi_{\eta'}\leq\chi_\eta$.

Now let $\chi\in[\eta_{\xi_0},\eta_\zeta]_\Uu$ be least such that
either  $\chi=\eta_\zeta$
or 
\begin{enumerate}[label=--]
\item $\eta_{\xi_0}$ is $\Tt$-special
and
$i^\Uu_{\eta_{\xi_0}\chi}(\delta)\leq\crit(i^\Uu_{\chi\eta_\zeta})$
where $\delta=\lgcd(M^\Tt_{\eta_{\xi_0}})$ 
(so actually $\chi>\eta_{\xi_0}$, by \ref{item:succ_xi_0_non_empty}), and
\item $E^\Uu_\eta\neq\emptyset$
where $\eta+1=\successor^\Uu(\chi,\eta_\zeta)$.
\end{enumerate}

Note that if $\chi<\eta_\zeta$
then $\chi_\eta\leq\chi$ for all 
$\eta+1\in(\eta_{\xi_0},\chi]_\Uu$
with $E^\Uu_\eta\neq\emptyset$.

Let $b'_0=[0,\eta_{\xi_0}]_\Tt\cup b'$
where $b'$
is the ${<\chi}$-closure
of
\[ \chi\inter\Big\{\chi_\eta\Bigm|
\eta+1\in(\eta_{\xi_0},\eta_\zeta)_\Uu\text{ and }
E^\Uu_\eta\neq\emptyset\Big\}.\]
Note $b'_0\cut\eta_{\xi_0}\sub[\eta_{\xi_0},\eta_\zeta)_\Uu$.

\begin{clm} $b'_0$ is a branch of $\Tt\rest\lambda$
and $b'_0\cut(\eta_{\xi_0}+1)\inter\dropset^\Tt_{\deg}=\emptyset$.
\end{clm}
\begin{proof}
We establish the claim regarding $b'_0\inter(\beta+1)$
by induction on $\beta\in b'_0$ with $\beta\geq\eta_{\xi_0}$.
If $\beta=\eta_{\xi_0}$ it is trivial. 
Suppose the claim holds
for $b'_0\inter(\beta+1)$
where $\beta\in b'_0$ and $\beta\geq\eta_{\xi_0}$
and let $\eta+1=\successor^\Uu(\beta,\eta_\zeta)$.
We must see that $\chi_\eta$ is a successor ordinal,
$\pred^\Tt(\chi_\eta)=\beta$
and $\chi_\eta\notin\dropset^\Tt_{\deg}$.
If $\crit(i^\Uu_{\eta+1,\eta_\zeta})\geq\lambda(E^\Uu_\eta)$
then we just get $\chi_\eta=\eta+2$ (note $E^\Uu_{\eta+1}=\emptyset$)
and things are easy, so suppose 
$\crit(i^\Uu_{\eta+1,\eta_\zeta})<\lambda(E^\Uu_\eta)$.
Let $\chi'\leq\chi_\eta$ be least such that
$\chi'=\eta_\gamma$ for some $\gamma$
and 
$i^\Uu_{\eta+1,\chi'}(\lambda(E^\Uu_\eta))\leq\crit(i^\Uu_{\chi'\eta_\zeta})$,
so in fact $M^\Uu_{\chi'}=M^\Uu_{\chi_\eta}$,
and $\alpha$ is $\Tt$-vs for each $\alpha\in[\chi',\chi_\eta)$.
Let $\Tt'=\unrvl(\Tt\rest(\chi'+1))$,
so $\eta_\chi+1\leq\lh(\Tt')$
and $\Tt'\rest(\eta_\chi+1)\ins\Tt$.
Let $\Uu'=\conv(\Tt')$,
so $M^{\Uu'}_\infty=M^\Uu_{\chi'}=M^\Uu_{\chi_\eta}$
etc. Now by Lemma \ref{lem:conversion_exists_J}
applied to $\Tt',\Uu'$,  $\vec{F}^{\Tt'}_{<\infty}$
is equivalent to $\vec{F}^{\Uu'}_{<\infty}$,
and
\begin{equation}\label{eqn:key_concatenation} 
E^\Uu_\eta\conc\left<E^\Uu_\alpha\right>_{\alpha+1\in(\eta+1,\chi']} 
\end{equation}
is a tail segment of the latter.
But then the choice of $\chi_\eta$
easily gives that there is $\alpha\geq\chi'$ which is $\Tt$-vs
and $E^\Tt_\alpha$ is equivalent to the concatenation
in \ref{eqn:key_concatenation},
in particular with $\eta=\widetilde{\alpha}$
(computed relative to $\Tt',\Uu'$)
and $E^\Uu_\eta=\core(E^{\Tt'}_\alpha)$.
Therefore $\pred^{\Tt'}(\alpha+1)=\pred^\Uu(\eta+1)=\beta$,
and it is straightforward to see that $\alpha+1\notin\dropset^{\Tt'}_{\deg}$.
Finally, we claim that $\alpha+1=\chi_\eta$
and $\Tt'\rest(\chi_\eta+1)\ins\Tt$.
For if $\chi_\eta<\alpha+1$
then just note that 
$\crit(i^\Uu_{\chi_\eta,\eta_\zeta})<i^\Uu_{\eta+1,\chi_\eta}
(\lambda(E^\Uu_\eta)$,
and if $\alpha+1<\chi_\eta$ then
$\eta_{\xi_0}$ is $\Tt$-special
and 
$i^\Uu_{\eta_{\xi_0}\chi_\eta}(\delta)\leq\crit(i^\Uu_{\chi_\eta\eta_\zeta})$,
and then it easily follows that $\chi_\eta=\chi$,
contradicting that $\chi_\eta<\chi$ (by definition of $b'_0$).
So $\chi_\eta=\alpha+1$,
and it follows that $\Tt'\rest(\chi_\eta+1)\ins\Tt$.

Now let $\gamma$ be a limit with $\eta_\gamma\leq\chi$
and $\eta_\gamma\in(\eta_{\xi_0},\eta_\zeta]_\Uu$,
and suppose we have the claim below $\gamma$.
The fact that $[0,\eta_\gamma]_\Tt$ determines
$[0,\eta_\gamma)_\Uu$ as in Definition \ref{dfn:conversion_J},
together with induction, easily implies
that $[0,\eta_\gamma)_\Tt=b'_0\inter\eta_\gamma$.
\end{proof}

Now suppose that $\chi<\eta_\zeta$.
Then $\eta_{\xi_0}$ is $\Tt$-special,
and letting $\delta=\lgcd(M^\Tt_{\eta_{\xi_0}})$,
arguing as above (and almost by the claim),
we get $\chi'<^\Uu\chi$
such that $M^{\Uu}_{\chi'}=M^\Uu_\chi$
and $\eta_{\xi_0}\leq^\Tt\chi'$
and $\alpha$ is $\Tt$-vs for each $\alpha\in[\chi',\chi)$.
Hence $\chi'=\eta_\alpha$ for some $\alpha$.
Define $\ell_\alpha$ like before (so $\unrvl(\Tt\rest(\eta_\alpha+1))$
has length $\eta_\alpha+1+\ell_\alpha$).
Then $\ell_{\xi_0}=\ell_{\chi'}>\ell_\chi$.
Now let $\xi_1=\chi$. Given $\xi_n$, define $b_n'$ from $\xi_n$
like $b_0'$ was defined from $\xi_0$.
Then there is $n<\om$ with $b_n'$ cofinal in $\eta_\zeta$.

The rest is just like Claim \ref{clm:unrvl(Tt')_at_limit_J}
of Case \ref{case:not_cof_many_transitions_J}.
\end{casetwo}

This completes the proof of the lemma.
\end{proof}

\section{Background constructions for J-mice}

In this section we describe a background construction for J-mice
which is based on the traditional kind for MS-mice,
and absorbs Woodin cardinals from the background universe
just like traditionally for MS-mice. Other than being
based on those for MS-mice, it however incorporates
some features analogous to the construction of \cite[\S5]{premouse_inheriting}. 
(One could probably also go further in that direction,
to obtain a background construction which also absorbs
strong cardinals from the background.)

\begin{dfn}
 Let $M$ be a J-pm. We say that $M$ satisfies the \emph{MS-ISC}
 iff either $M$ is passive, or letting $\kappa=\crit(F^M)$,
 then for every $\nu\in[(\kappa^+)^M,\nu(F^M))$
 such that $\nu$ is the natural length of $\bar{F}=F^M\rest\nu$,
 either:
 \begin{enumerate}
  \item $\bar{F}$ is non-type Z and either:
  \begin{enumerate}
  \item $\bar{F}\in\es^M$, or
  \item $\nu$ is a limit of generators of $F^M$
  and $M|\nu$ is active and
  \[ \bar{F}\in\es(\Ult(M|\nu,F^{M|\nu})),\]
  \end{enumerate}
  or
  \item $\bar{F}$ is type Z, $\nu=\mu+1$,
  $\mu$ is a cardinal of $M$, and there is a normal measure 
$G\in\es(\Ult(M,F^M\rest\mu))$
  with $\crit(G)=\mu$, with
  $\bar{F}=G\com(F^M\rest\mu)$.\qedhere
 \end{enumerate}

\end{dfn}

\begin{dfn}
 Let $\mathscr{E}$ be some class of extenders.
 The \emph{J-$\mathscr{E}$-good maximal $L[\es]$-construction of length 
$\lambda$}
 is the unique sequence $\left<N_\alpha\right>_{\alpha<\lambda}$ of J-pms 
$N_\alpha$
 such that:
 \begin{enumerate}
  \item $N_0=V_\om$,
  \item for limit $\eta<\lambda$, $N_\eta=\liminf_{\alpha<\eta}N_\alpha$,
  \item given $\alpha+1<\lambda$, if $N_\alpha$ has largest cardinal $\theta$
  and there is $N$ such that:
  \begin{enumerate}
   \item\label{item:N_is_J-pm} $N$ is an active J-pm satisfying the MS-ISC,
   \item either $N^\passive=N_\alpha$, or $N_\alpha\pins N$ with
$\OR^{N_\alpha}=(\theta^+)^N$,
   \item $\theta\leq\nu(F^N)<\OR^{N_\alpha}$,
   \item\label{item:F^N_is_bkgdd} there is $E\in\mathscr{E}$ such that 
$F^N\rest\nu(F^N)\sub E$
  and $\mathrm{strength}(E)=\nu(F^N)$,
  \end{enumerate}
then letting $\nu=\nu(F^N)$ be least among all such $N$,
there is a unique such $N$ with 
$\nu(F^N)=\nu$, and this is $N_{\alpha+1}$;
 otherwise there is no $N$ satisfying conditions 
\ref{item:N_is_J-pm}--\ref{item:F^N_is_bkgdd},
and $N_\alpha$ is $\om$-solid and $\om$-universal and
 $N_{\alpha+1}=\J(\core_\om(N))$.
 \end{enumerate}

 So if we reach $\alpha$ such that $N_\alpha$ is non-$\om$-solid
 or non-$\om$-universal,
 or there are two distinct J-pms $N,N'$
 with $\nu(F^N)=\nu(F^{N'})$ minimal for satisfying conditions 
\ref{item:N_is_J-pm}--\ref{item:F^N_is_bkgdd},
 then $\lambda=\alpha+1$. 
\end{dfn}

\begin{lem}\label{lem:card_proj_stages}
 Let $\left<N_\alpha\right>_{\alpha<\lambda}$
 be a J-$\mathscr{E}$-good maximal $L[\es]$-construction.
 Let $\alpha<\lambda$.
 Then:
 \begin{enumerate}
 \item\label{item:proj_stage} Let $M\pins N=N_\alpha$ be such that $\rho_\om^M$ 
is an $N$-cardinal.
 Suppose that if $N$ is active then $\rho_\om^M\leq\nu(F^N)$.
 Then there is a unique $\beta<\lambda$ such that $\core_\om(N_\beta)=M$,
 and moreover, $\beta<\alpha$.
 
\item\label{item:card_stability} If $\beta<\alpha$
 and $\rho_\om^{N_\beta}\leq\rho_\om^{N_\gamma}$ for all 
$\gamma\in(\beta,\alpha)$
 then $\core_\om(N_\beta)\pins N_\alpha$ and $\rho_\om^{N_\beta}$
 is an $N_\alpha$-cardinal.
\end{enumerate} 
\end{lem}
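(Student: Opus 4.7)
The plan is to establish both parts simultaneously by induction on $\alpha<\lambda$, using the induction hypothesis for both statements at all earlier indices. The base case $\alpha=0$ is vacuous, since $N_0=V_\om$ admits no $M\pins N_0$ with $\rho_\om^M$ a genuine $N_0$-cardinal, and part \ref{item:card_stability} has no applicable $\beta$.

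For successor $\alpha=\alpha'+1$, I would split by which construction rule produced $N_\alpha$. If $N_\alpha=\J(\core_\om(N_{\alpha'}))$, then for part \ref{item:proj_stage} any qualifying $M\pins N_\alpha$ either coincides with $\core_\om(N_{\alpha'})$ (in which case $\beta=\alpha'$), or satisfies $M\pins\core_\om(N_{\alpha'})$; in the latter case $M\pins N_{\alpha'}$ (via the solidity/universality properties of $\core_\om$), the hypothesis of part \ref{item:proj_stage} persists at $\alpha'$, and induction supplies $\beta<\alpha'$. For part \ref{item:card_stability} at this step, the assumption $\rho_\om^{N_\beta}\leq\rho_\om^{N_{\alpha'}}$ combined with induction at $\alpha'$ gives $\core_\om(N_\beta)\pins\core_\om(N_{\alpha'})\pins N_\alpha$ as a cardinal segment, since the $\J$-operator merely adds a new Jensen layer on top of $\core_\om(N_{\alpha'})$ without disturbing the cardinal structure below. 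If instead $N_\alpha$ arose from backgrounding an active $N$, I handle the two defining sub-cases ($N^\passive=N_{\alpha'}$ vs.\ $N_{\alpha'}\pins N$ with $\OR^{N_{\alpha'}}=(\theta^+)^N$): the constraint $\rho_\om^M\leq\nu(F^{N_\alpha})$ forces $M$ strictly below the generators of $F^N$, hence $M\ins N^\passive$, and this reduces to either $M = N_{\alpha'}$ (so $\beta = \alpha'$, using that the construction only passes this stage when $N_{\alpha'}$ is already sound) or $M\pins N_{\alpha'}$, where induction at $\alpha'$ applies. Part \ref{item:card_stability} at this step follows from coherence and the fact that $\nu(F^{N_\alpha})$ dominates the largest cardinal of $N_{\alpha'}$, so $\rho_\om^{N_\beta}$ remains an $N_\alpha$-cardinal.

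At a limit $\alpha$, the $\liminf$ definition stabilizes each bounded initial segment of $N_\alpha$ at all sufficiently large $\gamma<\alpha$: an $M\pins N_\alpha$ of fixed height satisfies $M\pins N_\gamma$ for all $\gamma$ past some threshold, and applying induction (part \ref{item:proj_stage}) at any such $\gamma$ produces $\beta<\gamma<\alpha$. Part \ref{item:card_stability} at a limit is immediate from the $\liminf$ formula together with the induction hypothesis applied at every $\gamma\in(\beta,\alpha)$. Uniqueness for part \ref{item:proj_stage} follows because $\OR^{\core_\om(N_\gamma)}$ is strictly increasing in $\gamma$ across J-steps and non-decreasing across active steps, so at most one $\beta$ realizes a given value $\OR^M$; alternatively, apply part \ref{item:card_stability} to rule out that two distinct $\beta_0<\beta_1$ both core to the same $M$.

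The main obstacle is the bookkeeping at the active-extender successor step, where $N_{\alpha+1}$ need not be $\J(\core_\om(N_{\alpha'}))$ but instead extends or sits above $N_{\alpha'}$ in one of two configurations. One must carefully match the MS-ISC shape of $F^N$ against the constraint $\rho_\om^M\leq\nu(F^N)$ to rule out that $M$ coincides with an internal truncation of $F^N$ that was \emph{not} realized as a prior stage of the construction; this exclusion is exactly what underwrites both the existence clause in part \ref{item:proj_stage} at this step and the preservation of $\rho_\om^{N_\beta}$ as an $N_\alpha$-cardinal in part \ref{item:card_stability}.
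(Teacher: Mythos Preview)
Your proposal is correct and follows essentially the same approach as the paper: induction on $\alpha$, splitting the successor step into the active-extender case versus the $\J$-step case, with the key observation that $\theta\leq\nu(F^{N_\alpha})$ forces $\rho_\om^M\leq\theta$ and hence $M\ins N_{\alpha'}$. One small correction: in the active case when $M=N_{\alpha'}$, the reason $N_{\alpha'}$ is sound is not that the construction checked this before advancing (it does not, in the active branch), but rather that $N_{\alpha'}\pins N_\alpha$ is a proper initial segment of a J-premouse and hence automatically $\om$-sound.
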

\begin{proof}
 By induction on $\alpha$. Part \ref{item:card_stability} is proved as usual, so 
we omit it.
 
 Suppose $\alpha=\beta+1$.

 If $N_\alpha$ is active,
 just apply induction to $N_\beta$; this works as
 $\theta\leq\nu(F^{N_\alpha})$ where $\theta$ is the largest cardinal of 
$N_\beta$
 (this is the key difference to the standard version of this 
lemma).
 Otherwise $N_\alpha=\J(\core_\om(N_\beta))$.
 So if $M\pins N_\alpha$ and $\rho=\rho_\om^M$ is an $N_\alpha$-cardinal
 then $\rho\leq\rho_\om^{N_\beta}$.
 Use induction and universality of the standard parameter basically as usual,
 noting that if $N_\beta$ is active then 
$\rho_\om^{N_\beta}\leq\nu(F^{N_\beta})$.
 
 The limit case is as usual. 
\end{proof}

\begin{dfn}
A \emph{reasonable structure}
is a transitive structure $R=(S,\mathscr{E})$
such that $S$ is transitive, $R\sats\ZFC$,
$\mathscr{E}\sub S$ is a class of $S$-extenders,
and for each $E\in\mathscr{E}$ and $R$-cardinal $\lambda$,
we have $E\rest\lambda\in\mathscr{E}$.
\footnote{$\ZFC$ itself is not particularly important;
it will be clear that we could make do with much less.};

Definability over reasonable structures $R=(S,\mathscr{E})$
is with respect to the predicate $\mathscr{E}$, including the fact that 
$R\sats\ZFC$.
And iterability for such $R$
is for iteration trees which only use extenders
from $\mathscr{E}$ and its images.
\end{dfn}

\begin{dfn}
Let $R=(S,\mathscr{E})$ be an $(\om_1+1)$-iterable  reasonable structure.

Let $\left<N_\alpha\right>_{\alpha<\lambda}$ 
be a J-$\mathscr{E}$-good maximal $L[\es]$-construction of $R$
and $\alpha<\lambda$ and $m<\om$ be such that
$N_\alpha$ is $m$-solid, and suppose that all proper segments
of $N_\alpha$ satisfy standard condensation facts. Let 
$N=\core_m(N_\alpha)$.
Let $\pi:M\to N$ be an $m$-lifting embedding.
Let $E\in\es_+^M$ be such that the reverse-e-dropdown of $(M,E)$
is just $\left<E\right>$.
Let $\left<M_i\right>_{i\leq k}$ be the reverse-mod-dropdown of $(M,M|\lh(E))$.
We define the \emph{$(\alpha,m,\pi,E)$-resurrection}
\[ \left<\alpha_j,m_j,\pi_j,\sigma_j\right>_{i\leq k} \]
of $R$, with:
\begin{enumerate}[label=--]
 \item  $\alpha_j\leq\alpha$ and $m_j\leq\om$,
 \item $\pi_j:M_j\to\core_{m_j}(N_{\alpha_j}^R)$
is an  $m_j$-lifting embedding,
\item $(\alpha_0,m_0,\pi_0)=(\alpha,m,\pi)$,
\item if $j>0$ and $m_j<\om$ then $\rho_{m_j+1}^{M_j}=\rho_\om^{M_j}$,
\item  $\sigma_j=\tau^R_{\alpha_jm_j0}\com\pi_j:M_j\to N_{\alpha_j}^R$,
where
$\tau^R_{\beta k\ell}:\core_k(N_\beta^R)\to\core_\ell(N_\beta^R)$
is the core embedding,
\item if $j<k$ then $\sigma_j\rest\rho_\om^{M_{j+1}}\sub\pi_{j+1}$.
\end{enumerate}

Suppose we have defined $\alpha_j,m_j,\pi_j,\sigma_j$, 
and 
$k>j$.
We have
$\sigma_j:M_j\to N_{\alpha_j}^R$.
Let $\rho=\rho_\om^{M_{j+1}}$. So $\rho$ is an $M_j$-cardinal,
and if $M_j$ is active then $\rho\leq\nu(F^{M_j})$.

If $M_j$ is passive or $\sigma_j(\rho)\leq\nu(F(N_{\alpha_j}^R))$
then by \ref{lem:card_proj_stages} we can set $\alpha_{j+1}$
to be the unique $\alpha'$ such that
$\sigma_j(M_{j+1})=\core_\om(N_{\alpha'}^R)$,
and set $\pi_{j+1}=\sigma_j\rest M_{j+1}$
and $m_{j+1}=\om$.

Suppose $M_j$ is active and
$\sigma_j(\rho)>\nu=\nu(F(N_{\alpha_j}^R))$.
Let $m<\om$ be least such that $\rho_{m+1}^{M_{j+1}}=\rho$.
Let $M^+=\sigma_j(M_{j+1})$,
\[ \bar{M}=\cHull_{m+1}^{M^+}(\nu\cup\{\pvec_{m+1}^{M^+}\}) \]
and $\bar{\pi}:\bar{M}\to M^+$ be the uncollapse,
noting that $\bar{M}$ is $(m+1)$-sound
with $\nu=\rho_{m+1}^{\bar{M}}$,
because $\bar{M}\in N^R_{\alpha_j}$,
where $\nu$ is a cardinal,
and $\sigma_j``M_{j+1}\sub\rg(\bar{\pi})$.
By condensation, $\bar{M}\pins N^R_{\alpha_j}$,
and $\rho_\om^{\bar{M}}=\nu$. Therefore we can set $\alpha_{j+1}$ to be
the unique $\alpha'$ such that
$\bar{M}=\core_\om(N_{\alpha'}^R)$,
and set  $m_{j+1}=m$ and
\[ \pi_{j+1}=\bar{\pi}^{-1}\com\sigma_j\rest M_{j+1}.\qedhere\]
\end{dfn}

\begin{tm} Let $R=(S,\mathscr{E})$ be an $(\om_1+1)$-iterable  reasonable 
structure.

Then $R$ has a J-$\mathscr{E}$-good maximal $L[\es]$-construction 
$\left<N_\alpha\right>_{\alpha<\lambda}$ of length $\lambda=\OR^R+1$,
and for every $\alpha<\lambda$ and $m<\om$,
$\core_m(N_\alpha)$ is J-$(m,\om_1,\om_1+1)^*$-iterable. The final model 
$N_\lambda$ is 
a proper class of $R$ which models $\ZFC$.

Moreover, if $\delta\in\OR^R$ and $R\sats$``$\delta$ is Woodin as witnessed by 
extenders in $\mathscr{E}$'',
then $N_\lambda\sats$``$\delta$ is Woodin''.
\end{tm}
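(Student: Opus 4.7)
The plan is to proceed by simultaneous induction on $\alpha<\OR^R+1$, establishing at each stage that $N_\alpha$ is $\om$-solid, $\om$-universal, satisfies the standard condensation facts, and that $\core_m(N_\alpha)$ is J-$(m,\om_1,\om_1+1)^*$-iterable for every $m<\om$. The condensation/solidity/universality clauses are standard consequences of iterability (and rule out the two degenerate stopping conditions in the definition of a J-$\mathscr{E}$-good construction), so the construction does not freeze and reaches length $\OR^R+1$. That $N_\lambda$ is a proper class follows because $\OR^{N_\alpha}$ is cofinal in $\OR^R$ (each successor stage adds ordinals via $\J$, and backgrounded extenders lie unboundedly in $\OR^R$), and $N_\lambda\sats\ZFC$ follows by standard arguments since $N_\lambda$ is a definable proper class of $R\sats\ZFC$.

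The main work is iterability. Given $N=\core_m(N_\alpha)$, I would construct a J-$(m,\om_1,\om_1+1)^*$-strategy $\Sigma_N$ by reducing to the strategy $\Sigma_R$ on $R$ via the conversion machinery of Section \ref{sec:J_conversion}. Given a J-$m$-maximal tree $\Tt$ on $N$ produced by $\Sigma_N$ so far, first pass to $\Uu=\conv(\Tt)$, which is lifting-MS-$m$-maximal on $N$ by Lemma \ref{lem:conversion_exists_J}. Then inductively lift $\Uu$ to an iteration tree $\Uu^*$ on $R$: at each stage $\beta+1<\lh(\Uu)$ with $E^\Uu_\beta\neq\emptyset$, apply the $(\alpha_\beta,m_\beta,\pi_\beta,E^\Uu_\beta)$-resurrection from the previous section to locate a background extender $E^*_\beta\in\mathscr{E}$ for the resurrected image of $E^\Uu_\beta$. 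The crucial point is that backgrounding in the J-$\mathscr{E}$-good construction only requires $\mathrm{strength}(E^*)=\nu(F^N)$ rather than $\lambda(F^N)$-strength; this is exactly what makes the lifting of lifting-MS-trees work, because when $E^\Uu_\beta$ is an auxiliary extender from a rev-e-dropdown, its natural length is $\nu(E^\Uu_\beta)<\lambda(E^\Uu_\beta)$, and $\nu$-strength on $\mathscr{E}$ suffices to apply the resurrected extender inside $R$. At a limit $\eta<\lh(\Tt)$, the strategy $\Sigma_N$ returns the branch determined by $\Sigma_R(\Uu^*)$: we pull back via Lemma \ref{lem:conversions_cover_J} to a branch of $\Uu$, then via Theorem \ref{tm:lifting-iter_equiv} to a branch of $\Tt$.

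The main obstacle I expect is the bookkeeping for the lifting of $\Uu$ to $\Uu^*$: one must verify that the embeddings $\pi_\beta\colon M^\Uu_\beta\to N^{\Uu^*}_{\beta^*}$ satisfy the agreement properties needed to continue the construction, that drops in model and degree in $\Uu$ align with drops in $\Uu^*$, that the resurrection correctly handles the rev-e-dropdown extenders of $\Uu$, and that all this is compatible with the conversion/deconversion between $\Tt$ and $\Uu$. This is precisely the kind of bookkeeping already carried out (in a simpler setting) in Lemmas \ref{lem:conversion_exists_J} and \ref{lem:conversions_cover_J}, and is a direct adaptation of the lifting argument for backgrounded MS-constructions, using the rev-e-dropdown as the bookkeeping scaffold for the extra steps required by J-indexing.

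Finally, for Woodin absorption, suppose $\delta\in\OR^R$ is Woodin in $R$ as witnessed by $\mathscr{E}$. Given $A\sub\delta$ in $N_\lambda$, I would pick $\kappa<\delta$ and $E\in\mathscr{E}$ with $\crit(E)=\kappa$ reflecting $A$ to some target $\gamma<\delta$, as guaranteed by the Woodinness of $\delta$ in $R$. The key observation, as in \cite[\S5]{premouse_inheriting}, is that at some stage $\alpha<\delta$ of the J-$\mathscr{E}$-good construction, the conditions for adding an active extender $F$ with $\crit(F)=\kappa$, $F\rest\nu(F)\sub E$ and $\nu(F)\geq\gamma$ are met, because in the J-indexed setting we only require background strength $\nu(F)$ (not $\lambda(F)$), so the same witnessing extender $E$ can be used for J-mice without needing a stronger background. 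The induced extender on the $N_\lambda$-sequence then reflects $A$ within $N_\lambda$, witnessing $A$-reflection at $\kappa$ in $N_\lambda$ as required. Since $A$ was arbitrary, $N_\lambda\sats$``$\delta$ is Woodin''.
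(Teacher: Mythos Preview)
Your overall strategy is correct and close to the paper's, but there are two genuine omissions and one unnecessary detour.

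\textbf{The detour.} You propose to take a J-tree $\Tt$, convert it to a lifting-MS tree $\Uu$, and then lift $\Uu$ to $R$. The paper does something cleaner: it invokes Theorem~\ref{tm:lifting-iter_equiv} once to reduce J-iterability to lifting-MS-iterability, and then lifts lifting-MS trees directly to $R$ (no conversion step inside the lifting argument). Your route works, but the extra layer is not needed. Also, your reference to Lemma~\ref{lem:conversions_cover_J} for ``pulling back to a branch of $\Uu$'' is misplaced; that lemma is the lifting-MS-to-J direction, not the $R$-to-$N$ pullback, which is just the standard copying pullback. Relatedly, the paper's key observation about the lifting is not quite the $\nu$-strength point you highlight, but rather that the \emph{resurrection} is well-defined: the definition of $(\alpha,m,\pi,E)$-resurrection requires the rev-e-dropdown of $(M,E)$ to be just $\langle E\rangle$, and this holds precisely because in a lifting-MS tree the rev-e-dropdown has already been unrolled step by step.

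\textbf{Omission 1: uniqueness of the next extender.} You write that condensation/solidity/universality ``rule out the two degenerate stopping conditions''. Only one of them is so ruled out. The other is the existence of two distinct active J-pms $N,N'$ with $\nu(F^N)=\nu(F^{N'})$ minimal satisfying the backgrounding conditions; this is \emph{not} a consequence of solidity/universality/condensation. The paper handles it by a bicephalus comparison of $B=(N,N',\nu)$ with itself, in the lifting-MS framework, noting the nonstandard features that $N,N'$ only agree below $(\nu^+)^N$ and that the comparison must use the rev-e-dropdown when selecting extenders above that agreement level. You need to say something here.

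\textbf{Omission 2: MS-ISC for pseudo-premice.} For the Woodin absorption, the argument of \cite[\S11]{fsit} requires that the candidate backgrounded extenders one produces actually yield J-premice satisfying the MS-ISC (condition~\ref{item:N_is_J-pm} of the construction). The paper isolates this as a separate fact: a lifting-MS-iterable pseudo-J-premouse satisfies the MS-ISC, with proof essentially as in \cite[\S10]{fsit}. Your Woodin paragraph jumps straight from ``the backgrounding conditions are met'' to ``the induced extender is on the $N_\lambda$-sequence'', skipping exactly this verification.
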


\begin{proof}[Proof sketch]
The overall proof is mostly standard, with just some little
differences. We describe enough to mention these
and give the main structure; the remaining details
will be very routine.

 We first consider the iterability.
By Theorem \ref{tm:lifting-iter_equiv} it suffices to see that 
$M=\core_m(N_\alpha)$ is 
lifting-MS-$(m,\om_1+1)$-iterable.
Fix an $(\om_1+1)$-strategy $\Gamma$ for $R$ (with respect to the class 
$\mathscr{E}$).
We will define a lifting-MS-$(m,\om_1+1)$ strategy
$\Sigma$ for $M$, lifting to trees on $R$ via $\Gamma$.
We mostly keep track of the usual kind of data and maintain the usual kinds of 
inductive properties.
But some details, particularly to do with resurrection, are a little different 
than usual. Write $\CC=\left<N^R_\alpha\right>_{\alpha<\lambda}$,
where $\lambda$ is as large as possible that this is defined.
Fix $\xi<\lambda$, $m\leq\om$ such that $N^R_\xi$
is $m$-solid,
an $m$-sound premouse $M$ and an $m$-lifting embedding
$\pi:M\to\core_m(N^R_\xi)$.

We will define trees $\Tt,\Uu$ on $M,R$ respectively,
with $\Tt$ being lifting-MS-$m$-maximal
and $\Uu$ a coarse tree, via $\Gamma$,
with models $M_\alpha=M^\Tt_\alpha$
and $R_\alpha=R^\Uu_\alpha$,
degrees $d_\alpha=\deg^\Tt_\alpha$,
and embeddings $i_{\alpha\beta}=i^\Tt_{\alpha\beta}$,
$i^*_\alpha=i^{*\Tt}_\alpha$,
$j_{\alpha\beta}=i^\Uu_{\alpha\beta}$,
$j^*_\alpha=i^{*\Uu}_\alpha$, and
for $\alpha<\lh(\Tt)$, the objects
 $\CC_\alpha,\xi_\alpha,d_\alpha,\pi_\alpha,
 \xi^*_\alpha,\pi^*_\alpha,\sigma_\alpha$,
such that:
\begin{enumerate}
 \item ${<_\Tt}={<_\Uu}$,
 \item $\CC_\alpha=i^\Uu_{0\alpha}(\CC)$ and $\xi_\alpha<\lh(\CC_\alpha)$,
 \item   $(\xi_0,\pi_0)=(\xi,\pi)$ (and $d_0=m$),
 \item $\pi_\alpha:M_\alpha\to \core_{d_\alpha}(N_{\xi_\alpha}^{R_\alpha})$ is a
 $d_\alpha$-lifting embedding,
 \item If $\alpha$ is  a successor 
 and $\beta=\pred^\Tt(\alpha)$ and $\alpha\notin\dropset^\Tt$
 then $\xi^*_\alpha=\xi_\beta$
 and
 \[ \pi^*_\alpha=\tau^{R_\beta}_{\xi_\beta d_\beta d_\alpha}:
  M^{*\Tt}_\alpha\to\core_{d_\alpha}(N^{R_\beta}_{\xi^*_\alpha}).
 \] 
\item If $\alpha$ is a successor and $\beta=\pred^\Tt(\alpha)$
and $\alpha\in\dropset^\Tt$, then letting
\[ (\widetilde{\alpha}_i,\widetilde{m}_i,\widetilde{\pi}_i,
\widetilde{\sigma}_i)_{i\leq k}
\]
be the $(\xi_\beta,d_\beta,\pi_\beta,E^\Tt_\beta)$-resurrection
of $R_\beta$, and $\left<\widetilde{M}_i\right>_{i\leq k}$
be the rev-mod-dropdown of $(M^\Tt_\beta,E^\Tt_\beta)$,
and $i$ such that $M^{*\Tt}_\alpha=\widetilde{M}_i$,
then $\xi^*_\alpha=\widetilde{\alpha}_i$ and
\[ \pi^*_\alpha=\tau^{R_\beta}_{\xi^*_\alpha\widetilde{m}_i d_\alpha}
 \com\widetilde{\pi}_i: M^{*\Tt}_\alpha\to\core_{d_\alpha}
 (N^{R_\beta}_{\xi^*_\alpha}),
\]
noting that $d_\alpha\leq\widetilde{m}_i$
since $\rho_{\widetilde{m}_i+1}(M^{*\Tt}_\alpha)\leq\crit(E^\Tt_\alpha)<
\rho_{\widetilde{m}_i}(M^{*\Tt}_\alpha)$.

\item If $\alpha+1<\lh(\Tt)$ then letting 
$(\widetilde{\alpha}_i,\widetilde{m}_i,\widetilde{\pi}_i,
 \widetilde{\sigma}_i)_{i\leq k}$
be the $(\xi_\alpha,d_\alpha,\pi_\alpha,E^\Tt_\alpha)$-resurrection
of $R_\alpha$, then
\[ \sigma_\alpha=\widetilde{\sigma}_k:\exit^\Tt_\alpha\to 
N^{R_\alpha}_{\widetilde{\alpha}_k},\]
a $0$-lifting embedding,
and $E^\Uu_\alpha\in\mathscr{E}^{R_\alpha}$ is a background
for $F^N$ where $N=N^{R_\alpha}_{\widetilde{\alpha}_k}$,
and $R_\alpha\sats$``$E^\Uu_\alpha$ has
strength exactly $\nu(F^N)$''.
\item If $\alpha+1<\lh(\Tt)$
and $\beta=\pred^\Tt(\alpha+1)$
then $\xi_{\alpha+1}=i^\Uu_{\beta,\alpha+1}(\xi^*_{\alpha+1})$
and
\[ 
\pi_{\alpha+1}
\Big(\Big[a,f^{M^*}_{\tau q}\Big]^{M^{*}}_
{E^\Tt_\alpha}\Big)=
\Big[\sigma_\alpha(a),f^{M'}_{\tau
q'}\Big]^{R_\beta}_{E^\Uu_\alpha},\]
where $M'=\core_{d_{\alpha+1}}(N^{R_\beta}_{\xi^*_{\alpha+1}})$ and 
$q'=\pi^*_{\alpha+1}(q)$,
$\tau$ is an $\rSigma_{d_{\alpha+1}}$ term,
and for $N$ a $d_{\alpha+1}$-sound premouse
and $q\in N$, $f_{\tau q}^N$ is the function
$x\mapsto\tau^N(q,x)$.

Here letting $\kappa=\crit(E^\Tt_\alpha)$,
we have
$\sigma_\alpha\rest\pow(\kappa)\inter\exit^\Tt_\alpha=
 \pi^*_\alpha\rest\pow(\kappa)\inter M^{*\Tt}_\alpha$.
 \item If $\alpha<\beta<\lh(\Tt)$ and $\nu=\nu(E^\Tt_\alpha)$
 then
\[ 
\sigma_\alpha\rest(M^\Tt_\alpha|\nu)\sub\pi_{\beta}
\text{ and }\sigma_\alpha(\nu)\leq\pi_\beta(\nu),\sigma_\beta(\nu)\]
and if $\nu$ is not a cardinal of $\exit^\Tt_\alpha$
and $\iota=(\nu^+)^{\exit^\Tt_\alpha}$ 
then
\[ \sigma_\alpha\rest(M_\alpha|\iota)\sub\pi_{\beta}\text{
and }\sigma_\alpha(\iota)\leq\pi_\beta(\iota),\sigma_\beta(\iota).\]
 
\item 
If $\alpha$ is  a successor then $\pi_\alpha\com 
i^{*\Tt}_\alpha=i^{*\Uu}_\alpha\com\pi^*_\alpha$.
\item If $\alpha<^\Uu\beta$ and 
$(\alpha,\beta]_\Tt\inter\dropset^\Tt=\emptyset$
then $i^\Uu_{\beta\alpha}
(\xi_\beta)=\xi_\alpha$
 and
 \[ \pi_\alpha\com 
i^\Tt_{\beta\alpha}=
i^\Uu_{\beta\alpha}\com
\tau^{R_\beta}_{\xi_\beta d_\beta d_\alpha}\com \pi_\beta=
\tau^{R_\alpha}_{\xi_\alpha d_\beta d_\alpha}\com
i^\Uu_{\beta\alpha}\com\pi_\beta.\]
\end{enumerate}
This determines the entire process, and the propagation
of the inductive hypotheses is quite routine,
so we leave it to the reader.
(As usual, one maintains in fact a little more agreement
between maps $\pi_\alpha,\pi_\beta$, etc,
than that stated above.) There is a very similar
construction
done in detail in \cite[\S5]{premouse_inheriting}.

But let us remark that a key point is that the resurrections referred to above 
are well-defined,
because the rules of MS-lifting-$m$-maximal 
trees ensure that the rev-e-dropdown
of $(M^\Tt_\beta,E^\Tt_\beta)$
is just $\left<E^\Tt_\beta\right>$. (That is,
 suppose
we have produced $\Tt,\Uu$ through length $\beta+1$,
and $E\in\es_+(M^\Tt_\beta)$, and
 $\lh(E^\Tt_\alpha)<\lh(E)$ for all \emph{stable} $\alpha$,
 that is, all $\alpha$
such that $\lh(E^\Tt_\alpha)\leq\lh(E^\Tt_\gamma)$
for all $\gamma\geq\alpha$.
Let $\left<G_i\right>_{i\leq k}$
be the rev-e-dropdown of $(M^\Tt_\beta,E)$, so $G_k=E$.
Then when we set $E^\Tt_{\beta+i}=G_i$ for $0\leq i\leq k$,
note that the rev-e-dropdown of $(M^\Tt_{\beta+i},G_i)$
is just $\left<G_i\right>$.)

(Also, we are assuming that $R$ is $(\om_1+1)$-iterable,
meaning without restriction on the form of the trees.
But for our purposes here, it actually suffices to assume
that $R$ is $(\om_1+1)$-iterable for stacks of normal trees.
For  lifting-MS-$m$-maximal trees on $M$
lift to normal trees on $R$ by the conditions specified above,
and then this extends to stacks as usual;
this uses Lemma \ref{lem:nu_inc}.)

This gives that the construction does not break down
due to cores not existing, but we also need to see
it does not break down due to non-uniqueness of next extenders.
For this, use a typical bicephalus comparison.
This is basically like in \cite{fsit}, but slightly different,
with features as in the bicephalus arguments
in \cite{premouse_inheriting}, which the reader should
consult for more details. We just give a sketch here.
Suppose we reach a passive model $N_\alpha$,
and there are two plausible active backgrounded extensions $P,Q$ of $N_\alpha$ 
(in the sense of the construction) with $\nu=\nu(F^P)=\nu(F^Q)$.
Then let $B=(P,Q,\nu)$,
and compare $B$ with itself, like in a standard bicephalus
comparison. (But note we only know that $P,Q$ agree (strictly) below
$(\nu^+)^P=(\nu^+)^Q=\OR^{N_\alpha}$, and possibly $\OR^{N_\alpha}<\OR^P$ and/or
$\OR^{N_\alpha}<\OR^Q$.) We get the lifting-MS-$0$-maximal
iterability of $B$ just like in the proof above. Here
if $B'=(P',Q',\nu')$ is a non-dropping iterate of $B$,
write $\nu'=\sup i``\nu=\sup j``\nu$ where $i:P\to P'$ and $j:Q\to Q'$
are  the iteration 
maps, so $\nu'=\nu(F^{P'})=\nu(F^{Q'})$
and $P'||((\nu')^+)^{P'}=Q'||((\nu')^+)^{Q'}$.
If we want to use $E\in\es_+^{P'}$ with 
$\OR^{N_\alpha}\leq\lh(E)$,
then we use the rev-e-dropdown
of $(P',E)$ to determine the next few extenders. Likewise
for $E\in\es_+^{Q'}$.\footnote{Further
related bicephalus arguments can be seen in \cite{premouse_inheriting}.} We 
don't need
to convert this to J-$m$-maximal iterability,
because it is straightforward to see that lifting-MS-$m$-maximal
iterability is enough for the comparison argument.
Note that if we are at a stage $\alpha$ (such as $\alpha=0$)
of the comparison, with trees $\Tt,\Uu$,
and $M^\Tt_\alpha=B'=(P',Q',\nu')=M^\Uu_\alpha$ is a common bicephalus,
then $P'\neq Q'$, $((\nu')^+)^{P'}=((\nu')^+)^{Q'}$
and $P',Q'$ project to $\nu'$, so $P'\nins Q'\nins P'$.
So there is a least difference $E,F$
with say $E\in\es_+^{P'}$ and/or $F\in\es_+^{Q'}$.
We then want to use $E,F$, but this is preceded
by the rev-e-dropdown of $(P',E)$ and that of $(Q',F)$.
Note here that $F^{P'}$ and $F^{Q'}$ are included in these
(since if $\nu'<\lgcd(P')$ then $((\nu')^+)^{P'}<\lh(E)$ etc).
Now a little further consideration shows that  there are no
$\alpha,\beta$ such that $E^\Tt_\alpha\neq\emptyset\neq E^\Uu_\beta$ and 
$E^\Tt_\alpha,E^\Uu_\beta$ are compatible,
and this leads to contradiction as usual.

We also need the fact that iterable pseudo-J-premice
satisfy the MS-ISC. Here
 a structure $P=(N,F)$
is a \emph{pseudo-J-premouse} iff $N$ is a passive
J-premouse, $F$ an extender over $N$,
and $P$ satisfies the conditions of being a J-premouse,
and letting $\nu=\nu(F)$ and $\delta=\card^N(\nu)$,
then $F\rest\delta\in\es^P_+$.  Suppose $P$ is such
and is lifting-MS-$(0,\om_1,\om_1+1)^*$-iterable,
but $F$ is not type Z.
Then $P$ satisfies the MS-ISC;
the proof of this is almost identical to that in
\cite[\S10]{fsit}. (Actually lifting-MS-$(0,\om_1+1)$-iterability 
is enough, by essentially
the argument in \cite[Theorem 9.4]{fsfni}.)

The fact that Woodins of $R$ are Woodin
in $N=N^R_{\OR^R}$ now follows by the argument in \cite[\S11]{fsit}. The 
pseudo-J-premice which come up are sufficiently iterable
by the proof above.

This completes the proof.
\end{proof}

\section{Tree conversion for MS-mice}\label{sec:MS_translation}

In this section we detail a conversion procedure
for trees on MS-premice, which is very similar to
that for J-premice.
It is used as a black box in \cite[2.12, 2.14***]{iter_for_stacks},
so this section fills in the missing details from there.
We formally assume the reader is familiar
with \S\ref{sec:J_conversion},
For the most part
we  actually give a complete account, independent
of \S\ref{sec:J_conversion};
this just excludes the 
version of Lemma \ref{lem:conversions_cover_J},
which we leave to the reader (and given what we do describe,
it is an easy exercise to fill that in).

We assume the reader is familiar with the definitions
and basic facts in \cite[\S 
\S2***]{iter_for_stacks}. We aim to prove \cite[Lemmas 2.12, 
2.14***]{iter_for_stacks}.

\begin{dfn}
We say
 $(M,k)$ 
is \emph{suitable} iff either:
\begin{enumerate}[label=--]
 \item $M$ is an MS-premouse of type $\leq 2$
 and $M$ is $k$-sound,
or
\item $M$ is a type 3 MS-premouse and $k\geq 1$
and $M$ is $(k-1)$-sound,
or
\item $k=1$ and $M$ is a $\udash 1$-sound internally MS-indexed active seg-pm
(note then letting
$\nu=\nu(F^M)$, $M^\pm$ is the premouse
with the trivial completion of $F^M\rest\nu$
active 
and $M||(\nu^+)^M=(M^\pm)^\passive$).
\end{enumerate}

Let $(M,k)$ be suitable. Let $\Tt$
be a  padded $\udash k$-maximal tree on $M$.
Given $\alpha<\lh(\Tt)$, say  $\alpha$ is:
\begin{enumerate}[label=--]
\item \emph{$\Tt$-special} iff
$M^\Tt_\alpha$ is a non-premouse
and $\udeg^\Tt_\alpha=0$ (equivalently,
$M^\Tt_\alpha$ fails the MS-ISC),
\item \emph{$\Tt$-very-special ($\Tt$-vs)} iff
$\Tt$-special and $E^\Tt_\alpha=F(M^\Tt_\alpha)$.
\item \emph{$\Tt$-pre-special ($\Tt$-ps)} iff
$M^\Tt_\alpha$ is a non-premouse and
$\udeg^\Tt_\alpha=1$,
\item \emph{$\Tt$-very-pre-special ($\Tt$-vps)}
iff $\Tt$-pre-special and $E^\Tt_\alpha=F(M^\Tt_\alpha)$.
\end{enumerate}
Say $\Tt$ is \emph{nicely padded}
iff:
 \begin{enumerate}
\item If $\alpha$ is $\Tt$-pre-special
and not of form
  $\alpha=\beta+1$ with $E^\Tt_\beta=\emptyset$,
then either $E^\Tt_\alpha=\emptyset$
or $\lh(E^\Tt_\alpha)<\OR((M^\Tt_\alpha)^\pm)$.
\item If $\alpha+1<\lh(\Tt)$ and $E^\Tt_\alpha=\emptyset$
then $\alpha$ is  $\Tt$-pre-special
and not of form $\alpha=\beta+1$ with $E^\Tt_\beta=\emptyset$,
and moreover, $\pred^\Tt(\alpha+1)=\alpha$ and $M^\Tt_{\alpha+1}=M^\Tt_\alpha$
and $\deg^\Tt(\alpha+1)=1$ 
and $\alpha+2<\lh(\Tt)$ and
$\OR((M^\Tt_\alpha)^\pm)<\lh(E^\Tt_{\alpha+1})$.
\item If $E^\Tt_\alpha\neq\emptyset$ then define
$\widetilde{\nu}^\Tt_\alpha=\iota(\exit^\Tt_\alpha)$,
and if $E^\Tt_\alpha=\emptyset$ then define 
$\widetilde{\nu}^\Tt_\alpha=\nu(F(M^\Tt_\alpha))$.
We use $\widetilde{\nu}^\Tt_\alpha$ as the exchange ordinal
associated to $\alpha$ in $\Tt$; that is,
if $E^\Tt_\beta\neq\emptyset$ then $\pred^\Tt(\beta+1)$
is the least $\alpha$ such that $\crit(E^\Tt_\beta)<\widetilde{\nu}^\Tt_\alpha$.
\end{enumerate}
If $\alpha+1<\lh(\Tt)$, we say that $\alpha$ is a  \emph{transition point}
of $\Tt$ iff $E^\Tt_\alpha=\emptyset$.

We say that $(M,k,\Tt)$ is \emph{suitable}
iff $(M,k)$ is suitable and $\Tt$ is a nicely padded
$\udash k$-maximal tree on $M$.
\end{dfn}

\begin{lem}\label{lem:core_appears}
 Let $(M,k,\Tt)$ be suitable.
Let $\gamma$ be $\Tt$-special.
Then there is a unique $\beta<\gamma$
such that  $\udeg^\Tt(\beta)=1$
(so $\beta$ is non-$\Tt$-special) and
\[ \core(F(M^\Tt_\gamma))=F((M^\Tt_\beta)^\pm) \]
and $E^\Tt_\beta\neq\emptyset$. Moreover, letting $F=\core(F(M^\Tt_\gamma))$
and $\nu=\nu_F$:
\begin{enumerate}
 \item $\beta=\alpha+1$ where $\alpha$ is a transition point of $\Tt$
 \item $\widetilde{\nu}^\Tt_\alpha=\nu$ and $\widetilde{\nu}^\Tt_{\alpha'}<\nu$
for all $\alpha'<\alpha$
and $\lh(E^\Tt_{\alpha'})\leq\nu$ for all $\alpha'<\alpha$ such that
$E^\Tt_{\alpha'}\neq\emptyset$,
 \item $\alpha+1<^\Tt\gamma$; let $\xi+1=\successor^\Tt(\alpha+1,\gamma)$,
 \item $\udeg^\Tt_{\xi+1}=\udeg^\Tt_\gamma=0$ and 
$(\alpha+1,\gamma]_\Tt\inter\dropset^\Tt=\emptyset$,
 \item\label{item:ext_assoc} $\nu\leq\crit(E^\Tt_\xi)$,
and $F(M^\Tt_\gamma)$ is equivalent to the composition
of the extenders 
\[ \left<F\right>\conc\left<E^\Tt_\delta\right>_{\delta+1\in[\xi+1,\gamma]
_\Tt}\]
(and for $\delta_0,\delta_1\in[\xi+1,\gamma]_\Tt$
with $\delta_0<\delta_1$, we have 
$\nu(E^\Tt_{\delta_0})\leq\crit(E^\Tt_{\delta_1})$).
\end{enumerate}
\end{lem}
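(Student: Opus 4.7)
The plan is to prove this by induction on $\gamma$, in parallel with the argument for Lemma \ref{lem:core_appears_J}, but adapted to the fact that here ``$\Tt$-special'' is defined intrinsically ($M^\Tt_\gamma$ is a non-premouse with $\udeg^\Tt_\gamma=0$) rather than via an external witness. The central intuition is that non-premouse-ness can only be introduced into $\Tt$ at a transition point $\alpha$, where by the nicely padded conditions $E^\Tt_\alpha=\emptyset$, $\pred^\Tt(\alpha+1)=\alpha$, $M^\Tt_{\alpha+1}=M^\Tt_\alpha$, $\deg^\Tt(\alpha+1)=1$, and $\OR((M^\Tt_\alpha)^\pm)<\lh(E^\Tt_{\alpha+1})$; this non-premouse-ness is then propagated by subsequent non-dropping extender applications whose critical points sit suitably high.

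First I would dispose of the easy cases. If $\gamma=\delta+1$ with $E^\Tt_\delta=\emptyset$, the nicely padded conditions force $\deg^\Tt(\gamma)=1$, hence $\udeg^\Tt_\gamma=1$, contradicting $\Tt$-specialness; so this cannot occur. If $\gamma$ is a limit, then $\udeg^\Tt_\gamma$ and the non-premouse-ness of $M^\Tt_\gamma$ are determined along cofinal tails of $[0,\gamma)_\Tt$, so there is $\xi<^\Tt\gamma$ with $\xi$ $\Tt$-special and $(\xi,\gamma)_\Tt\cap\dropset^\Tt=\emptyset$; applying the inductive hypothesis to $\xi$ produces the desired $\beta$, and the iteration map $i^\Tt_{\xi\gamma}$ transports $\core(F(-))$ and extends the composition in item \ref{item:ext_assoc} accordingly.

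The main case is $\gamma=\delta+1$ with $E^\Tt_\delta\neq\emptyset$. Let $\eta=\pred^\Tt(\gamma)$. Non-premouse-ness of $M^\Tt_\gamma$ with $\udeg=0$ arises in two subcases. In subcase (i), $M^{*\Tt}_\gamma$ already has these properties and they are inherited through the ultrapower, so $\gamma\notin\dropset^\Tt$ and $\eta$ is itself $\Tt$-special; induction on $\eta$ gives a witness $\beta$ which then works for $\gamma$, extending item \ref{item:ext_assoc} by one further extender $E^\Tt_\delta$, the required agreement $\nu(E^\Tt_{\delta_0})\leq\crit(E^\Tt_\delta)$ being automatic from the $\udash k$-maximal rules using $\widetilde{\nu}^\Tt$. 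In subcase (ii), $M^{*\Tt}_\gamma$ is still a premouse but the ultrapower creates a non-premouse; I would argue that this forces $M^{*\Tt}_\gamma=M^\Tt_\eta$ and $\eta=\alpha+1$ for some transition point $\alpha$, so that $\udeg^\Tt(\eta)=1$, $E^\Tt_\eta\neq\emptyset$, and $\OR((M^\Tt_\alpha)^\pm)<\lh(E^\Tt_\eta)$. Setting $\beta=\eta$, a direct calculation of the ultrapower by $E^\Tt_\delta$ applied to the non-premouse $M^\Tt_\eta$ yields $\core(F(M^\Tt_\gamma))=F((M^\Tt_\beta)^\pm)$, and the first four items of the lemma follow at once from this identification and the nicely padded clauses.

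Uniqueness of $\beta$ follows because any such $\beta$ must have $\udeg^\Tt(\beta)=1$, $E^\Tt_\beta\neq\emptyset$, and $\beta\leq^\Tt\gamma$ with $(\beta,\gamma]_\Tt$ non-dropping, and along any such branch the only ordinal with $\udeg=1$ of the required type is the successor of a unique transition point. Item \ref{item:ext_assoc} is the MS-analog of \cite[Lemma 2.27***]{extmax}, with the composition established by the same inductive propagation of an explicit decomposition of $F(M^\Tt_\gamma)$. The main obstacle I anticipate is the verification in subcase (ii): one must show that the ultrapower $\Ult_0(M^\Tt_\eta,E^\Tt_\delta)$, with $M^\Tt_\eta$ a non-premouse and $\lh(E^\Tt_\delta)>\OR((M^\Tt_\alpha)^\pm)$, really does produce an active extender factoring as $F((M^\Tt_\beta)^\pm)$ composed with $E^\Tt_\delta$; this is the MS-analog of the two-step factorization at line (\ref{eqn:ext_equiv}) of the proof of Lemma \ref{lem:conversion_exists_J}, and it leans essentially on the transition-point clause $\OR((M^\Tt_\alpha)^\pm)<\lh(E^\Tt_{\alpha+1})$.
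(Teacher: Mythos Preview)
Your sketch is considerably more detailed than the paper's own proof, which consists of a single sentence declaring the lemma ``straightforward'' and citing \cite[Lemma~2.27]{extmax} for part~\ref{item:ext_assoc}. Your inductive structure (limit case via a cofinal tail of $[0,\gamma)_\Tt$, successor case split on the predecessor) and your appeal to \cite{extmax} for the composition are exactly in line with this.

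There is, however, a mislabelling in your subcase~(ii) that you should fix. You phrase the hypothesis as ``$M^{*\Tt}_\gamma$ is still a premouse but the ultrapower creates a non-premouse'', and then conclude that $\eta=\alpha+1$ for a transition point~$\alpha$. But in that conclusion $M^{*\Tt}_\gamma=M^\Tt_\eta=M^\Tt_\alpha$, and the nicely padded rules force $\alpha$ to be $\Tt$-pre-special, so $M^\Tt_\alpha$ is already a \emph{non}-premouse (with $\udeg=1$). Thus your stated hypothesis of~(ii) is inconsistent with its own conclusion. The dichotomy you actually want is: (i)~$\eta$ is $\Tt$-special (so $\gamma\notin\dropset^\Tt$ and $M^{*\Tt}_\gamma=M^\Tt_\eta$ is a non-premouse with $\udeg=0$; apply induction), versus (ii)~$\eta$ is not $\Tt$-special. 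In~(ii) you must first verify $\gamma\notin\dropset^\Tt$---this uses the exchange ordinals $\widetilde{\nu}^\Tt$ to rule out dropping to a type~3 segment $N\pins M^\Tt_\eta$ with $\crit(E^\Tt_\delta)\geq\nu(F^N)$, which is the routine content behind ``straightforward''---and then that $M^\Tt_\eta$ is a non-premouse with $\udeg^\Tt_\eta=1$, whence by the nicely padded clauses $\eta=\alpha+1$ for a transition point~$\alpha$. After this correction, your identification $\beta=\eta$ and the two-step factorization proceed exactly as you describe.
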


\begin{proof}[Proof sketch] The proof is straightforward.
Part \ref{item:ext_assoc} is as in \cite[Lemma 2.27***]{extmax}
(extended to transfinite iterations in a routine manner).
\end{proof}

\begin{dfn}Let $(M,k,\Tt)$ be suitable
and $\gamma+1<\lh(\Tt)$.
If $\gamma$ is $\Tt$-vs,
write $\widetilde{\gamma}$
for the unique transition point $\alpha$ as in Lemma \ref{lem:core_appears}.
If $\gamma$ is $\Tt$-vps, write $\widetilde{\gamma}=\alpha$
where $\alpha+1=\gamma$ (so again, $\alpha$ is a transition point).
Otherwise write $\widetilde{\gamma}=\gamma$. 
\end{dfn}

\begin{dfn}
 Let $(M,k,\Tt)$ be suitable.
 Write  $\beta\leq^{\ext}_{\direct}\alpha$
 iff $\beta=\alpha$ or [$\alpha$ is $\Tt$-vs
 and $\beta+1\in(\widetilde{\alpha}+1,\alpha]_\Tt$].
 Let $\leq^{\ext}$ be the transitive closure
 of $\leq^{\ext}_{\direct}$.
 Let $<^{\ext}_{\direct}$ and $<^{\ext}$
 be the strict parts. Note that 
$\alpha<^{\ext}_{\direct}\beta$ implies $\alpha<\beta$,
so $<^{\ext}$ is wellfounded. Write ${\leq^{\ext,\Tt}}={\leq^{\ext}}$, etc.
\end{dfn}

 Note here that if $\beta_0<^{\ext}\beta_1$ then
 $\beta_0<\beta_1$ but 
$\crit(\core(E^\Tt_{\beta_1}))<\crit(\core(E^\Tt_{\beta_0}))$.

\begin{dfn}
 Let $(M,k,\Tt)$ be suitable and $\alpha+1<\lh(\Tt)$.
 Then the \emph{standard decomposition} of $E^\Tt_\alpha$
 is the enumeration 
 of $\{\core(E^\Tt_\beta)\bigm|\beta\leq^{\ext,\Tt}\alpha\}$,
 in order of increasing critical point.
\end{dfn}

\begin{lem}
 Let $(M,k,\Tt)$ be suitable and $\alpha+1<\lh(\Tt)$.
 
 The standard decomposition of $E^\Tt_\alpha$ 
 is well-defined. That is, if $\beta_0,\beta_1\leq^{\ext,\Tt}\alpha$
 and $\beta_0\neq\beta_1$ then $\kappa_{\beta_0}\neq\kappa_{\beta_1}$
 where $\kappa_{\beta_i}=\crit(\core(E^\Tt_{\beta_i}))=\crit(E^\Tt_{\beta_i})$;
 moreover, if $\beta_0=\alpha$ then $\kappa_{\beta_0}<\kappa_{\beta_1}$,
 and if $\kappa_{\beta_0}<\kappa_{\beta_1}$
 then $\nu(\core(E^\Tt_{\beta_0}))\leq\kappa_{\beta_1}$.

 Further,  $E^\Tt_\alpha$ is equivalent to iteration via the
 extenders in the
  standard decomposition
  of
  $E^\Tt_\alpha$ (in order of increasing critical point).
\end{lem}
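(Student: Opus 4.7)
The plan is a straightforward induction on $\alpha$ with $\alpha+1<\lh(\Tt)$, using Lemma \ref{lem:core_appears} as the key unfolding step; this parallels the J-version of the lemma and, ultimately, \cite[Lemma 2.27***]{extmax}. Fix $\alpha$ and assume the conclusion holds for all $\alpha'<\alpha$.

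If $\alpha$ is non-$\Tt$-vs, then $\beta\leq^{\ext,\Tt}_{\direct}\alpha$ forces $\beta=\alpha$, so by definition of ${\leq^{\ext,\Tt}}$ as the transitive closure of ${\leq^{\ext,\Tt}_{\direct}}$, we get $\{\beta\mid\beta\leq^{\ext,\Tt}\alpha\}=\{\alpha\}$ and $\core(E^\Tt_\alpha)=E^\Tt_\alpha$, so everything is trivial. Suppose then that $\alpha$ is $\Tt$-vs, and let $F=\core(E^\Tt_\alpha)=F((M^\Tt_{\widetilde{\alpha}+1})^\pm)$. Lemma \ref{lem:core_appears} gives $\nu_F\leq\crit(E^\Tt_\xi)$ where $\xi+1=\successor^\Tt(\widetilde{\alpha}+1,\alpha)$, and tells us that $E^\Tt_\alpha=F(M^\Tt_\alpha)$ is equivalent to the composition
\[
\langle F\rangle\conc\langle E^\Tt_\delta\rangle_{\delta+1\in[\xi+1,\alpha]_\Tt},
\]
with $\nu(E^\Tt_{\delta_0})\leq\crit(E^\Tt_{\delta_1})$ for $\delta_0<\delta_1$ in $[\xi+1,\alpha]_\Tt$. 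Each such $\delta$ satisfies $\delta<^{\ext,\Tt}_{\direct}\alpha$, and hence by the inductive hypothesis admits a standard decomposition whose composition is equivalent to $E^\Tt_\delta$. Substituting these decompositions into the display above and using the unfolding definition of ${\leq^{\ext,\Tt}}$, I obtain that
\[
\{\core(E^\Tt_\beta)\mid\beta\leq^{\ext,\Tt}\alpha\}=\{F\}\cup\bigcup_{\delta+1\in[\xi+1,\alpha]_\Tt}\{\core(E^\Tt_\beta)\mid\beta\leq^{\ext,\Tt}\delta\},
\]
and $E^\Tt_\alpha$ is equivalent to the composition of these extenders taken in the appropriate order.

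For the ordering and gap conditions, note $\crit(F)=\crit(E^\Tt_\alpha)$ (since $F$ is the core extender giving rise to $F(M^\Tt_\alpha)$) is minimal: by Lemma \ref{lem:core_appears}, $\nu_F\leq\crit(E^\Tt_\xi)\leq\crit(E^\Tt_\delta)$ for each $\delta+1\in[\xi+1,\alpha]_\Tt$, and by induction on each such $\delta$, the critical points $\crit(\core(E^\Tt_\beta))=\crit(E^\Tt_\beta)$ for $\beta\leq^{\ext,\Tt}\delta$ all lie in $[\crit(E^\Tt_\delta),\nu(E^\Tt_\delta))$. Combined with the strict increase $\nu(E^\Tt_{\delta_0})\leq\crit(E^\Tt_{\delta_1})$ along $[\xi+1,\alpha]_\Tt$, this gives both distinct critical points across the full decomposition and the gap condition $\nu(\core(E^\Tt_{\beta_0}))\leq\kappa_{\beta_1}$ whenever $\kappa_{\beta_0}<\kappa_{\beta_1}$.

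The main technical obstacle I anticipate is the bookkeeping showing that critical points arising from decompositions attached to two distinct $\delta_0,\delta_1\in[\xi+1,\alpha]_\Tt$ remain separated; but this is immediate from the fact that, inductively, the critical points contributed by the $\delta_i$ decomposition lie in $[\crit(E^\Tt_{\delta_i}),\nu(E^\Tt_{\delta_i}))$, together with the $\Tt$-branch gap $\nu(E^\Tt_{\delta_0})\leq\crit(E^\Tt_{\delta_1})$ for $\delta_0<\delta_1$ in the branch. The composition equivalence is then just standard commutativity of ultrapower maps for extenders whose critical points are separated in this way, exactly as in \cite[Lemma 2.27***]{extmax}, so no new calculation is required beyond invoking that.
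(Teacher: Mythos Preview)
Your proposal is correct and follows exactly the approach the paper indicates: induction on $\alpha$ using Lemma~\ref{lem:core_appears} to unfold the $\Tt$-vs case, with the composition equivalence and separation of critical points handled as in \cite[Lemma 2.27***]{extmax}. The only point worth flagging is that the bound ``$\kappa_\beta<\nu(E^\Tt_\delta)$ for all $\beta\leq^{\ext,\Tt}\delta$'' is not literally part of the lemma's statement, so when you invoke it ``by induction on each such $\delta$'' you are tacitly carrying a slightly stronger inductive hypothesis; this is harmless (it follows from the standard fact that $\nu(F(M^\Tt_\delta))=\nu(E^\Tt_\gamma)$ for the last $\gamma$ with $\gamma+1\leq^\Tt\delta$ along the non-dropping branch, iterated), but you should say so explicitly rather than leave it implicit.
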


\begin{proof}
 This is by induction using Lemma \ref{lem:core_appears},
again as in \cite[Lemma 2.27***]{extmax}.
\end{proof}

\begin{dfn}
 Let $(M,k,\Tt)$ be suitable.
 Let $\alpha<\lh(\Tt)$.
Given $\gamma\leq^\Tt\alpha$,
 $\vec{E}^\Tt_{\gamma\alpha}$ denotes the sequence
$\left<E^\Tt_\beta\right>_{\gamma<^\Tt\beta+1\leq^\Tt\alpha}$
(so $\vec{E}^\Tt_{\gamma\alpha}$ corresponds to $i^\Tt_{\gamma\alpha}$
when the latter exists),
and $\vec{F}^\Tt_{\gamma\alpha}$ denotes
the sequence 
$\left<E^\Tt_\beta\right>_{\beta+1\in[\xi+1,\alpha]
_\Tt}$,
where $\xi$ is least such that
$\gamma<^\Tt\xi+1\leq^\Tt\alpha$ and $(\xi+1,\alpha]_\Tt$
does not drop in model.
Write $\vec{E}^\Tt_{<\alpha}=\vec{E}^\Tt_{0\alpha}$
and $\vec{F}^\Tt_{<\alpha}=\vec{F}^\Tt_{0\alpha}$.

Given a sequence $\vec{E}=\left<E_\alpha\right>_{\alpha<\lambda}$
of short extenders, we define
$\left<U_\alpha,k_\alpha\right>_{\alpha\leq\lambda}$,
if possible, by induction on $\lambda$, 
as follows. Set $U_0=M$ and $k_0=k$.
Given $U_\eta$  and $k_\eta\leq\om$
are well-defined and $U_\eta$ is a $k_\eta$-sound
seg-pm
and $\eta<\lambda$,
then: if $\crit(E_\eta)<\OR(U_\eta)$
and there is $N\ins U_\eta$ such that $E_\eta$ measures
exactly $\pow([\kappa]^{<\om})\inter N$,
then letting $N\ins U_\eta$ be the largest such,
and letting $n\leq\om$ be largest such that $(N,n)\ins(U_\eta,k_\eta)$
and $\crit(E_\eta)<\udash\rho_{n}^N$, then $
U_{\eta+1}=\Ult_{\udash n}(N,E_\eta)$ and $k_{\eta+1}=n$.
We say there is a \emph{drop in model} 
at $\eta+1$ iff $N\pins U_\eta$.
Given a limit $\eta$ such that $U_\alpha$ is well-defined
for each $\alpha<\eta$,
then $U_\eta$ is well-defined iff there are only finitely many drops in model
${<\eta}$, and then $U_\eta$ is the resulting direct limit
and $k_\eta=\lim_{\alpha<\eta}k_\alpha$.
We now define $\Ult_{\udash k}(M,\vec{E})=U_\lambda$,
if this is well-defined,
and if so, and there is no drop in model,
we define the \emph{iteration map} $i^{M,\udash k}_{\vec{E}}:M\to U_\lambda$ 
resulting naturally from the ultrapower maps.
Also, if there is no drop in model,
or the only drop in model occurs at $1$,
then define $\bar{i}^{M,\udash k}_{\vec{E}}:N\to U_\lambda$ where $N\ins M$
is as above.

We also make analogous definitions for standard
fine structural ultrapowers (as opposed to $\udash$ultrapowers),
with notation $\Ult_k(M,\vec{E})$ and $i^{M,k}_{\vec{E}}$ and 
$\bar{i}^{M,k}_{\vec{E}}$.
\end{dfn}

\begin{rem}
Let $(M,k,\Tt)$ be suitable and $\beta\leq^\Tt\alpha<\lh(\Tt)$.
Then clearly
 $M^\Tt_\alpha=\Ult_{\udash m}(M^\Tt_\beta,\vec{E}^\Tt_{\beta\alpha})$
 and 
$i^\Tt_{\beta\alpha}=i^{M^\Tt_\beta,m}
_{\vec{E}^\Tt_{\beta\alpha}}$ 
where $m=\udeg^\Tt_\beta$ (with one map defined iff the other is),
and likewise 
$i^{*\Tt}_{\gamma+1,\alpha}=\bar{i}^{M^\Tt_\beta,m}_{\vec{E}^\Tt_{\beta\alpha}}
$ when $\pred^\Tt(\gamma+1)=\beta$.
\end{rem}

\begin{dfn}
 Let $(M,k,\Tt)$ be suitable.
 
 We say that $\Tt$ is \emph{unravelled}
 iff, if $\lh(\Tt)=\alpha+1$ then $\alpha$ is non-$\Tt$-special.
 The \emph{unravelling} $\unrvl(\Tt)$ of $\Tt$
 is the unique unravelled $\udash k$-maximal tree $\Ss$ on $M$,
 if it exists, such that (i) $\Tt\ins\Ss$, (ii) if $\lh(\Tt)$ is a limit then 
$\Tt=\Ss$, and (iii) if $\lh(\Tt)=\alpha+1$ then $\beta$ is $\Ss$-very-special 
for each $\beta+1<\lh(\Ss)$
with $\alpha\leq\beta$. (So
$E^\Ss_\beta=F(M^\Ss_\beta)$ for each $\beta\geq\alpha$,
and $\crit(E^\Ss_{\alpha+i+1})<\crit(E^\Ss_{\alpha+i})$,
so $\lh(\Ss)<\lh(\Tt)+\om$; the existence of $\Ss$ just depends on the 
wellfoundedness of the resulting models.)

 We say that $\Tt$ is \emph{everywhere unravelable}
 iff (i) $\unrvl(\Tt\rest\beta)$ exists for every $\beta\leq\lh(\Tt)$,
  and (ii)
 for each transition point $\alpha$,
letting $\Ww=(\Tt\rest(\alpha+2))\conc 
F(M^\Tt_\alpha)$ (a putative nicely padded $\udash k$-maximal tree),
$\Ww$ has wellfounded models and $\unrvl(\Ww)$ exists.
\end{dfn}

\begin{dfn}
 Let $(M,k)$ be suitable. Let $m=k$ if
  $M$ is type $\leq 2$, and $m=k-1$ otherwise.
  Write $m^M(k)=m$.
  We say that $(M,k,m)$ is \emph{suitable},
  and  say $(M,k,\Tt,m)$ is \emph{suitable}
  iff $(M,k,\Tt)$ and $(M,k,m)$ are suitable.

  Let $\Uu$ be an $m$-maximal
 (not $\udash m$-maximal!)
 tree on $M$. We say that $\Uu$ is \emph{$(M,\uu)$-wellfounded}
 iff for every $\alpha<\lh(\Uu)$,
 $U=\Ult_{\udash k}(M,\vec{E}_{<\alpha})$ is wellfounded.
\end{dfn}

With notation as above,
 it is straightforward to see that if $M^\Tt_\alpha$ is 
 type $\leq 2$
 then $U=M^\Tt_\alpha$,
 and if $M^\Tt_\alpha$ is type 3 then 
$U^\passive\ins\Ult(M^\Tt_\alpha|(\kappa^+)^{M^\Tt_\alpha},F(M^\Tt_\alpha))$
where $\kappa=\crit(F(M^\Tt_\alpha))$.
Therefore if $\Uu$ is via a reasonable $m$-maximal strategy
for $M$, then $\Uu$ is $(M,\uu)$-wellfounded.

\begin{dfn}\label{dfn:conversion}
 Let $(M,k,\Tt,m)$ be suitable with $\Tt$ unravelled and everywhere unravelable.
 We define
 a padded  $m$-maximal
 tree $\Uu=\conv(\Tt)$ on $M$,
 the \emph{$m$-maximal conversion} of $\Uu$,
 with exchange ordinals
 $\widetilde{\nu}^\Uu_\alpha$ for 
$\alpha+1<\lh(\Uu)$, by requiring
 (we
 verify later that this works):
 \begin{enumerate}
  \item $\lh(\Uu)=\lh(\Tt)$.  
\item\label{item:tree_length_exchange_ord_agmt} $\widetilde{\nu}
^\Uu_\alpha=\widetilde{\nu}^\Tt_\alpha$ for 
$\alpha+1<\lh(\Uu)$.
  \item if $\alpha$ is a transition point of $\Tt$
  then $E^\Uu_\alpha=F(M^\Uu_\alpha)$ (which
  is the trivial completion of $F(M^\Tt_\alpha)\rest\nu(F(M^\Tt_\alpha))$).
  \item If $\alpha$ is $\Tt$-vs or $\Tt$-vps then $E^\Uu_\alpha=\emptyset$
  and $\pred^\Uu(\alpha+1)=\alpha$ and 
$M^\Uu_{\alpha+1}=M^\Uu_\alpha$
and $\deg^\Uu_{\alpha+1}=\deg^\Uu_\alpha$.
  \item If $\alpha$ is a non-transition point, non-$\Tt$-vs
  and non-$\Tt$-vps then $E^\Uu_\alpha=E^\Tt_\alpha$.
  \item\label{item:conversion_limits} Let $\eta<\lh(\Tt)$ be a limit.
  Fix $\gamma<^\Tt\eta$
  such that $(\gamma,\eta)_\Tt$ does not drop
  and does not contain the successor of a transition point.
  Let $X$ be the set of all $\beta<\eta$ such that 
$\beta\leq^{\ext,\Tt}\alpha$ for some
$\alpha+1\in(\gamma,\eta)_\Tt$ with $E^\Tt_\alpha\neq\emptyset$.
Then:
\begin{enumerate}
\item $\{\widetilde{\beta}+1\bigm|\beta\in X\}$ is cofinal in 
$\eta$;
 \item 
 $E^\Uu_{\widetilde{\beta}}=\core(E^\Tt_{\beta})$
 for each $\beta\in X$;
\item for all $\beta_0,\beta_1\in X$,
either $\widetilde{\beta_0}+1\leq^\Uu\widetilde{\beta_1}+1$ or
vice versa;
\item  $[0,\eta)_\Uu$ is the
$\leq^\Uu$-downward closure
of $\{\widetilde{\beta}+1\bigm|\beta\in X\}$.\qedhere
\end{enumerate}  
 \end{enumerate}
\end{dfn}

\begin{lem}\label{lem:conversion_exists}
  Let $(M,k,\Tt,m)$ be suitable with $\Tt$ unravelled and everywhere 
unravelable, and $\Tt$ non-trivial. Then:
\begin{enumerate}
 \item $\Uu=\conv(\Tt)$ is a well-defined padded $m$-maximal
 tree on $M$.
\item Suppose $\lh(\Tt)=\alpha+1$
(so either $M^\Tt_\alpha$ is type $\leq 2$ 
or $\udeg^\Tt_\alpha\geq 1$).
Let  $\varepsilon^\Tt+1\leq^\Tt\alpha$ be least such that
$(\varepsilon^\Tt+1,\alpha]_\Tt\inter\dropset^\Tt=\emptyset$,
and $\varepsilon^\Uu$ likewise for $\Uu$. Then (and let $\delta,N^*$ be defined 
by):
\begin{enumerate}
 \item $M^\Uu_\alpha=(M^\Tt_\alpha)^\pm$,
\item $[0,\alpha]_\Tt\inter\dropset^\Tt=\emptyset\iff [0,\alpha]
_\Uu\inter\dropset^\Uu=\emptyset$.
\item $\delta=\pred^\Tt(\varepsilon^\Tt+1)=\pred^\Uu(\varepsilon^\Uu+1)$,
\item  
$(N^*)^\pm=M^{*\Uu}_{\varepsilon^\Uu+1}$
where $N^*=M^{*\Tt}_{\varepsilon^\Tt+1}$,
and note if $[0,\alpha]_\Tt\inter\dropset^\Tt\neq\emptyset$
then $N^*=(N^*)^\pm$,
\item $\vec{F}^\Uu_{<\alpha}=
\{\core(E^\Tt_\beta)\bigm|\exists\gamma\ 
\big[\varepsilon^\Tt+1\leq^\Tt\gamma+1\leq^\Tt\alpha\text{ and } 
\beta\leq^{\exit,\Tt}\gamma\big]\}$,
so $\vec{F}^\Uu_{<\alpha}$ is equivalent to 
$\vec{F}^\Tt_{<\alpha}$,
\item letting $n=\udeg^\Tt(\alpha)$ and 
$n'=\deg^\Uu(\alpha)$, we have
\begin{enumerate}[label=--]
\item $M^\Tt_\alpha=\Ult_{\udash 
n}(N^*,\vec{F}^\Tt_{<\alpha})$,
\item $M^\Uu_\alpha=\Ult_{n'}((N^*)^\pm,\vec{F}^\Uu_{<\alpha})$,
\item $i^{*\Uu}_{\varepsilon^\Uu+1,\alpha}=i^{*\Tt}_{\varepsilon^\Tt+1,\alpha}
\rest\core_0((N^*)^\pm)$.
\end{enumerate}

\end{enumerate}

\item $\Uu$ is $(M,\uu)$-wellfounded, and moreover:
\begin{enumerate}
\item For  $\beta\in[1,\lh(\Tt)]$ not of form $\beta=\xi+2$\footnote{
Note that $\Tt\rest\xi+2$ has last model $M^\Tt_{\xi+1}$,
and so if $\xi$ is a transition point then the last ``extender''
of $\Tt\rest(\xi+2)$ is $E^\Tt_\xi=\emptyset$.}
for a transition point $\xi$,
\[\conv(\unrvl(\Tt\rest\beta))=(\Uu\rest\beta)\conc(\emptyset,\ldots,\emptyset)
\]
(where $(\Uu\rest\beta)\conc(\emptyset,\ldots,\emptyset)$
is an extension of $\Uu\rest\beta$ by just padding;
the extension is finitely long), and
\item For each transition point $\xi$,\footnote{So $E^\Tt_\xi=\emptyset$ and 
$E^\Uu_\xi=F(M^\Uu_\xi)$
and $F(M^\Tt_{\xi})=F(M^\Tt_{\xi+1})$ is equivalent to $F(M^\Uu_\xi)$.}
\[ \conv\Big(\unrvl\Big((\Tt\rest(\xi+2))\conc F(M^\Tt_{\xi+1})\Big)\Big)=
 (\Uu\rest(\xi+2))\conc(\emptyset,\ldots,\emptyset).
\]
(The models witnessing $(M,\uu)$-wellfoundedness
appear as the last models of the unravelled trees mentioned in the two 
clauses above, so they are wellfounded.)
\end{enumerate}
\item\label{item:comparability} Let $\alpha+1<\lh(\Tt)$ with 
$E^\Tt_\alpha\neq\emptyset$ and 
$X=\{\beta\bigm|
\beta\leq^{\ext,\Tt}\alpha\}$.
Then for each $\beta\in X$,
we have $E^\Uu_{\widetilde{\beta}}=\core(E^\Tt_\beta)$,
and for all $\beta_0,\beta_1\in X$, if 
$\crit(E^\Tt_{\beta_0})\leq\crit(E^\Tt_{\beta_1})$
then
$\widetilde{\beta_0}+1\leq^\Uu\widetilde{\beta_1}+1\leq^\Uu\alpha+1$
and
$(\widetilde{\beta_0}+1,\alpha+1]
_\Uu\inter\dropset^\Uu_{\deg}=\emptyset$.
\item\label{item:iterated_comparability} Let $\alpha+1<^\Tt\alpha'+1<\lh(\Tt)$
be such that $E^\Tt_\alpha\neq\emptyset\neq E^\Tt_{\alpha'}$
and $\alpha'+1$ is non-$\Tt$-special
and $(\alpha+1,\alpha'+1]_\Tt$ does not drop in model.
Let  $\beta\leq^{\ext,\Tt}\alpha$
and $\beta'\leq^{\ext,\Tt}\alpha'$.
Then:
\begin{enumerate}[label=--]
\item $\widetilde{\alpha}+1\leq^\Uu\widetilde{\beta}
+1\leq^\Uu\alpha+1\leq^\Uu
\widetilde{\alpha'}+1\leq^\Uu\widetilde{\beta'}+1\leq^\Uu\alpha'+1$,
\item $(\widetilde{\alpha}+1,\alpha'+1]_\Uu\inter\dropset^\Uu=\emptyset$, and
\item if
$(\alpha+1,\alpha'+1]_\Tt\inter\dropset_{\deg}^\Tt=\emptyset$
then 
$(\widetilde{\alpha}+1,\alpha'+1]_\Uu\inter\dropset_{\deg}^\Uu=\emptyset$.
\end{enumerate}
\item\label{item:real_exts_of_Uu} If $\Uu$ has successor length,
then for every $\gamma+1\in b^\Uu$ with 
$E^\Uu_\gamma\neq\emptyset$,
there is $\beta+1<\lh(\Tt)$ with $E^\Tt_\beta\neq\emptyset$
and $\gamma=\widetilde{\beta}$, so $E^\Uu_\gamma=\core(E^\Tt_\beta)$.
\end{enumerate}
\end{lem}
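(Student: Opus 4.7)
The plan is to prove all six assertions simultaneously by induction on the trees appearing in part~(3), closely mirroring the proof of Lemma~\ref{lem:conversion_exists_J}. The case $\lh(\Tt)=1$ is trivial, and the case of $\lh(\Tt)$ a limit follows immediately from the inductive hypothesis together with Definition~\ref{dfn:conversion}(\ref{item:conversion_limits}). So the real work is at successor stages $\lh(\Tt)=\varepsilon+1$.

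At a successor stage I would decompose the tail of $\Tt$: after the last stage that is non-$\Tt$-vs, say $\eta$, there is a (possibly empty) run of $\Tt$-vs stages $\eta+1,\ldots,\eta+n$. The induction splits into cases according to the nature of $\eta$ and $\eta+1$. In the simplest subcase, where $\eta+1$ is non-$\Tt$-special and non-$\Tt$-vps, I would take $E^\Uu_\eta=E^\Tt_\eta$ (or, if $\eta$ is the immediate successor of a transition point, have $E^\Uu_\eta$ live on the $(\cdot)^\pm$-sequence of the previous model as dictated by nice padding), verify directly that $M^\Uu_{\eta+1}=(M^\Tt_{\eta+1})^\pm$ with matching iteration maps, and then pad $\Uu$ through the trailing $\Tt$-vs block per Definition~\ref{dfn:conversion}.

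The substantive subcase, paralleling Subcases~\ref{sscase:chi_non_T-special_is_trans_succ_J} and~\ref{sscase:chi_T-special_J} of the J-version, is when $\eta+1$ is $\Tt$-special or $\Tt$-vps. Here the key identity is Lemma~\ref{lem:core_appears}(\ref{item:ext_assoc}): $F(M^\Tt_{\eta+1})$ is equivalent to the composition of $F=\core(F(M^\Tt_{\eta+1}))$ with the trailing extenders along the relevant $\Tt$-branch leading to $\eta+1$. By induction applied to the shorter tree $\unrvl(\Tt\rest(\eta+1))$ and to the tree ending with $F$ on the $\Uu$-side, those trailing extenders already form a linear segment of $\Uu$ terminating at $\eta+1$, with $F$ appearing earlier along the same segment (by parts~(\ref{item:comparability}) and~(\ref{item:iterated_comparability}) applied inductively). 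Hence $M^\Uu_{\eta+1}$ can be presented as a two-step $\udash$-ultrapower realizing the above composition, matching $M^\Tt_{\eta+1}$ up to the $(\cdot)^\pm$-operation; standard extender-composition identities then yield matching ultrapower maps, and the trailing $\Tt$-vs stages propagate this identification through the unravelling exactly as in the J-case. Parts~(\ref{item:comparability}),~(\ref{item:iterated_comparability}), and~(\ref{item:real_exts_of_Uu}) are then read off directly from the resulting $\Uu$-tree order, with part~(\ref{item:real_exts_of_Uu}) using that every non-empty $E^\Uu_\gamma$ arises as $\core(E^\Tt_\beta)$ for some $\beta\leq^{\ext,\Tt}\alpha$ with $\gamma=\widetilde{\beta}$.

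The main obstacle I anticipate is the bookkeeping around type~3 MS-premice and the interplay of $\udash$-fine structure with the $(\cdot)^\pm$-operation: one must carefully track that $\udeg^\Tt$ agrees with $\deg^\Uu$ away from transition points but is offset by one across them (since $\deg^\Uu_{\alpha+1}=1$ at the stage just after a transition, forcing a one-step fine-structural adjustment when comparing $M^\Tt_{\eta+1}$ with $M^\Uu_{\eta+1}=(M^\Tt_{\eta+1})^\pm$). This is precisely the complication that the transition-point mechanism exists to accommodate, and is absent in the J-case; apart from it, the argument should be a direct adaptation of Lemma~\ref{lem:conversion_exists_J}.
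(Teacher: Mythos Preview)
Your plan is correct and takes essentially the same approach as the paper: induction on the unravelled subtrees appearing in part~(3), with the successor step organized around the last non-$\Tt$-vs index $\xi$ and the extender-composition identity of Lemma~\ref{lem:core_appears}. Two minor slips worth noting: the degree forced to $1$ after a transition point is $\udeg^\Tt(\alpha+1)$ (on the $\Tt$-side, per the nice-padding rules), not $\deg^\Uu(\alpha+1)$; and the paper's subcase analysis is organized around the nature of $\chi=\pred^\Tt(\xi+1)$ (non-special, successor of a transition point, or $\Tt$-special) rather than of $\xi+1$ itself, but this is cosmetic and the content is the same.
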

\begin{proof}
The proof is by induction on $\lh(\Tt)$.
For $\lh(\Tt)=1$ it is trivial and for $\lh(\Tt)$ a limit,
it follows immediately by induction. So suppose $\lh(\Tt)=\varepsilon+1$
for some $\varepsilon>0$.

\begin{casethree} $\lh(\Tt)=\xi+n+2$ where  $n<\om$ and
$\xi$ is non-$\Tt$-vs but $\xi+1+i$ is $\Tt$-vs for all $i<n$.

Note then that $E^\Tt_\xi\neq\emptyset$ (according to the rules
of nicely padded trees).

\begin{scasethree}
 It is not the case that $\xi=\varepsilon+1$ for a transition point 
$\varepsilon$.

Let $\mu\leq\xi$ be least such that $\alpha$ is $\Tt$-vs
for each 
$\alpha\in[\mu,\xi)$. Let
\[ \bar{\Tt}=\unrvl(\Tt\rest(\mu+1)),\]
and say $\lh(\bar{\Tt})=\mu+\ell+1$ (so $\ell<\om$).
Let $\bar{\Uu}=\conv(\bar{\Tt})$.
So $\xi\in[\mu,\mu+\ell]$ 
and by induction, we have
\[ 
(M^{\bar{\Tt}}_{\mu+\ell})^\pm=M^{\bar{\Uu}}_{\mu+\ell}=M^{\bar{\Uu}}
_\mu=M^\Uu_\mu=M^\Uu_\xi.\]
Since $\xi$ is non-$\Tt$-vs
and by  subcase hypothesis,
therefore $E^\Tt_\xi\in\es_+(M^\Uu_\xi)$,
and note that $\lh(E^\Uu_\alpha)\leq\lh(E^\Tt_\xi)$
for each $\alpha<\xi$ with $E^\Uu_\alpha\neq\emptyset$. So we can set 
$E^\Uu_\xi=E^\Tt_\xi=E$.
Let $\kappa=\crit(E)$.
Let $\chi=\pred^\Tt(\xi+1)=\pred^\Uu(\xi+1)$ (recall 
$\widetilde{\nu}^{\bar{\Tt}}_\beta=\widetilde{\nu}^{\bar{\Uu}}_\beta$ for all 
$\beta+1<\lh(\bar{\Tt})$,
by \ref{dfn:conversion}).

\begin{sscasethree} $\chi$ is non-$\Tt$-special and not the successor of a 
transition point.
 
 So $(M^\Tt_\chi)^\pm=M^\Uu_\chi$
 and  
$\kappa<\widetilde{\nu}^\Tt_\chi=\widetilde{\nu}
^\Uu_\chi=\nu(E^\Uu_\chi)$, and either $\chi$ is a transition point
and $E^\Tt_\chi=\emptyset$ and $E^\Uu_\chi=F(M^\Uu_\chi)$
and $M^\Uu_\chi$ is active type 3,
or $\chi$ is a non-transition point and $E^\Tt_\chi=E^\Uu_\chi$.  If 
$M^\Tt_\chi\neq M^\Uu_\chi$
then $(M^\Uu_\chi)^\passive=M^\Tt_\chi||\OR(M^\Uu_\chi)$
and $\OR(M^\Uu_\chi)$ is a cardinal of $M^\Tt_\chi$,
and therefore $\xi+1\in\dropset^\Tt$ iff $\xi+1\in\dropset^\Uu$,
and $\xi+1\in\dropset^\Tt_{\udeg}$ iff $\xi+1\in\dropset_{\deg}^\Uu$,
and when there is a drop, the drops are to the same model and
corresponding $\udash$degree  and degree respectively.
Note 
that $\xi+1$ is non-$\Tt$-special, so $E$ is the last extender
used in $\Tt,\Uu$, and $\lh(\Tt)=\xi+2$.

We claim that $(M^\Tt_{\xi+1})^\pm=M^\Uu_{\xi+1}$, and 
there is appropriate
agreement of iteration maps. This is immediate
when $M^{*\Uu}_{\xi+1}$ is non-type 3,
so suppose it is type 3.
Suppose first there is no drop in model at $\xi+1$.
So possibly
 $M^\Tt_\chi\neq M^\Uu_\chi$, and in any case,
 letting $d=\udeg^\Tt_{\xi+1}$ and $e=\deg^\Uu_{\xi+1}$
 (so either $d=e+1<\om$ or $d=e=\om$),
then $M^\Tt_{\xi+1}=\Ult_{\udash d}(M^\Tt_\chi,E)$
(formed without squashing), whereas
$M^\Uu_{\xi+1}=\Ult_{e}(M^\Uu_\chi,E)$ (formed with squashing).
By \cite[Definition 2.5]{iter_for_stacks} and
as in \cite[Lemma 9.1]{fsit}, we get
$(M^\Tt_{\xi+1})^\pm=M^\Uu_{\xi+1}$
and the ultrapower maps agree over $(M^\Uu_\chi)^\sq$.
When there is a drop in model, it is likewise,
but slightly simpler, because then
we have $M^{*\Tt}_{\xi+1}=M^{*\Uu}_{\xi+1}$.

The remaining properties in this subsubcase are now straightforward
to verify by induction.
\end{sscasethree}

\begin{sscasethree}\label{sscase:chi_non_T-special_is_trans_succ} 
$\chi=\alpha+1$
for a
transition point $\alpha$ (so $\chi$ is non-$\Tt$-special).

By subcase hypothesis, $\chi<\xi$.
So $M^\Uu_\alpha$ is type 3, $\udeg^\Tt(\alpha)\geq 1$ and
 $(M^\Tt_\alpha)^\pm=M^\Uu_\alpha$
 and 
$E^\Uu_\alpha\rest\nu=
F(M^\Tt_\alpha)\rest\nu$ where 
$\nu=\widetilde{\nu}^{\Tt,\Uu}_\alpha=\nu(E^\Uu_\alpha)=\nu(F(M^\Tt_\alpha))$.
 With $\theta=\crit(F(M^\Tt_\alpha))=\crit(E^\Uu_\alpha)$, note 
\[\begin{array}{rcl}
(M^\Tt_\alpha)^\passive&=&
\Ult(M^\Tt_\alpha|(\theta^+)^{M^\Tt_\alpha},
F(M^\Tt_\alpha))|\OR^{M^\Tt_\alpha}\\
&=&\Ult(M^{*\Uu}_{\alpha+1}|(\theta^+)^{M^{*\Uu}_{\alpha+1}},
E^\Uu_\alpha)|\OR^{M^\Tt_\alpha}\\
&=&M^\Uu_{\alpha+1}||(\delta^+)^{M^\Uu_{\alpha+1}},\end{array}\]
where $\delta=\lgcd(M^\Tt_\alpha)$. Moreover,
$\delta\neq\lgcd(M^\Uu_{\alpha+1})$,
because otherwise, $\theta=\lgcd(M^{*\Uu}_{\alpha+1})$
and  $M^{*\Uu}_{\alpha+1}$ is active type 2
and $F(M^{*\Uu}_{\alpha+1})=E^\Uu_\beta$ for some $\beta<\alpha$,
but $M^\Uu_\alpha$ is active with $\crit(F(M^\Uu_\alpha))=\theta$,
and it is easy to see this gives a contradiction. So
$(M^\Tt_{\alpha+1})^\passive\pins M^\Uu_{\alpha+1}$.
Since $\alpha+1<\xi$ and by induction,
letting
\[ \Tt'=\unrvl((\Tt\rest(\alpha+2))\conc F(M^\Tt_{\alpha+1})) \]
(note $M^\Tt_{\alpha+1}=M^\Tt_\alpha$),
then $\Tt'$ exists (with wellfounded models)
and 
letting
\[ \Uu'=\conv(\Tt')=\Uu\rest(\alpha+2)\conc(\emptyset,\ldots,\emptyset),\]
then $(M^{\Tt'}_\infty)^\pm=M^{\Uu'}_\infty=M^\Uu_{\alpha+1}$
and letting $e=\deg^\Uu_{\alpha+1}=\deg^{\Uu'}_\infty$
and  $d=\udeg^{\Tt'}(\infty)$, then $e=m^{M^{\Tt'}_\infty}(d)$ (so 
$e=d$
or $e=d-1$ as appropriate).

Now 
$\nu\leq\kappa<\widetilde{\nu}^\Tt_{\alpha+1}=\widetilde{\nu}^\Uu_{\alpha+1}$.
Since $\OR(M^\Tt_{\alpha+1})=\OR(M^\Tt_\alpha)$ is a cardinal of 
$M^\Uu_{\alpha+1}$,
clearly $\xi+1\in\dropset^\Tt$ iff $\xi+1\in\dropset^\Uu$,
and if $\xi+1\in\dropset^\Tt$ then $M^{*\Tt}_{\xi+1}=M^{*\Uu}_{\xi+1}$.
So in the dropping case, it is easy to maintain the hypotheses
(and $\xi+1$ is non-$\Tt$-special).

Suppose $\xi+1\notin\dropset^\Tt$.
Then $M^{*\Tt}_{\xi+1}=M^\Tt_{\alpha+1}=M^\Tt_\alpha$
and since $\nu\leq\kappa$, we get $\udeg^\Tt(\alpha+1)=0$
and
\[ M^\Tt_{\xi+1}=\Ult_{\udash 0}(M^\Tt_{\alpha+1},E)=\Ult(M^\Tt_{\alpha+1},E), 
\]
so $\xi+1$ is $\Tt$-special.
Noting $\delta<\rho_{e}(M^\Uu_{\alpha+1})$,
$\Uu$ does not drop in model or degree at $\xi+1$,
and $M^\Uu_{\xi+1}=\Ult_e(M^\Uu_{\alpha+1},E)$.

Now $F(M^\Tt_{\xi+1})$ is equivalent to the two-step
iteration
$(F(M^\Tt_{\alpha}), E)$. With $\Tt'$ from above, let 
$\lh(\Tt')=\alpha+\ell+3$
(so $\ell<\om$; note $\lh(\Tt')\geq\alpha+3$
as $\Tt'$ pads at $\alpha$ and 
$E^{\Tt'}_{\alpha+1}=F(M^\Tt_{\alpha+1})=F(M^\Tt_\alpha)$).

Let $\Tt''=\unrvl(\Tt\rest(\xi+2))$
and
\[ \Uu''=\conv(\Tt'')=\Uu\rest(\xi+2)\conc(\emptyset,\ldots,\emptyset).\]
Then an easy induction gives that for each $i\leq\ell$,
\[ M^{\Tt''}_{\xi+1+i}=\Ult_{\udash 0}(M^{\Tt'}_{\alpha+1+i},E) \]
and $F(M^{\Tt''}_{\xi+1+i})$ is equivalent to the two-step
iteration $(F(M^{\Tt'}_{\alpha+1+i}),E)$,
and $\lh(\Tt'')=\xi+\ell+3$,
and recalling $d=\udeg^{\Tt'}(\alpha+2+\ell)$,
note $d=\udeg^{\Tt''}(\xi+2+\ell)$, and
(letting) $N^*=M^{*\Tt'}_{\alpha+2+\ell}=M^{*\Tt''}_{\xi+2+\ell}$,
we have
\[\begin{array}{rclcl} M^{\Tt''}_{\xi+2+\ell}&=&\Ult_{\udash d}(N^*,
F(M^{\Tt''}_{\xi+1+\ell}))\\
&=&\Ult_{\udash d}(\Ult_{\udash d}(N^*,
F(M^{\Tt'}_{\alpha+1+\ell})),E)\\
&=&\Ult_{\udash d}(M^{\Tt'}_{\alpha+\ell+2},E),\end{array}\]
and since $(M^{\Tt'}_{\alpha+\ell+2})^\pm=M^\Uu_{\alpha+1}$
and $d,e=m^{M^{\Tt'}_{\alpha+\ell+2}}(d)$
correspond appropriately
and the  ultrapower maps of $\Tt',\Uu'$ agree appropriately,
and $M^\Uu_{\xi+1}=\Ult_e(M^\Uu_{\alpha+1},E)$, we get
\[ (M^{\Tt''}_{\xi+2+\ell})^\pm=(\Ult_{\udash 
d}(M^{\Tt'}_{\alpha+\ell+2},E))^\pm=M^\Uu_{\xi+1}=M^{\Uu''}_{\xi+1}
=M^{\Uu''}_{\xi+2+\ell}, \]
$e=m^{M^{\Tt''}_{\xi+2+\ell}}(d)$
and the ultrapower maps of $\Tt'',\Uu''$ agree 
 appropriately also.

Regarding part \ref{item:comparability} for $\Tt''$ and
for $X=\{\beta\bigm|\beta\leq^{\ext,\Tt}\xi+1\}$,
we have $X=\{\xi,\xi+1\}$,
and $\widetilde{\xi}=\xi$ and $\widetilde{\xi+1}=\alpha$,
and $\alpha+1\leq^{\Uu''}\xi+1\leq^{\Uu''}\xi+2$, and $\Uu$ does not drop in 
model or degree at 
$\xi+1$ or $\xi+2$ (note that $\Uu''$ pads at $\xi+1$,
so $\pred^{\Uu''}(\xi+2)=\xi+1$ etc). Parts \ref{item:iterated_comparability}
and \ref{item:real_exts_of_Uu} now follow from the above
considerations and by induction applied to $\Tt'$.

\end{sscasethree}
\begin{sscasethree}\label{sscase:chi_T-special} $\chi$ is $\Tt$-special.
 
So $\chi$ is not the successor of a transition point.
It is straightforward to see
\[\nu(F(M^\Tt_\chi))=\sup_{\alpha<\chi}\nu(E^\Tt_\alpha)\leq\sup_{\alpha<\chi}
\widetilde{\nu}^\Tt_\alpha\leq\kappa.\]
(Note that we can have, for example, $\chi=\alpha+1$
and $\nu(E^\Tt_\alpha)<\iota(E^\Tt_\alpha)=\widetilde{\nu}^\Tt_\alpha$,
and in that case, $\nu(F(M^\Tt_\chi))=\nu(E^\Tt_\alpha)$.)
Note that $F(\Ult_{\udash 0}(M^\Tt_\chi,E))$ is equivalent
to the two-step iteration $(F(M^\Tt_\chi),E)$. So
things are almost the same as in Subsubcase 
\ref{sscase:chi_non_T-special_is_trans_succ},
so we leave the details to the reader.

\end{sscasethree}

\end{scasethree}

\begin{scasethree}
 $\xi=\varepsilon+1$ for a transition point $\varepsilon$.
 
 So with $\bar{\Tt}$ as before, and $\mu+\ell+1=\lh(\bar{\Tt})=\lh(\bar{\Uu})$,
we have $\bar{\Tt}\ins\Tt$ and $\bar{\Uu}\ins\Uu$
and $\mu+\ell=\varepsilon$ and $E^\Tt_\varepsilon=\emptyset$
and $E^\Uu_\varepsilon=F(M^\Uu_{\varepsilon})$
and $\OR(M^\Uu_{\varepsilon})<\lh(E^\Tt_{\varepsilon+1})$.
By observations in Subsubcase \ref{sscase:chi_non_T-special_is_trans_succ}, 
either 
\begin{enumerate}[label=(\roman*)]
 \item\label{item:common_ext} $E^\Tt_{\varepsilon+1}
\in\es(M^\Tt_{\varepsilon+1})\inter
\es(M^\Uu_{\varepsilon+1})$, and we set
$E^\Uu_{\varepsilon+1}=E^\Tt_{\varepsilon+1}$,
or
\item\label{item:delayed_ext} $E^\Tt_{\varepsilon+1}=F(M^\Tt_{\varepsilon+1})$,
and we set $E^\Uu_{\varepsilon+1}=\emptyset$.
\end{enumerate}
In case \ref{item:common_ext}
we now proceed as before  with $\xi=\varepsilon+1$.
(Letting $\chi=\pred^\Tt(\varepsilon+2)=\pred^\Uu(\varepsilon+2)$,
if $\chi=\varepsilon+1$,
 it is like Subsubcase \ref{sscase:chi_non_T-special_is_trans_succ};
 and if $\chi<\varepsilon+1$ and we define $\Tt'',\Uu''$
 much as before, then
 $\varepsilon$ is not of form $\widetilde{\beta}$
 (computed with respect to $\Tt''$) for any $\beta+1<\lh(\Tt'')$
 with $E^{\Tt''}_\beta\neq\emptyset$,
 but note that $\varepsilon+1\notin b^{\Uu''}$ in this case.)
In case
 \ref{item:delayed_ext} it is similar, but the role of the pair 
$(E^\Tt_\xi,E^\Uu_\xi)=(E,E)$
 in the previous cases is replaced by the pair 
$(E^\Tt_{\varepsilon+1},E^\Uu_{\varepsilon})$,
which works since these two extenders are equivalent to one another,
and here we have $\widetilde{\varepsilon+1}=\varepsilon$.
\end{scasethree}

\end{casethree}

\begin{casethree}
 $\lh(\Tt)=\lambda+n+1$ where $\lambda$ is a limit
 and $E^\Tt_{\lambda+i}$ is $\Tt$-vs for all $i<n$.
 
 Let $b=[0,\lambda)_\Tt$. Note that $\lambda$ is $\Tt$-special
iff $\alpha$ is $\Tt$-special
for all sufficiently large $\alpha\in b$.
By parts \ref{item:comparability}
and
\ref{item:iterated_comparability}
 for trees of length ${<\lambda}$, $b$ induces
 a $\Uu\rest\lambda$-cofinal branch,
 which has the properties required by
 Definition \ref{dfn:conversion}(\ref{item:conversion_limits}).
 (If $\lambda$ is $\Tt$-special
 then apply part \ref{item:comparability}
 to  $\unrvl(\Tt\rest(\alpha'+1))$ for sufficiently large 
$\alpha'+1<^\Tt\lambda$.)

So if $\lambda$ is non-$\Tt$-special,
then induction easily shows that
$(M^\Tt_\lambda)^\pm=M^\Uu_\lambda$ and iteration maps agree appropriately
etc.
If $\lambda$ is $\Tt$-special, then proceed essentially
as in Subsubcase \ref{sscase:chi_T-special},
but using  $\vec{F}^\Tt_{\alpha\lambda}$
and the equivalent $\vec{F}^\Uu_{\alpha\lambda}$,
where $\alpha\in b$ is sufficiently large,
in place of single extenders of $\Tt,\Uu$.
\end{casethree}

This completes the proof of the lemma.
\end{proof}

\begin{lem}\label{lem:conversion_covers}
 Let $(M,k,m)$ be suitable and $\Uu'$ be an $(M,\uu)$-wellfounded
 $m$-maximal tree on $M$. Then there is a unique pair
 $(\Tt,\Uu)$ such that $\Tt$ is an
 unravelled everywhere unravelable tree $\Tt$
 with $(M,k,\Tt,m)$ suitable, $\Uu=\conv(\Tt)$,
 and $\Uu'$ is given by removing all padding from $\Uu$.
\end{lem}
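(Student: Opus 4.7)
The plan is to proceed by induction on $\lh(\Uu')$, mirroring the proof of Lemma \ref{lem:conversions_cover_J} and maintaining at each stage a unique pair $(\Tt,\Uu)$ with $(M,k,\Tt,m)$ suitable, $\Tt$ unravelled and everywhere unravelable, $\Uu=\conv(\Tt)$, and $\Uu'\rest\lh(\Uu')$ obtained from $\Uu$ by deleting all $\emptyset$-extenders. The trivial base case is $\lh(\Uu')=1$. The uniqueness at every stage is automatic once existence is established: $\Uu'$ determines the $\emptyset$-padding of $\Uu$ via the rules for nicely padded trees (Definition \ref{dfn:conversion}), and $\Uu$ in turn determines $\Tt$ via the case analysis of Lemma \ref{lem:conversion_exists} (for each $\alpha$ one reads off whether $\alpha$ is a transition point, $\Tt$-vs, $\Tt$-vps, or non-special, by the extender $E^\Uu_\alpha$ and the model $M^\Uu_\alpha$).

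The successor stage is essentially a direct inversion of Lemma \ref{lem:conversion_exists}. Given $\Uu'\rest(\gamma+1)$ and the already-constructed $(\Tt\rest\beta,\Uu\rest\beta)$ corresponding to $\Uu'\rest\gamma$ (where $\beta$ is the $\Uu$-length), and given the next extender $E=E^{\Uu'}_\gamma\in\es_+(M^{\Uu'}_\gamma)=\es_+(M^\Uu_\beta)$, part \ref{item:real_exts_of_Uu} of Lemma \ref{lem:conversion_exists} applied to any candidate extension forces $E=\core(E^\Tt_\alpha)$ for the newly added extender. Accordingly we extend $\Tt$ in one of the obvious ways dictated by $E$: either $E^\Tt_\beta=E$ if $M^\Tt_\beta$ is non-type-3 or $E$ is not the $\pm$-image of the active extender, or $E^\Tt_\beta=F(M^\Tt_\beta)$ and subsequent $\Tt$-vs extenders if $E$ is the $\pm$-image of $F(M^\Tt_\beta)$, or we first insert a transition point ($E^\Tt_\beta=\emptyset$, $E^\Tt_{\beta+1}$ an extender above $\OR((M^\Tt_\beta)^\pm)$) when $\lh(E)$ requires accessing the $\udash 1$-unsquashed structure. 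Lemma \ref{lem:conversion_exists} applied to the extended $\Tt$ verifies that the resulting pair has the required conversion relationship, with any extra $(\emptyset,\ldots,\emptyset)$ tail of $\Uu$ produced by the unravelling step being exactly the padding that vanishes when passing to $\Uu'$.

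The main obstacle is the limit case $\lh(\Uu')=\zeta+1$ with $\zeta$ a limit; here the argument runs in parallel with Cases 1 and 2 of the proof of Lemma \ref{lem:conversions_cover_J}. By induction we have $\Tt\rest\zeta$ and $\Uu\rest\zeta$ corresponding to $\Uu'\rest\zeta$, and we must identify a $\Tt\rest\zeta$-cofinal branch $b'$ corresponding to $b=[0,\zeta)_{\Uu'}$. First one establishes the analogue of Claim \ref{clm:ev_stable_J}: there exists $\alpha<^\Uu\zeta$ such that for every later $E^\Uu_\xi\neq\emptyset$ used along $b$ there is $\chi\in[\xi+1,\zeta)_\Uu$ with $i^\Uu_{\xi+1,\chi}(\lambda(E^\Uu_\xi))\leq\crit(i^\Uu_{\chi\zeta})$, the proof being by the usual argument that otherwise a descending sequence of images is produced, contradicting the wellfoundedness of $M^\Uu_\zeta$ (which holds since $\Uu=\conv(\Tt\rest\zeta)$ and wellfoundedness is transported by Lemma \ref{lem:conversion_exists}). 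Then one splits into the analogues of Case \ref{case:not_cof_many_transitions_J} and its complement, fixes a suitable tail index $\xi_0$, and defines $b'_0$ as the $<\chi$-closure of either the successor points $\{\eta+2\}$ or the witnesses $\{\chi_\eta\}$, iterating finitely many times to produce a genuinely $\Tt\rest\zeta$-cofinal branch $b'$. The concluding verification, the analogue of Claim \ref{clm:unrvl(Tt')_at_limit_J}, shows that $\Tt\rest\zeta\conc b'$ has wellfounded last model, that $\unrvl(\Tt\rest\zeta\conc b')$ exists, and that applying $\conv$ gives $\Uu\rest(\zeta+1)\conc(\emptyset,\ldots,\emptyset)$ of the correct finite padding length. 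The only genuine difference from the J-case bookkeeping is that MS-special ordinals are flagged by $\udeg=0$ at non-premouse models and arise from transition points rather than from reverse-e-dropdown entries; but the combinatorial skeleton of finitely nested decompositions and finitely iterated stabilization is identical, so the argument transfers with only notational change.
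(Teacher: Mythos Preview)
Your proposal is correct and follows essentially the same approach as the paper's own proof sketch: induction on length, with the successor step a direct inversion of Lemma \ref{lem:conversion_exists} (which the paper leaves implicit) and the limit step handled by a stabilization claim plus finitely-iterated branch construction transferred from Lemma \ref{lem:conversions_cover_J}. The only cosmetic difference is that the paper formulates its stabilization claim specifically for transition points $\xi$, tracking $\delta=\lgcd(M^\Tt_\xi)$, rather than for all non-empty $E^\Uu_\xi$ with $\lambda(E^\Uu_\xi)$ as you do; this is harmless, since non-transition-point extenders already correspond directly between $\Tt$ and $\Uu$ and hence need no separate stabilization argument.
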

\begin{proof}[Proof sketch]
The proof is very much like that of Lemma \ref{lem:conversions_cover_J},
and anyway is straightforward.
So we just give a sketch, and the reader should
refer to \ref{lem:conversions_cover_J} for more detail.

We ignore 
$\Uu'$ itself and just directly
discuss $\Uu$.
We proceed by induction on 
$\lh(\Uu)$.
The induction is an easy consequence of Lemma \ref{lem:conversion_exists}
except for the case that $\lh(\Uu)=\lambda+1$ with limit $\lambda$, so consider 
this
assuming that the lemma holds for trees of length ${\leq\lambda}$.
In particular, we have corresponding trees $\Tt\rest\lambda$
and $\Uu\rest\lambda$.

\begin{clm*}
 There is $\alpha<^\Uu\lambda$
 such that for all transition points $\xi$ of $\Tt\rest\lambda$ 
 with $\xi+1\in(\alpha,\lambda)_\Uu$,
 letting $\delta=\lgcd(M^\Tt_\xi)$,
  there is $\chi\in[\xi+1,\lambda)_\Uu$
 such that $i^\Uu_{\xi+1,\chi}(\delta)\leq\crit(i^\Uu_{\chi\lambda})$.
\end{clm*}
\begin{proof}
If not, then select a sequence $\left<(\xi_n,\delta_n)\right>_{n<\om}$
of witnessing pairs $(\xi,\delta)$ with $\xi_n<\xi_{n+1}$. Then just note
that 
since 
$\delta_n\leq i^{*\Uu}_{\xi_n+1}(\crit(E^\Uu_{\xi_n}))$,
we get 
\[ i^\Uu_{\xi_n+1,\lambda}(\delta_n)>i^\Uu_{\xi_{n+1}+1,\lambda}(\delta_{
n+1})\]
for each $n<\om$, so $M^\Uu_\lambda$ is illfounded, a contradiction.
\end{proof}

Now the more complex case is when
there are cofinally many $\eta<^\Uu\lambda$
which are transition points, so consider this case.
Fix $\xi_0<^\Uu\lambda$ with 
$(\xi_0,\lambda)_\Uu\inter\dropset^\Uu_{\deg}=\emptyset$.
For transition points $\eta$ with $\eta+1\in(\xi_0,\lambda)_\Uu$,
let $\chi_\eta$ be the least $\chi$ such that
$i^\Uu_{\eta+1,\chi}(\delta)\leq\crit(i^\Uu_{\chi\lambda})$
and $E^\Uu_\gamma\neq\emptyset$,
where $\gamma+1=\successor^\Uu(\chi,\lambda)$.
For non-transition points $\eta$ such that $\eta+1\in(\xi_0,\lambda)_\Uu$ and 
$E^\Uu_\eta\neq\emptyset$,
let $\chi_\eta=\eta+1$.

Let $\chi$ be least such that either $\chi=\lambda$
or $\xi_0$ is $\Tt$-special and letting $\delta=\lgcd(M^\Tt_{\xi_0})$,
we have $i^\Uu_{\xi_0\chi}(\delta)\leq\crit(i^\Uu_{\chi\lambda})$
and $E^\Uu_\gamma\neq\emptyset$
where $\gamma+1=\successor^\Uu(\chi,\lambda)$.
Let
$b'_0=[0,\xi_0]_\Tt\cup b'$ where $b'$ is the ${<\chi}$-closure
of
\[ \chi\inter\{\chi_\eta\bigm|\eta+1\in(\xi_0,\lambda)_\Uu\text{ and 
}E^\Uu_\eta\neq\emptyset\}.\]

One now shows that $b'_0$ is a branch of $\Tt\rest\lambda$
with $b'_0\cut(\xi_0+1)\inter\dropset^\Tt_{\deg}=\emptyset$.

Suppose $\chi<\lambda$, so $\xi_0$ is $\Tt$-special. Then much as before,
there is $\chi'<^\Uu\chi$
such that $\alpha$ is $\Tt$-vs for each $\alpha\in[\chi',\chi)$,
and $b'_0\cup\{\chi'\}$ is a branch of $\Tt\rest\lambda$,
and $(\xi_0,\chi']_\Tt\inter\dropset_\Tt=\emptyset$.
So letting $\ell_\alpha$ be the $\ell<\om$ 
such that $\unrvl(\Tt\rest(\alpha+1))$
has length $\alpha+1+\ell$, note that $\ell_{\xi_0}=\ell_{\chi'}>\ell_\chi$.
Now set $\xi_1=\chi$. Given $\xi_n$, define $b_n'$
from $\xi_n$ like $b_0'$ was defined from $\xi_0$.
Then we reach some $n<\om$
with $b'_n$ cofinal in $\lambda$.
Define $\Tt'=\Tt\rest\lambda\conc b_n'$.

One can now show that $\Tt'$ has wellfounded models,
$\unrvl(\Tt')$ exists and
\[ \conv(\unrvl(\Tt'))=\Uu\conc(\emptyset,\ldots,\emptyset), \]
which gives what we need.

This completes the sketch of the proof of the lemma.
\end{proof}

\begin{dfn}
 Let $(M,k,m)$ be suitable.
 Let $\Tt$ be a $\udash k$-maximal tree on $M$.
 Note there is a unique nicely padded tree $\Tt'$
 on $M$ which is equivalent to $\Tt$; write $\mathrm{pad}(\Tt)=\Tt'$.
 Say that $\Tt$ is \emph{unravelled} or \emph{everywhere unravelable}
 iff $\Tt'$ is.
 For a padded $m$-maximal tree $\Uu$ on $M^\pm$,
 let $\mathrm{unpad}(\Uu)$ be the
 tree given by removing all padding from $\Uu$.
 We extend the $\conv$ function as follows:
For $\Tt$ unravelled everywhere unravelable $\udash k$-maximal
(without padding in $\Tt$),
define
\[ \conv(\Tt)=\mathrm{unpad}(\conv(\mathrm{pad}(\Tt))).\]
(There is no ambiguity here, because if 
$\Tt=\mathrm{pad}(\Tt)$, i.e. $\mathrm{pad}(\Tt)$ contains
no padding, then $\Uu=\conv(\Tt)$ as defined earlier,
then $\Uu$ contains no padding, so $\mathrm{unpad}(\Uu)=\Uu$.)
Note here that the 
padding, 
depadding, and 
conversion 
processes preserve 
tree length modulo 
$+\om$; in fact,
\begin{enumerate}[label=--]
\item $\lh(\Tt)\leq\lh(\mathrm{pad}(\Tt))<\lh(\Tt)+\om$,
\item $\lh(\conv(\mathrm{pad}(\Tt)))=\lh(\mathrm{pad}(\Tt))$,
and 
\item $\lh(\conv(\Tt))\leq\lh(\conv(\mathrm{pad}
(\Tt)))<\lh(\conv(\Tt))+\om=\lh(\Tt)+\om$.\qedhere
\end{enumerate}
\end{dfn}

\begin{rem}\label{rem:trees_via_strategies}
Let $(M,k,m)$ be suitable
and $\eta\in\OR$.
Let  $\Sigma$ be a $(\udash k,\eta+\om)$-iteration strategy
for $M$ and let $\Tt$ be via $\Sigma$, of successor length.
Then note that $\Tt$ is arbitrarily finitely extendible
(that is, every putative $\udash k$-maximal tree $\Ss$
such that $\Tt\ins\Ss$ has wellfounded models).
It follows that $\Tt$ is everywhere unravelable.
Also, if $\Gamma$ is an $(m,\eta+\om)$-iteration
strategy and $\Uu$ is via $\Gamma$, then $\Uu$ is $M$-$\udash$wellfounded.
(If $\Uu$ has length $\alpha+1$ and $M^\Uu_\alpha$
is active type 3, then $(M^{+\Uu}_\alpha)^\passive\ins 
M^{\Uu'}_{\alpha+1}$,
where $\Uu'=\Uu\conc F(M^\Uu_\alpha)$.)

Now suppose that $\cof(\eta)>\om$ and let $\Sigma$ be an $(m,\eta+1)$-iteration 
strategy
 for $M$. Then $\Sigma$ extends trivially to an $(m,\eta+\om)$-iteration 
strategy for $M$.
 Likewise for $(\udash m,\eta+1)$- and $(\udash m,\eta+\om)$-strategies.
 (Any illfoundedness would reflect down into some tree
 via $\Sigma$ of length $<\eta$.)
\end{rem}

\begin{tm}\label{tm:convert_normal_strategies}
 Let $(M,k,m)$ be suitable and $\eta\in\OR$.
 Then $M$ is $(\udash k,\eta+\om)$-iterable
 iff $M^\pm$ is $(m,\eta+\om)$-iterable.
 Moreover, there is a bijection
 \[ \Sigma\mapsto\conv(\Sigma) \]
 from the $(\udash k,\eta+\om)$-iteration strategies
 $\Sigma$ for $M$ to the $(m,\eta+\om)$-iteration strategies $\conv(\Sigma)$
 for $M^\pm$ such that for each unravelled (and everywhere
 unravelable; see Remark \ref{rem:trees_via_strategies}) $\udash k$-maximal
 tree $\Tt$ on $M$, we have
 \[ \Tt\text{ is via }\Sigma\iff\conv(\Tt)\text{ 
is via }\conv(\Sigma),\]
and therefore the properties
described in Lemma \ref{lem:conversion_exists}
holds.
\end{tm}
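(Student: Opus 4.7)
The plan is to define $\conv(\Sigma)$ by recursion on tree length, leveraging Lemmas \ref{lem:conversion_exists} and \ref{lem:conversion_covers} to transport branch choices between the two kinds of trees. An iteration strategy is determined by its behavior at limit stages (successor-stage moves are made by player I), so it suffices to specify the branch $\Gamma(\Uu')$ for each limit-length tree $\Uu'$ via the putative strategy $\Gamma=\conv(\Sigma)$. Throughout, Remark \ref{rem:trees_via_strategies} guarantees that any tree played via a full strategy is everywhere unravelable, so the conversion lemmas apply.

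Given a $(\udash k,\eta+\om)$-iteration strategy $\Sigma$ for $M$, I define $\Gamma=\conv(\Sigma)$ as follows. Let $\Uu'$ be an $m$-maximal tree on $M^\pm$ via $\Gamma$ of limit length $\alpha<\eta+\om$. By Lemma \ref{lem:conversion_covers} there is a unique pair $(\Tt,\Uu)$ with $(M,k,\Tt,m)$ suitable, $\Tt$ unravelled and everywhere unravelable, $\Uu=\conv(\Tt)$, and $\Uu'=\mathrm{unpad}(\Uu)$; since padding is inserted only in finite bursts and $\alpha$ is a limit, $\lh(\Tt)=\alpha$. By induction $\Tt$ is via $\Sigma$, so $b=\Sigma(\Tt)$ is a $\Tt$-cofinal branch with wellfounded direct limit $M^\Tt_b$. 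Using clause (\ref{item:conversion_limits}) of Definition \ref{dfn:conversion}, read off from $b$ a $\Uu$-cofinal (hence $\Uu'$-cofinal) branch $c$, and set $\Gamma(\Uu')=c$. Applying Lemma \ref{lem:conversion_exists}(2) to the extended trees $\Tt\conc b$ and $\Uu\conc c$ of successor length, we obtain $(M^\Tt_b)^\pm=M^\Uu_c=M^{\Uu'}_c$, so $M^{\Uu'}_c$ is wellfounded, as required. The inverse map $\conv^{-1}$ is defined symmetrically: given a $\conv^{-1}(\Gamma)$-generated unravelled $\udash k$-maximal tree $\Tt$ on $M$ of limit length, let $\Uu'=\mathrm{unpad}(\conv(\mathrm{pad}(\Tt)))$; by induction $\Uu'$ is via $\Gamma$, so $c=\Gamma(\Uu')$ is defined, and the corresponding $\Tt$-cofinal branch $b$ is recovered via the uniqueness in Lemma \ref{lem:conversion_covers}. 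Wellfoundedness of $M^\Tt_b$ again follows from Lemma \ref{lem:conversion_exists}(2) together with that of $M^{\Uu'}_c$.

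That $\Sigma\mapsto\conv(\Sigma)$ and $\Gamma\mapsto\conv^{-1}(\Gamma)$ are mutually inverse bijections, and that $\Tt$ is via $\Sigma$ iff $\conv(\Tt)$ is via $\conv(\Sigma)$, now follows from the uniqueness clauses of Lemmas \ref{lem:conversion_exists} and \ref{lem:conversion_covers} by induction on tree length. The main obstacle, and the reason the theorem is stated with $\eta+\om$ rather than $\eta+1$, is the finite-padding discrepancy recorded at the end of the preceding definition: the conversion process may insert finitely many pads at each transition point, so the overall tree length may shift by up to $\om$. The $\om$-buffer at the top absorbs this shift, ensuring that every tree arising on either side remains inside the domain of the relevant strategy. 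Agreement at successor stages is forced by the $m$-maximality and $\udash k$-maximality rules together with the definitions of nicely padded and unravelled trees; these routine verifications we leave to the reader.
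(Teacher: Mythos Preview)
Your proposal is correct and is precisely the argument the paper has in mind: the paper gives no explicit proof of this theorem, treating it as an immediate corollary of Lemmas~\ref{lem:conversion_exists} and~\ref{lem:conversion_covers} together with Remark~\ref{rem:trees_via_strategies}, and your sketch supplies exactly those details.

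One small point of care: when you write ``Applying Lemma~\ref{lem:conversion_exists}(2) to the extended trees $\Tt\conc b$ and $\Uu\conc c$,'' note that $\Tt\conc b$ need not be unravelled (the limit model $M^\Tt_b$ may be $\Tt$-special), so Lemma~\ref{lem:conversion_exists} does not apply directly to it. The fix is the obvious one and is implicit in what you wrote about the $\omega$-buffer: since $\Sigma$ is an $(\eta+\omega)$-strategy, $\unrvl(\Tt\conc b)$ exists and is via $\Sigma$; then $\conv(\unrvl(\Tt\conc b))$ is $\Uu\conc c$ followed by finitely many pads, so $M^\Uu_c$ coincides with the last model of that conversion and is wellfounded by Lemma~\ref{lem:conversion_exists}(2). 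With this adjustment your argument goes through as written.
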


So for example, if 
 $\Tt$ has successor length then so does $\Uu=\conv(\Tt)$,
 and $M^\Tt_\infty=M^\Uu_\infty$,
 and if $[0,\infty]_\Tt$ does not drop, then neither
does $[0,\infty]_\Uu$, and
\[ i^\Tt_{0\infty}\rest(M^\pm)^\sq=i^\Uu_{0\infty}.\]

\begin{rem}
Since the proof above functions at a tree-by-tree level,
it can easily be adapted to natural
 kinds of partial strategies; for example,
 strategies which act on trees based on $M|\delta$
 for some $M$-cardinal $\delta<\rho_0(M^\pm)$,
 or trees which use only extenders $E\in\es_+^N$
 such that $\nu_E$ is an $N$-cardinal, etc,
 as these properties are preserved appropriately
 by the conversion processes. Beyond such preservation,
 we just
 need to know that all the $\udash k$-maximal
 trees are everywhere unravelable, and all the
 $m$-maximal trees are $M$-$\udash$wellfounded.
\end{rem}

\begin{dfn}
Let $(M,k,m)$ be suitable.
The iteration game $\Gg_{\mathrm{opt}}(M^\pm,m,\lambda,\eta)^*$
is defined in \cite{iter_for_stacks} (\emph{opt} stands for 
\emph{optimal}).\footnote{The game
builds a putative $m$-maximal stack. The subscript
\emph{opt} means that player I may not make artificial drops,
and the asterisk (introduced in \cite{coremore}) means
 that if in some round $\gamma<\lambda$,
 a tree $\Tt_\gamma$ is produced of length $\eta$
 (with wellfounded well-defined models) then the entire game
 stops and player II has won.}
 The game $\Gg_{\mathrm{opt}}(M,\udash k,\lambda,\eta)^*$
 is completely analogous, but with $\udash$fine structure.
 The game $\Gg^{\unrvl}_{\mathrm{opt}}(M,\udash k,\lambda,\eta)^*$
 makes the restriction that player I may only end rounds
 with unravelled trees. We define via these games
 the corresponding iteration strategies and iterability notions
 (\emph{optimally-$(m,\lambda,\eta)^*$-iterable},
 \emph{unravelled-optimally-$(\udash k,\lambda,\eta)^*$-iterable},
 etc).
\end{dfn}

\begin{tm}
 Let $(M,k,m)$ be suitable and $\eta,\lambda\in\OR$
 with $\eta$ a limit ordinal. 
 Then:
 \begin{enumerate}
 \item $M$ is unravelled-optimally-$(\udash k,\lambda,\eta)^*$-iterable
 iff $M^\pm$ is optimally-$(m,\lambda,\eta)^*$-iterable, and\footnote{Actually
 in this case, the asterisk makes no difference, as $\eta$ is a limit ordinal.}
 \item if $\cof(\eta)>\om$ then $M$ is unravelled-optimally-$(\udash 
k,\lambda,\eta+1)^*$-iterable
 iff $M^\pm$ is optimally-$(m,\lambda,\eta+1)^*$-iterable.
 \end{enumerate}
 Moreover, there are bijections $\Sigma\mapsto\conv(\Sigma)$
 between the sets classes of iteration strategies,
 whose action in each round (that is, for each normal tree in a stack)
 by the conversion of 
Theorem \ref{tm:convert_normal_strategies}.
\end{tm}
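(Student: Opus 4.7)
The plan is to reduce to the normal-tree case of Theorem \ref{tm:convert_normal_strategies} by processing each round of the stack separately. Given an unravelled-optimally-$(\udash k,\lambda,\eta)^*$-strategy $\Sigma$ for $M$, I would define $\conv(\Sigma)$ by induction on rounds: at the start of round $\gamma<\lambda$, maintain as inductive data a suitable triple $(N_\gamma,\udash k_\gamma,m_\gamma)$, a matched pair of base models $N_\gamma$ and $N_\gamma^\pm$, and iteration maps agreeing modulo squashing. The restriction of $\Sigma$ to round $\gamma$'s position is an unravelled-$(\udash k_\gamma,\eta)$-strategy on $N_\gamma$, so applying the bijection of Theorem \ref{tm:convert_normal_strategies} yields an $(m_\gamma,\eta)$-strategy on $N_\gamma^\pm$, which becomes the action of $\conv(\Sigma)$ in round $\gamma$. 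The restriction that player I only end rounds with unravelled trees is precisely what the game $\Gg_{\mathrm{opt}}^{\unrvl}$ provides and what Theorem \ref{tm:convert_normal_strategies} requires; and the $*$-condition (game halts if some round reaches length $\eta$) transfers because $\lh(\conv(\Tt))=\lh(\Tt)$ when $\Tt$ is unravelled, by Lemma \ref{lem:conversion_exists}.

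The inductive hypothesis propagates cleanly across round transitions. If round $\gamma$ ends at successor length with last $\udash k$-tree model $P=M^{\Tt_\gamma}_\infty$ and $\udeg^{\Tt_\gamma}_\infty=\udash k'$, then Lemma \ref{lem:conversion_exists} gives $M^{\conv(\Tt_\gamma)}_\infty=P^\pm$ and $\deg^{\conv(\Tt_\gamma)}_\infty=m'=m^P(\udash k')$, with iteration maps agreeing modulo squashing; hence $(P,\udash k',m')$ is suitable and the next round begins correctly. At a limit-length round, player II picks a cofinal branch on both sides, and Definition \ref{dfn:conversion}(\ref{item:conversion_limits}) together with Lemma \ref{lem:conversion_exists}(\ref{item:comparability}, \ref{item:iterated_comparability}) ensures these branches correspond bijectively and yield direct limits related by $\pm$. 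For the inverse direction, given an optimally-$(m,\lambda,\eta)^*$-strategy $\Gamma$ for $M^\pm$, I use Lemma \ref{lem:conversion_covers} round by round: each $m$-maximal tree played by $\Gamma$ is uniquely of the form $\conv(\Tt_\gamma)$ (after stripping padding) for an unravelled everywhere-unravelable $\udash k_\gamma$-maximal tree $\Tt_\gamma$, and $\Sigma$ is forced to produce $\Tt_\gamma$. Remark \ref{rem:trees_via_strategies} ensures the requisite everywhere-unravelability of any tree via a strategy. The two constructions are manifestly mutually inverse, yielding the bijection $\Sigma\mapsto\conv(\Sigma)$.

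Clause (2) reduces to clause (1) via Remark \ref{rem:trees_via_strategies}: when $\cof(\eta)>\om$, any $(m,\eta+1)$-strategy extends trivially to an $(m,\eta+\om)$-strategy, as any illfoundedness at length $\eta$ would reflect into some shorter tree via the strategy; the analogous fact holds on the $\udash k$ side. Thus the equivalence at $\eta+\om$ in clause (1) transports back to $\eta+1$, and the bijection is preserved by this extension. The main obstacle will be the limit-round bookkeeping: one must verify that at a limit stage of a round (with the $*$-condition not yet triggered), the cofinal branch selected by $\Sigma$ on the $\udash k$ side converts, under Definition \ref{dfn:conversion}(\ref{item:conversion_limits}), to the branch selected by $\Gamma$ on the $m$ side, and that the direct limits match via $\pm$ so the next round can commence. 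This is carried by Lemma \ref{lem:conversion_exists}, but must be tracked together with the drop and degree structure across rounds. Beyond this, everything is routine because the conversion is defined locally and respects each round independently.
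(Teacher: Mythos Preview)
Your approach is essentially the paper's: process the stack round by round, applying Theorem \ref{tm:convert_normal_strategies} at each round while maintaining the invariant that $(M^{\vec{\Tt}}_\infty,k',m')$ is suitable with $(M^{\vec{\Tt}}_\infty)^\pm=M^{\vec{\Uu}}_\infty$ and iteration maps agreeing on squashed models.

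One point of confusion to clear up: you identify the ``main obstacle'' as the limit stages \emph{within} a round (where player II picks a cofinal branch in the tree), but that is precisely what Theorem \ref{tm:convert_normal_strategies} already handles in full; there is nothing new to verify there. The genuinely new content for the stack theorem is the propagation through a \emph{limit round} $\gamma$ of the stack itself, i.e.\ when $\gamma$ is a limit ordinal and one must check that the direct limits $M^{\vec{\Tt}}_\infty$ and $M^{\vec{\Uu}}_\infty$ taken along the stack satisfy $(M^{\vec{\Tt}}_\infty)^\pm=M^{\vec{\Uu}}_\infty$ with the right degrees. This is exactly what the paper means by ``carry through limit stages,'' and it follows from the iteration-map agreement you do record in your inductive data, so your argument still goes through---but your final paragraph misplaces where the work lies. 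Also, each round always ends at successor tree length (or the game halts via the $*$-condition), so ``the next round can commence'' is never an issue at a tree-limit stage.
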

\begin{proof}
The iteration games produce putative stacks
$\left<\Tt_\alpha\right>_{\alpha<\gamma}$
and $\left<\Uu_\alpha\right>_{\alpha<\gamma'}$ respectively.
For  round $0$, producing $\Tt_0,\Uu_0$,
we just use Theorem \ref{tm:convert_normal_strategies} (and the conversion 
process used in its proof).
Given $\vec{\Tt}=\left<\Tt_\alpha\right>_{\alpha<\xi}$
and $\vec{\Uu}=\left<\Uu_\alpha\right>_{\alpha<\xi}$,
with $M^{\vec{\Tt}}_\infty$ and $M^{\vec{\Uu}}_\infty$
well-defined and 
wellfounded, we will have that $(M^{\vec{\Tt}}_\infty,k',m')$
is suitable, where $k'=\udeg^{\vec{\Tt}}_\infty$
and $m'=\deg^{\vec{\Uu}}_\infty$,
and that $(M^{\vec{\Tt}}_\infty)^\pm=M^{\vec{\Uu}}_\infty$.
Thus, for round $\xi$, we can again use Theorem 
\ref{tm:convert_normal_strategies}.
The agreement between models, degrees and iteration maps
given by Theorem \ref{tm:convert_normal_strategies} and Lemma 
\ref{lem:conversion_exists}
ensures that the inductive hypotheses  carry through limit stages.
\end{proof}

\begin{rem}
 Like for normal trees, this also adapts easily to partial strategies
 for stacks.
\end{rem}

\bibliographystyle{plain}
\bibliography{stacks_from_normal_iterability_fork_arxiv}

\end{document}